\g@addto@macro \normalsize {%
 \setlength\abovedisplayskip{7pt}%
 \setlength\belowdisplayskip{6pt}%
}
\newtheorem{thm}[equation]{Theorem}
\newtheorem{lem}[equation]{Lemma}
\newtheorem{cor}[equation]{Corollary}
\newtheorem{prop}[equation]{Proposition}
\newtheorem{prob}[equation]{Problem}
\theoremstyle{remark}
\newtheorem{rem}[equation]{Remark}
\newtheorem{obs}[equation]{Observation}
\newtheorem{defn}[equation]{Definition}
\numberwithin{equation}{section}
\newcommand{\gb}{\beta}
\newcommand{\ga}{\alpha}
\newcommand{\gL}{\Lambda}
\newcommand{\gD}{\Delta}
\newcommand{\eps}{\varepsilon}
\newcommand{\fa}{{\mathfrak a}}             
\newcommand{\fb}{{\mathfrak b}}
\newcommand{\fg}{{\mathfrak g}}
\newcommand{\fh}{{\mathfrak h}}
\newcommand{\fk}{{\mathfrak k}}
\newcommand{\fl}{{\mathfrak l}}
\newcommand{\fm}{{\mathfrak m}}
\newcommand{\fn}{{\mathfrak n}}
\newcommand{\fp}{{\mathfrak p}}
\newcommand{\fs}{{\mathfrak s}}
\newcommand{\fy}{{\mathfrak y}}
\newcommand{\f}{\mathfrak}
\newcommand{\R}{\mathbb{R}}          
\newcommand{\C}{\mathbb{C}}          
\newcommand{\Z}{\mathbb{Z}}
\newcommand{\ad}{\mathrm{ad}}
\newcommand{\Ad}{\mathrm{Ad}}
\newcommand{\Cal}{\mathcal}
\newcommand{\Hom}{\operatorname{Hom}}
\renewcommand{\Im}{\mathrm{Im}}
\newcommand{\Ind}{\mathrm{Ind}}
\newcommand{\IP}[2]{\langle#1 , #2\rangle}     
\newcommand{\spn}{\text{span}}
\newcommand{\Tr}{\text{Tr}}
\newcommand{\Pol}{\mathrm{Pol}}
\newcommand{\poly}{\mathrm{poly}}
\newcommand{\To}{\longrightarrow}
\newcommand{\Diff}{\mathrm{Diff}}
\newcommand{\Irr}{\mathrm{Irr}}
\newcommand{\triv}{\mathrm{triv}}
\newcommand{\fin}{\mathrm{fin}}
\newcommand{\sym}{\mathrm{sym}}
\newcommand{\D}{\Cal{D}}
\newcommand{\wH}{\widetilde{H}}
\newcommand{\wT}{\widetilde{\Cal{T}}}
\newcommand{\wlambda}{\widetilde{\lambda}}
\newcommand{\wk}{\widetilde{k}}
\newcommand{\wm}{\widetilde{m}}
\newcommand{\diag}{\mathrm{diag}}
\newcommand{\id}{\mathrm{id}}
\newcommand{\pr}{\mathrm{pr}}
\newcommand{\proj}{\mathrm{proj}}
\newcommand{\Proj}{\mathrm{Proj}}
\newcommand{\sgn}{\mathrm{sgn}}
\newcommand{\std}{\mathrm{std}}
\newcommand{\RP}{\mathbb{R}\mathbb{P}}
\newcommand{\Ker}{\mathrm{Ker}}
\newcommand{\Cartan}{\mathrm{Cartan}}
\newcommand{\PRV}{\mathrm{PRV}}
\newcommand{\Exterior}{\mathchoice{{\textstyle\bigwedge}}%
    {{\bigwedge}}%
    {{\textstyle\wedge}}%
    {{\scriptstyle\wedge}}}    
\providecommand*{\donothing}[1]{}
\begin{document}

\baselineskip=16pt
\tabulinesep=1.2mm


\title[]{On the intertwining differential operators between vector bundles 
over the real projective space of dimension two}

\dedicatory{Dedicated to the memory of Joseph A.\ Wolf}

\author{Toshihisa Kubo}
\author{Bent {\O}rsted}

\address{Faculty of Economics, 
Ryukoku University,
67 Tsukamoto-cho, Fukakusa, Fushimi-ku, Kyoto 612-8577, Japan}
\email{toskubo@econ.ryukoku.ac.jp}

\address{Department of Mathematics, 
Aarhus University,
Ny Munkegade 118 DK-8000 Aarhus C Denmark}
\email{orsted@imf.au.dk}
\subjclass[2020]{
22E46, 
17B10} 
\keywords{intertwining differential operator,
generalized Verma module,
Cartan component,
PRV component,
BGG resolution,
unitary highest weight module}


\date{\today}

\maketitle


\begin{abstract} 
The main objective of this paper is twofold. One is to classify and construct 
$SL(3,\R)$-intertwining differential operators between vector bundles
over the real projective space $\RP^2$. 
It turns out that two kinds of operators appear. 
We call them Cartan operators and PRV operators.
The second objective is then to study the representations realized 
on the kernel of those operators both in the smooth and holomorphic setting.
A key machinery is the BGG resolution. 
In particular, by exploiting some results 
of Davidson--Enright--Stanke and Enright--Joseph, 
the irreducible unitary highest 
weight modules of $SU(1,2)$ at the (first) reduction points are classified by 
the image of Cartan operators and kernel of PRV operators.
\end{abstract}


\section{Introduction}\label{sec:intro}


In this paper we study from a modern point of view the very classical topic of real projective geometry in dimension
two;  this was already studied in Greek mathematics, and in the Renaissance 
it became part of the theory of
perspective in art. Here there is (from the point of view of Sophus Lie) the 
8-dimensional group $G=SL(3,\R)$ of projective transformations
acting on $G/P$ with (in modern language) $P$ a parabolic subgroup. As such it falls into the class of parabolic geometries, and
also the kind of structure that Joseph A.\ Wolf studied in depth, namely flag manifolds. Our aim in this paper is to illustrate
in detail several aspects of this geometry in finding some invariants associated with homogeneous vector bundles
on $G/P$; these are $G$-intertwining differential operators for the natural action on sections, and we shall see how they fit
into the very general theory of parabolic geometry. This involves by a natural duality of Verma modules and the celebrated BGG resolution, and we will see natural constructions of representations of $G$ in spaces of sections.
It turns out that it is related to the classification of some irreducible unitary highest weight representations of another real form $G_u=SU(1,2)$ of $G_\C=SL(3,\C)$.
The method uses some aspects of the theory of the Rankin--Cohen operators, known from the theory of automorphic forms;
this allows quite explicit formulas for the differential operators in question; as such we feel that the explicit case of 2-dimensional
projective geometry deserves attention.
 In the following, we shall describe our work in more detail.

\subsection{Classification and construction}
First, we briefly introduce some notation;
more detailed notation will be introduced in Section \ref{sec:SL3}.
Let $G$ be a real reductive group and $P=MAN_+$ a Langlands 
decomposition of a parabolic subgroup $P$.
We write $\Irr(M)_\fin$ for the set of  equivalence classes of 
irreducible finite-dimensional representations of $M$.
Likewise, let $\Irr(A)$ denote the set of characters of $A$. 
Then, for the outer tensor product $\varpi\boxtimes \nu \boxtimes \triv$ 
of $\varpi \in \Irr(M)_\fin$, $\nu \in \Irr(A)$, and the trivial representation
$\triv$ of $N_+$, we put
\begin{equation*}
I(\varpi,\nu) := \Ind_{P}^G(\varpi\boxtimes \nu \boxtimes \triv)
\end{equation*}
for an unnormalized parabolically induced representation of $G$.

Let $\Diff_G(I(\sigma,\lambda), I(\varpi,\nu))$ denote the space of $G$-intertwining
differential operators $\D \colon I(\sigma,\lambda) \to I(\varpi,\nu)$. In what follows,
we often refer to them as IDOs.

In this paper, we consider the following problem for the case
$(G, P)=(SL(3,\R), P_{1,2})$, where $P_{1,2}$ is a maximal parabolic subgroup 
of $G$ with partition $3=1+2$.

 \begin{prob}\label{prob:A}
Do the following.
\begin{enumerate}
\item[\emph{(A1)}] Classify $(\sigma, \lambda), (\varpi, \nu) \in \Irr(M)_\fin\times \Irr(A)$ 
such that
\begin{equation*}
\Diff_G(I(\sigma,\lambda), I(\varpi,\nu))\neq\{0\}.
\end{equation*}

\item[\emph{(A2)}] 
For $(\sigma, \lambda), (\varpi, \nu) \in \Irr(M)_\fin\times \Irr(A)$,
determine the dimension 
\begin{equation*}
\dim \Diff_G(I(\sigma,\lambda), I(\varpi,\nu)).
\end{equation*}

\item[\emph{(A3)}] 
For $(\sigma, \lambda), (\varpi, \nu) \in \Irr(M)_\fin\times \Irr(A)$,
construct generators 
\begin{equation*}
\D \in \Diff_G(I(\sigma,\lambda), I(\varpi,\nu)).
\end{equation*}
\end{enumerate}
\end{prob}

For recent developments on Problem \ref{prob:A}, see, for instance,
the introduction of \cite{KuOr25a} and references therein. 
The most outstanding recent advance in Problem \ref{prob:A} is the invention 
of the so-called F-method \cite{KP1, KP2}. 
This is a powerful machinery proposed by T.\ Kobayashi,
which reformulates the problems (A1), (A2), and (A3) to solving a certain system
of partial differential equations. As such, the F-method allows one to 
achieve the construction and classification of IDOs, simultaneously. 
The method plays a pivotal role especially in taking care of the differential symmetry breaking operators (DSBOs), 
which are $G'$-IDOs for $G' \subset G$ from a $G$-representation
to a $G'$-representation. For instance, in \cite{Kubo24+}, the first author tackles Problem \ref{prob:A} for DSBOs $\D$  by using the F-method.

It is no harm to apply the machinery to the present situation.
In fact, it is used in our earlier work \cite{KuOr25a} to solve Problem \ref{prob:A} for the case $(G, P)=(SL(n,\R), P_{1,n-1})$ with $\dim \sigma =1$ (line bundle case).
However, in this paper,  we take a more classical approach to 
the problem, which stems from 
the duality between the space of IDOs and 
that of homomorphisms between generalized Verma modules,
as it appears simpler in the present case. Combining the duality 
with the idea of ``standard map'' of generalized Verma modules, we shall solve
(A1) and (A2) in Section \ref{sec:classification}.
It turned out that,
for any $m, \ell \in \Z_{\geq 0}$, there exists a non-zero IDO $\D$  
with order $|\ell-m|$. See Theorems \ref{thm:IDO1} and \ref{thm:IDO2}
for more details.

To solve (A3), one needs to compute $P$-invariants of generalized Verma modules.
In doing so, there are, in principle, two steps to consider.
The first is to compute $MA$-invariants of the tensor product of three
irreducible finite-dimensional representations of $MA$.
(On this matter, see \cite[Thms.\ 4.3 and 5.5]{DES90} and \cite[Thm.\ 3]{EG11} 
as relevant works). 
The second is the evaluation of the $\fn_+$-invariance of the $MA$-invariants,
where $\fn_+$ is the complexified Lie algebra of $N_+$.
This step can be thought of as computing 
a \emph{conformal weight} in conformal geometry.

The first step for the present work can be reduced to computing the projection 
of the tensor product of two irreducible finite-dimensional representations 
of $SL(2,\R)$ onto its irreducible components. Such projections are known as 
Gordan's transvectants, or equivalently, Rankin--Cohen brackets
(see, for instance, \cite{Clerc25, ElGradechi06} for a relationship between them).
For this purpose, one can make a use of the explicit formula of
Molchanov \cite{Molchanov15}. We, nonetheless, compute the $MA$-invariance
in a more elementary way (see Section \ref{sec:proof2}).

The evaluation of the $\fn_+$-invariance in the second step 
is in general not easy, especially 
for a vector bundle case.
In the present work, we succeeded to 
avoid such a computation by showing that any IDO in the present case
comes from the standard map between generalized Verma modules.
This is done in Section \ref{sec:Step4}.

It is noteworthy that only the so-called Cartan component and PRV
component among all $SL(2,\R)$-irreducible components contribute to 
the construction of IDOs. This verifies some insight made by Kable in 
\cite[Intro.]{Kable18A} on his \emph{automatic conformal invariance} result 
for second order systems on vector bundles. In this paper, we refer to
the IDOs $\D$ coming from the Cartan component and PRV component as
the \emph{Cartan operator} and \emph{PRV operator}, respectively.

\vskip 0.1in

\subsection{BGG resolution}
The kernel $\Ker \D$ and image $\Im \D$ 
of IDOs $\D$ are naturally representations of $G$.
These representations are closely related to the composition structure of the 
parabolically induced representations. Indeed, in \cite{KuOr25a},
we exploited the composition structure studied by Howe--Lee \cite{HL99} and 
van Dijk--Molchanov \cite{vDM99} to investigate the representations on
$\Ker\D$ and $\Im \D$ for the case $(G, P)=(SL(n,\R), P_{1,n-1})$ 
with $\dim \sigma =1$. (See Remark \ref{rem:Ktype} for more details.)

As opposed to the line bundle case, the composition structure for a vector 
bundle case seems not fully investigated. Then, in the present work, we study
$\Ker \D$ and $\Im \D$ via the BGG resolution. A BGG resolution, or more 
generally, BGG sequence (\cite{CD01, CSS01}) is a powerful tool to construct
IDOs $\D$, which are also referred to as 
\emph{equivariant differential operators},
\emph{invariant differential operators}, or \emph{conformally covariant differential operators} (in conformal geometry), among others.
Eastwood--Gover \cite{EG11} carefully studied BGG resolutions 
for the case $(G, P)=(SL(n,\R), P_{1,n-1})$. With the aid of their results,
we shall show that the Cartan operators and PRV operators are the first and second 
BGG operators, respectively. This in particular shows that every irreducible 
finite-dimensional representation of $G$ is realized in the kernel $\Ker \D$
with a suitable parity condition.

It is remarked that, as $M$ for $P=MAN_+$ 
is $M\simeq SL^{\pm}(2,\R)$,
the representations $\sigma\in\Irr(M)_{\fin}$ involve some parity.
BGG sequences are well studied in parabolic geometry; nonetheless,
such a condition seems not paid attention
as it is automatically 
fixed by the irreducible finite-dimensional representation realized in $\Ker \D$. 
The  IDOs $\D$ are in fact ``rigid'' with respect to the parity of $\sigma \in \Irr(M)_\fin$
(cf.\ \cite[Lem.\ 2.17]{KuOr19}).
So, if the BGG resolution is utilized to determine the representations 
in $\Ker \D$ and $\Im \D$, then
one should be careful for the parity of representations of $M$.
We shall return and discuss this point in more detail in Section \ref{sec:main6}
(especially after Corollary \ref{cor:BGGres}).

\subsection{Irreducible unitary highest weight modules}
As there exists a duality between the space of IDOs $\D$ and that of
homomorphisms between
generalized Verma modules, the IDOs $\D$ are determined by the pair of 
complex Lie algebras $(\fg, \fp)$, where $\fg$ and $\fp$ are the complexified
Lie algebras of $G=SL(3,\R)$ and 
the parabolic subgroup $P$. So, the operators $\D$ 
are intertwining operators also for another real form $G_u=SU(1,2)$.

More precisely speaking,
let $K$ be a maximal compact subgroup of $G_u$. Then, via the Borel embedding
$G_u/K \hookrightarrow G_\C/P_\C$, where $P_\C$ is the complexification
of a parabolic subgroup $P$, 
IDOs $\D$ are regarded as $G_u$-intertwining differential operators on
the space of holomorphic sections over $G_u/K$. In this context, 
IDOs $\D$ are more often referred to as \emph{covariant differential operators}
(see, for instance, \cite{HJ82, HJ83, Jakobsen85}).

In the above setting with $G_\C/\bar{P}_\C$, where $\bar{P}_\C$ is the 
complexification of the opposite parabolic subgroup $\bar{P}$ to $P$,
Davidson--Enright--Stanke \cite{DES90, DES91} uses 
IDOs $\D$ to classify unitary highest weight modules at the so-called reduction points.
In their study, the aforementioned PRV components play a central role. 
(For a connection between unitary highest 
weight modules and PRV components, see also Enright--Joseph \cite{EJ90},
Enright--Wallach \cite{EW97},
Pand{\v z}i{\' c}--Prli{\' c}--Sou{\v c}ek--Tu{\v c}ek \cite{PPST23+},
and Pand{\v z}i{\' c}--Prli{\' c}--Savin--Sou{\v c}ek--Tu{\v c}ek \cite{PPSST24+},
among others.)

There is only one reduction point (first reduction point) for $G_u=SU(1,2)$.
Then, at the end of the paper, we shall classify those unitary representations
by the kernel of PRV operators and image of Cartan operators
via the BGG resolution. This is done in Theorem \ref{thm:UHW}.

\subsection{Organization of the paper}
Now we describe the rest of the paper. There are seven sections including this 
introduction. In Section \ref{sec:Prelim}, we recall the duality between
the space of IDOs $\D$ and that of homomorphisms between
generalized Verma modules. We also 
review quickly the notion of standard map between generalized Verma modules.
Then, in Section \ref{sec:SL3}, we specialize the framework considered 
in Section \ref{sec:Prelim} to $(G,P)= (SL(3,\R), P_{1,2})$. In particular, we 
fix some notation for $\Irr(M)_{\fin}$ and $\Irr(A)$, carefully.

Sections \ref{sec:classification} and \ref{sec:construction} are devoted to
the classification of parameters ((A1) and (A2))
and explicit construction of IDOs $\D$ ((A3)), respectively. 
The main results, namely, Theorems \ref{thm:IDO1} and \ref{thm:IDO2}
solve Problem \ref{prob:A} for the present case. In addition,
we also study the algebraic counterpart of Problem \ref{prob:A} 
for generalized Verma modules. These are achieved in 
Theorems \ref{thm:gP1} and \ref{thm:gP2}.

The representations in the kernel $\Ker \D$ and image $\Im \D$ are studied in 
Section \ref{sec:BGG}. 
This is done via the BGG resolution in Corollary \ref{cor:BGGres}.
In this section we carefully investigate the effect of the parity conditions
to $\Ker \D$ and image $\Im \D$.

The last section, Section \ref{sec:SU}, is for the classification of  the irreducible 
unitary highest weight modules of $SU(1,2)$  at the (first) reduction points.
We first quickly review a general theory of the Borel embedding 
$G_u/K \hookrightarrow G_\C/\bar{P}_\C$. Then we give a classification
of such representations in terms of the kernel of PRV operators 
and image of Cartan operators via the BGG resolution. 
This is achieved in Theorem \ref{thm:UHW}.

\section{Preliminaries}\label{sec:Prelim}

The aim of this section is to recall from the literature 
the so-called duality theorem
(Theorem \ref{thm:duality}) and the notion of standard map
between generalized Verma modules.
Many of the material in this section is taken from \cite{KuOr25a}.
In later sections, we shall apply those ideas to the case $(G,P)=(SL(3,\R), P_{1,2})$.

\subsection{Duality theorem}\label{sec:prelim}
Let $G$ be a real reductive Lie group and $P=MAN_+ $ a Langlands decomposition of a parabolic subgroup $P$ of $G$. We denote by $\fg(\R)$ and 
$\fp(\R) = \fm(\R) \oplus \fa(\R) \oplus \fn_+(\R)$ the Lie algebras of $G$ and 
$P=MAN_+$, respectively.

 For a real Lie algebra $\f{y}(\R)$, we write $\f{y}$
and $\Cal{U}(\fy)$ for its complexification and the universal enveloping algebra of 
$\fy$, respectively. 
For instance, $\fg, \fp, \fm, \fa$, and $\fn_+$ are the complexifications of $\fg(\R), \fp(\R), \fm(\R), \fa(\R)$, and $\fn_+(\R)$, 
respectively.

For $\lambda \in \fa^* \simeq \Hom_\R(\fa(\R),\C)$,
we denote by $\C_\lambda$ 
the one-dimensional representation of $A$ defined by 
$a\mapsto a^\lambda:=e^{\lambda(\log a)}$. 
For a finite-dimensional 
representation $(\sigma, V)$ of $M$ and $\lambda \in \fa^*$,
we denote by $\sigma_\lambda$ the outer tensor product representation $\sigma \boxtimes \C_\lambda$. As a representation on $V$, we define $\sigma_\lambda \colon
ma \mapsto a^\lambda\sigma(m)$. By letting $N_+$ act trivially, we regard 
$\sigma_\lambda$ as a representation of $P$. Let $\Cal{V}:=G \times_P V \to G/P$
be the $G$-equivariant vector bundle over the real flag variety $G/P$
 associated with the 
representation $(\sigma_\lambda, V)$ of $P$. We identify the Fr{\' e}chet space 
$C^\infty(G/P, \Cal{V})$ of smooth sections with 
\begin{equation*}
C^\infty(G, V)^P:=\{f \in C^\infty(G,V) : 
f(gp) = \sigma_\lambda^{-1}(p)f(g)
\;\;
\text{for $p \in P$}\},
\end{equation*}
the space of $P$-invariant smooth functions on $G$.
Then, via the left regular representation $L$ of $G$ on $C^\infty(G)$,
we realize the parabolically induced representation 
$\pi_{(\sigma, \lambda)} = \Ind_{P}^G(\sigma_\lambda)$ on $C^\infty(G/P, \Cal{V})$.
We denote by $R$ the right regular representation of $G$ on $C^\infty(G)$.

Similarly, for a finite-dimensional 
representation $(\eta_\nu, W)$ of $MA$, we define
the induced representation $\pi_{(\eta, \nu)}=\Ind_P^G(\eta_\nu)$ on the 
space $C^\infty(G/P, \Cal{W})$ of smooth sections for a $G$-equivariant vector bundle 
$\Cal{W}:=G\times_PW \to G/P$.
We write $\Diff_G(\Cal{V},\Cal{W})$ for the space of intertwining differential operators
$\D \colon C^\infty(G/P, \Cal{V}) \to C^\infty(G/P, \Cal{W})$.

Let $\mathfrak{g}(\R)=\mathfrak{n}_-(\R) \oplus \mathfrak{m}(\R) 
\oplus \mathfrak{a}(\R) \oplus \mathfrak{n}_+(\R)$ be the 
Gelfand--Naimark decomposition of $\mathfrak{g}(\R)$,
and write $N_- = \exp(\fn_-(\R))$. We identify $N_-$ with the 
open Bruhat cell $N_-P$ of $G/P$ via the embedding 
$\iota\colon N_- \hookrightarrow G/P$, $\bar{n} \mapsto \bar{n}P$.
Via the restriction of the vector bundle $\Cal{V} \to G/P$ to the open Bruhat cell
$N_-\stackrel{\iota}{\hookrightarrow} G/P$,
we regard $C^\infty(G/P,\mathcal{V})$ as a subspace of 
$C^\infty(N_-) \otimes V$.

We view intertwining differential operators 
$\D \colon
C^\infty(G/P,\mathcal{V})
\to C^\infty(G/P,\mathcal{W})$
as differential operators
$\D' \colon C^\infty(N_-) \otimes V
\to C^\infty(N_-) \otimes W$ such that
the restriction $\D'\vert_{C^\infty(G/P,\mathcal{V})}$
to $C^\infty(G/P,\mathcal{V})$ is a map
$\D'\vert_{C^\infty(G/P,\mathcal{V})}\colon
C^\infty(G/P,\mathcal{V})
\to C^\infty(G/P,\mathcal{W})$ (see \eqref{eqn:21} below).
\begin{equation}\label{eqn:21}
\xymatrix{
C^\infty(N_-) \otimes V 
\ar[r]^{\mathcal{D}' } 
& C^\infty(N_-) \otimes W\\ 
C^\infty(G/P,\mathcal{V}) 
\;\; \ar[r]_{\stackrel{\phantom{a}}{\hspace{20pt}\mathcal{D}=\mathcal{D}' \vert_{\small{C^\infty(G/P,\mathcal{V})}}} }
 \ar@{^{(}->}[u]^{\iota^*}
& \;\; C^\infty(G/P,\mathcal{W}) \ar@{^{(}->}[u]_{\iota^*}
}
\end{equation}

\noindent
In particular, we regard $\Diff_G(\Cal{V},\Cal{W})$ as 
\begin{align}
\Diff_G(\Cal{V},\Cal{W}) 
&\subset \Diff_\C(C^\infty(N_-)\otimes V, C^\infty(N_-)\otimes W) \label{eqn:DN}\\[3pt]
&\simeq \Diff_\C(C^\infty(N_-)) \otimes \Hom_\C(V, W) \nonumber\\[3pt]
&\simeq \Diff_\C(C^\infty(N_-)) \otimes V^\vee \otimes  W,\label{eqn:DN3}
\end{align}
where $\Diff_\C(C^\infty(N_-))$ denotes the space of differential operators on $C^\infty(N_-)$
and $V^\vee := \Hom_\C(V,\C)$.

For a finite-dimensional representation $(\sigma_\lambda,V)$ of $MA$, 
we write $((\sigma_\lambda)^\vee, V^\vee)$ for
the contragredient representation of $(\sigma_\lambda,V)$. By letting $\fn_+$ act 
on $V^\vee$ trivially, we regard the infinitesimal representation 
$d\sigma^\vee \boxtimes \C_{-\lambda}$ of $(\sigma_\lambda)^\vee$ as a $\fp$-module. The induced module
\begin{equation*}
M_\fp(V^\vee) := \Cal{U}(\fg)\otimes_{\Cal{U}(\fp)}V^\vee
\end{equation*}
is called a generalized Verma module, provided that $V$ is a simple $\fp$-module.
Via the diagonal action of $P$ on $M_\fp(V^\vee)$, we regard
$M_\fp(V^\vee)$ as a $(\fg, P)$-module.
For the proof of the following theorem, see, for instance,
\cite{CS90, KP1, KR00}.

\begin{thm}[duality theorem]\label{thm:duality}
There is a natural linear isomorphism
\begin{equation}\label{eqn:duality1}
\EuScript{D}
\colon
\operatorname{Hom}_{P}(W^\vee,M_{\fp}(V^\vee))
\stackrel{\sim}{\To} 
\operatorname{Diff}_{G}(\mathcal V, \mathcal W),
\end{equation}
where,
for $\varphi \in \Hom_P(W^\vee, M_\fp(V^\vee))$ and
$F \in C^\infty(G/P,\Cal{V})\simeq C^\infty(G,V)^P$,
the element $\EuScript{D}(\varphi)F \in C^\infty(G/P,\Cal{W})\simeq C^\infty(G,W)^P$
is given by 
\begin{equation}\label{eqn:HD}
\IP{\EuScript{D}(\varphi)F}{w^\vee} =
\sum_{j}\IP{dR(u_j)F}{v_j^\vee}
\;\; \text{for $w^\vee \in W^\vee$},
\end{equation}
where $\varphi(w^\vee)=\sum_j u_j\otimes v_j^\vee \in 
M_\fp(V^\vee)$.
\end{thm}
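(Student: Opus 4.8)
The plan is to construct the map $\EuScript{D}$ explicitly by the formula \eqref{eqn:HD}, then verify successively that it is well-defined, $G$-intertwining, differential, and bijective. First I would check that \eqref{eqn:HD} does not depend on the choice of representative $\varphi(w^\vee)=\sum_j u_j\otimes v_j^\vee$: since $M_\fp(V^\vee)=\Cal{U}(\fg)\otimes_{\Cal{U}(\fp)}V^\vee$, the only relations come from moving elements of $\Cal{U}(\fp)$ across the tensor sign, and these are absorbed because $F\in C^\infty(G,V)^P$ transforms by $\sigma_\lambda^{-1}$ under $P$; infinitesimally $dR(X)F$ for $X\in\fp$ reproduces the contragredient action on $V^\vee$, so the pairing is unchanged. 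One must also check that the right-hand side of \eqref{eqn:HD}, as a function of $g\in G$, lies in $C^\infty(G,W)^P$, i.e.\ transforms by $(\eta_\nu)^{-1}$; this uses the $P$-equivariance hypothesis on $\varphi$, namely $\varphi\in\Hom_P(W^\vee,M_\fp(V^\vee))$, together with the fact that $M_\fp(V^\vee)$ carries the diagonal $(\fg,P)$-action and $dR$ is compatible with it.

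Next I would verify the two structural properties. That $\EuScript{D}(\varphi)$ is $G$-intertwining for the left regular action is essentially formal: $dR$ commutes with $dL$ (hence with $L$), and nothing in \eqref{eqn:HD} involves $L$, so $\EuScript{D}(\varphi)\circ\pi_{(\sigma,\lambda)}(g)=\pi_{(\eta,\nu)}(g)\circ\EuScript{D}(\varphi)$ follows from the left-invariance of the construction. That $\EuScript{D}(\varphi)$ is a differential operator is seen by passing to the open Bruhat cell $N_-\hookrightarrow G/P$ as in \eqref{eqn:DN}--\eqref{eqn:DN3}: writing each $u_j\in\Cal{U}(\fg)$, modulo $\Cal{U}(\fg)\fp$, as a sum of elements of $\Cal{U}(\fn_-)$ times $V^\vee$, the operator $dR(u_j)$ restricted to $C^\infty(N_-)\otimes V$ becomes a constant-coefficient (in the $N_-$-coordinates, up to the trivialization) differential operator with values in $\Hom_\C(V,W)$, so $\EuScript{D}(\varphi)$ lands in $\Diff_\C(C^\infty(N_-))\otimes V^\vee\otimes W$ and, being defined on all of $C^\infty(G/P,\Cal V)$, restricts as in the diagram \eqref{eqn:21} — hence it is a genuine element of $\Diff_G(\Cal V,\Cal W)$.

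Finally I would prove bijectivity. Linearity of $\varphi\mapsto\EuScript{D}(\varphi)$ is immediate from \eqref{eqn:HD}. For injectivity, suppose $\EuScript{D}(\varphi)=0$; evaluating at the identity coset and using that $F$ ranges over enough sections (for instance using a bump-function argument on $N_-$ to realize arbitrary finite jets at $\bar{n}=e$), one recovers $\sum_j u_j\otimes v_j^\vee=0$ in $M_\fp(V^\vee)$ for every $w^\vee$, so $\varphi=0$. For surjectivity, given $\D\in\Diff_G(\Cal V,\Cal W)$, transport it via \eqref{eqn:DN3} to an element of $\Diff_\C(C^\infty(N_-))\otimes V^\vee\otimes W$, evaluate at $e\in N_-$ to obtain a linear functional on the jet space, and use the identification of the jet space at the base point with $\Cal{U}(\fn_-)\otimes V^\vee$ (hence with $M_\fp(V^\vee)$ via the PBW decomposition $\Cal{U}(\fg)=\Cal{U}(\fn_-)\otimes\Cal{U}(\fp)$) to produce a candidate $\varphi$; the $G$-equivariance of $\D$ then forces $\varphi$ to be $P$-equivariant, and one checks $\EuScript{D}(\varphi)=\D$ by construction. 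The main obstacle is this last surjectivity step: one must carefully match the jet-space description of differential operators with the generalized Verma module and show that $G$-equivariance of $\D$ is exactly equivalent to $P$-equivariance of the associated $\varphi$ — the rest being bookkeeping. Since a full proof is in \cite{CS90, KP1, KR00}, I would cite those for the details and only indicate the dictionary above.
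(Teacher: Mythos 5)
The paper provides no proof of Theorem \ref{thm:duality} and simply cites \cite{CS90, KP1, KR00}, exactly as you do at the end; your four-step outline (well-definedness of \eqref{eqn:HD} modulo the $\Cal{U}(\fp)$-relations absorbed by $P$-invariance of $F$, $G$-intertwining from $[dR,dL]=0$, differentiability via PBW and restriction to the open Bruhat cell, and the jet-space/Verma dictionary for bijectivity) is a correct account of the standard argument found in those references. The only minor imprecision is the parenthetical that $dR(u_j)$ for $u_j\in\Cal{U}(\fn_-)$ has constant coefficients on $N_-$: this holds when $\fn_-$ is abelian (as for $P_{1,2}$ in this paper), but for a general parabolic the coefficients are polynomial in exponential coordinates — the operator is nevertheless differential, which is all the argument requires.
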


\begin{rem}
By the Frobenius reciprocity, the isomorphism \eqref{eqn:duality1} is equivalent to
\begin{equation}\label{eqn:duality2}
\EuScript{D}
\colon
\operatorname{Hom}_{\fg, P}(M_\fp(W^\vee),M_{\fp}(V^\vee))
\stackrel{\sim}{\To} 
\operatorname{Diff}_{G}(\mathcal V, \mathcal W).
\end{equation}

\end{rem}

\subsection{Standard map}\label{sec:Std1}
To introduce the notion of standard map between
generalized Verma modules, we first reparametrize
them in terms of infinitesimal characters.

Let $\fg$ be a complex simple Lie algebra. Fix a Cartan subalgebra $\fh$ and 
write $\gD\equiv \gD(\fg, \fh)$ for the set of roots of $\fg$ with respect to $\fh$.
Choose a positive system $\gD^+$ and denote by $\Pi$ the set of simple roots of 
$\gD$. Let $\fb$ denote the Borel subalgebra of $\fg$ associated with $\gD^+$, namely,
$\fb = \fh \oplus \bigoplus_{\ga \in \gD^+} \fg_\ga$, where $\fg_\ga$ is the root space
for $\ga \in \gD^+$.

Let $\IP{\cdot}{\cdot}$ denote the inner product on $\fh^*$ induced from 
a non-degenerate symmetric bilinear form 
on $\fg$. For $\ga \in \gD$, we write $\ga^\vee = 2\ga/\IP{\ga}{\ga}$. Also, write 
$s_\ga$ for the root reflection with respect to $\ga \in \gD$. As usual, we let 
$\rho=(1/2)\sum_{\ga \in \gD^+}\ga$ be half the sum of the positive roots.

Let $\fp \supset \fb$ be a standard parabolic subalgebra of $\fg$. Write 
$\fp =\fl \oplus \fn_+$ for the Levi decomposition of $\fp$. We let 
$\Pi(\fl) = \{\ga \in \Pi : \fg_\ga \subset \fl\}$.

Now we put
\begin{equation}\label{eqn:Pell}
\mathbf{P}^+_{\fl}:=\{\mu \in \fh^* : \IP{\mu}{\ga^\vee} \in 1+\Z_{\geq 0}
\;\;
\text{for all $\ga \in \Pi(\fl)$}\}.
\end{equation}
For $\mu \in \mathbf{P}^+_\fl$, let $E(\mu-\rho)$ be the finite-dimensional 
simple $\Cal{U}(\fl)$-module with highest weight $\mu-\rho$. 
By letting $\fn_+$ act trivially, we regard $E(\mu-\rho)$ as a $\Cal{U}(\fp)$-module.
Then the induced module
\begin{equation}\label{eqn:Verma2}
N_\fp(\mu):=\Cal{U}(\fg)\otimes_{\Cal{U}(\fp)} E(\mu-\rho)
\end{equation}
is the generalized Verma module with highest weight $\mu-\rho$.
If $\fp = \fb$, then $N(\mu) \equiv N_{\fb}(\mu)$ is the (ordinary) Verma module with highest weight $\mu-\rho$.  

\vskip 0.1in


Let $\mu, \eta \in \mathbf{P}^+_\fl$.
It follows from a theorem by BGG--Verma 
(Theorem \ref{thm:link} below)
that if $\Hom_{\fg}(N_\fp(\mu), N_\fp(\eta)) \neq \{0\}$, then 
$\Hom_{\fg}(N(\mu), N(\eta)) \neq \{0\}$
(cf.\ \cite[Thm.\ 7.6.23]{Dix96} and \cite[Thm.\ 5.1]{Hum08}).

Conversely,
suppose that there exists a non-zero
$\fg$-homomorphism $\varphi\colon N(\mu)\to N(\eta)$.
Let $\pr_\mu\colon N(\mu) \to N_\fp(\mu)$ denote 
the canonical projection map. Then we have 
$\varphi(\Ker(\pr_\mu)) \subset \Ker(\pr_\eta)$
(\cite[Prop.\ 3.1]{Lepowsky77}). Thus, the map $\varphi$ induces a 
$\fg$-homomorphism $\varphi_{\std} \colon N_\fp(\mu)\to N_\fp(\eta)$ such that
the following diagram commutes.
\begin{equation*}
\xymatrix@R-=0.8pc@C-=0.5cm{
N(\mu) \ar[rr]^\varphi \ar[dd]_{\pr_\mu} && N(\eta \ar[dd]^{\pr_\eta})\\
&\circlearrowleft&\\
N_\fp(\mu)\ar[rr]^{\varphi_{\std}}& & N_\fp(\eta)
}
\end{equation*}

The map $\varphi_{\std}$ is called the \emph{standard map} from $N_\fp(\mu)$ to 
$N_\fp(\eta)$ associated with $\varphi$ (\cite[p.\ 501]{Lepowsky77}). As $\dim \Hom_{\fg}(N(\mu), N(\eta)) \leq 1$, 
the standard map $\varphi_{\std}$ is unique up to scalar. 
It is known that the standard map $\varphi_{\std}$ could be zero, and even if 
$\varphi_{\std}=0$, there could be another non-zero map from $N_\fp(\mu)$ to 
$N_\fp(\eta)$. Any homomorphism that is not standard is called 
a \emph{non-standard map}.

It is known when the standard map $\varphi_\std$ is zero.
To state the criterion, we introduce the notion of a link between two weights.

\begin{defn}[{Bernstein--Gelfand--Gelfand}]\label{def:link}
Let $\mu, \eta \in \fh^*$ and $\beta_1,\ldots, \beta_t \in \gD^+$. Set
$\eta_0 := \eta$ and $\eta_i := s_{\beta_i}\cdots s_{\beta_1}\eta$ for $1\leq i \leq t$.
We say that the sequence $(\beta_1,\ldots, \beta_t)$ links $\eta$ to $\mu$ if 
it satisfies the following two conditions.
\begin{enumerate}[(1)]
\item $\eta_t = \mu$.
\vskip 0.05in
\item $\IP{\eta_{i-1}}{\beta^\vee_{i}} \in \Z_{\geq 0}$ 
for all $i \in \{1,\ldots, t\}$.
\end{enumerate} 
\end{defn}

Let $L(\mu)$ denote the unique irreducible quotient of the Verma module
$N(\mu)$.

\begin{thm}[{BGG--Verma}]\label{thm:link}
The following conditions on $(\mu, \eta) \in (\fh^*)^2$ are equivalent.

\begin{enumerate}
\item[\emph{(i)}] $\Hom_\fg(N(\mu), N(\eta)) \neq \{0\}$.
\item[\emph{(ii)}] $L(\mu)$ is a composition factor of $N(\eta)$.
\item[\emph{(iii)}] There exists a sequence $(\beta_1, \ldots, \beta_t)$ with 
$\beta_t \in \gD^+$ that links $\eta$ to $\mu$.
\end{enumerate}
\end{thm}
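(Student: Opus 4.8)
\emph{Overview.} This is the classical Bernstein--Gelfand--Gelfand/Verma theorem, so the plan is to prove the cycle $(\mathrm{i})\Rightarrow(\mathrm{ii})\Rightarrow(\mathrm{iii})\Rightarrow(\mathrm{i})$, following the lines of \cite{Dix96, Hum08}. The two genuinely substantial ingredients are Verma's embedding lemma for non-simple positive roots and the Jantzen sum formula; these I would quote rather than reprove in full.

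\emph{$(\mathrm{i})\Rightarrow(\mathrm{ii})$.} The key preliminary is that every nonzero $\fg$-homomorphism $N(\mu)\to N(\eta)$ is injective: since $\operatorname{gr}\Cal{U}(\fn_-)\simeq S(\fn_-)$ is a domain, $\Cal{U}(\fn_-)$ has no zero divisors, so $N(\eta)\simeq\Cal{U}(\fn_-)$ is torsion-free over $\Cal{U}(\fn_-)$ and any nonzero weight vector of weight $\mu-\rho$ killed by $\fn_+$ generates a submodule isomorphic to $N(\mu)$. Hence $(\mathrm{i})$ is equivalent to the existence of an embedding $N(\mu)\hookrightarrow N(\eta)$; refining a composition series of $N(\mu)\subset N(\eta)$ and recalling that $L(\mu)$ is the head of $N(\mu)$ then gives $(\mathrm{ii})$.

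\emph{$(\mathrm{iii})\Rightarrow(\mathrm{i})$.} By composing embeddings along the links $\eta=\eta_0,\eta_1,\dots,\eta_t=\mu$ of Definition \ref{def:link}, it suffices to treat a single step: if $\beta\in\gD^+$ and $n:=\IP{\eta}{\beta^\vee}\in\Z_{\geq 0}$, then $N(s_\beta\eta)\hookrightarrow N(\eta)$. For $n=0$ this is trivial. For $\beta$ simple and $n>0$, the vector $f_\beta^{\,n}v_\eta$ is nonzero, singular, and of weight $s_\beta\eta-\rho$, which one checks using $\mathfrak{sl}_2$-theory for $\langle e_\beta,f_\beta,\beta^\vee\rangle$ together with $[e_\alpha,f_\beta]=0$ for simple $\alpha\neq\beta$. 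For $\beta$ non-simple one argues by induction on $\Ht(\beta)$: choosing a simple $\gamma$ with $\IP{\beta}{\gamma^\vee}>0$ so that $s_\gamma\beta\in\gD^+$ has strictly smaller height, one assembles a commuting square of embeddings relating $N(\eta)$, $N(s_\gamma\eta)$, $N(s_\beta\eta)$ and $N(s_{s_\gamma\beta}s_\gamma\eta)$, the requisite integrality inequalities being preserved under $s_\gamma$. This last case is the main obstacle in this direction.

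\emph{$(\mathrm{ii})\Rightarrow(\mathrm{iii})$.} This is the strong linkage principle. I would introduce the Jantzen filtration $N(\eta)=N(\eta)^0\supset N(\eta)^1\supset\cdots$ coming from a one-parameter deformation of the Shapovalov form, recall that $N(\eta)^1$ is the unique maximal submodule, and use the Jantzen sum formula
\begin{equation*}
\sum_{i\geq 1}\mathrm{ch}\, N(\eta)^i = \sum_{\beta\in\gD^+,\ \IP{\eta}{\beta^\vee}\in\Z_{>0}} \mathrm{ch}\, N(s_\beta\eta).
\end{equation*}
If $L(\mu)$ is a composition factor of $N(\eta)$ with $\mu\neq\eta$, then $\mu<\eta$ and $L(\mu)$ occurs in $N(\eta)^1$, hence in some $N(s_\beta\eta)$ with $\IP{\eta}{\beta^\vee}\in\Z_{>0}$; since $\Ht(s_\beta\eta-\mu)<\Ht(\eta-\mu)$, induction on $\Ht(\eta-\mu)$ produces a linking sequence from $s_\beta\eta$ to $\mu$, and prepending $\beta$ links $\eta$ to $\mu$. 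Deriving the Jantzen sum formula from the Shapovalov determinant is the technical heart here, and I would cite it from \cite{Dix96, Hum08}. Combining the three implications closes the cycle and proves the theorem.
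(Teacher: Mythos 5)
The paper does not supply a proof of this theorem: it is stated as a citation of the classical BGG--Verma result, with explicit pointers to Dixmier's Theorem~7.6.23 and Humphreys' Theorem~5.1, and those references are the only ``proof'' on record here. Your sketch is therefore not in competition with an argument in the paper; it is a correct condensed account of the standard proof found precisely in those two sources. You have the cycle $(\mathrm{i})\Rightarrow(\mathrm{ii})\Rightarrow(\mathrm{iii})\Rightarrow(\mathrm{i})$ right: the injectivity of nonzero maps between Verma modules via torsion-freeness of $\Cal{U}(\fn_-)$, the reduction of $(\mathrm{iii})\Rightarrow(\mathrm{i})$ to one-step embeddings $N(s_\beta\eta)\hookrightarrow N(\eta)$ with the explicit $\mathfrak{sl}_2$ singular vector $f_\beta^n v_\eta$ in the simple case, and the strong linkage direction $(\mathrm{ii})\Rightarrow(\mathrm{iii})$ by induction on $\Ht(\eta-\mu)$ through the Jantzen sum formula. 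The two places you defer to the literature --- Verma's embedding for non-simple $\beta$ and the derivation of the Jantzen sum formula from the Shapovalov determinant --- are exactly the two nontrivial inputs, and citing them is the same decision the paper makes implicitly by pointing to \cite{Dix96,Hum08}. One small bookkeeping point worth making explicit if you were to write this out: the condition $\IP{\eta_{i-1}}{\beta_i^\vee}\in\Z_{\geq 0}$ in Definition~\ref{def:link} is already in the $\rho$-shifted convention (highest weight of $N(\eta)$ is $\eta-\rho$), so for simple $\beta$ the exponent $n=\IP{\eta}{\beta^\vee}$ satisfies $\IP{\eta-\rho}{\beta^\vee}=n-1$ and the $\mathfrak{sl}_2$ computation $e_\beta f_\beta^n v_\eta=n\bigl((n-1)-n+1\bigr)f_\beta^{n-1}v_\eta=0$ closes up cleanly; you gesture at this but it is worth the one line.
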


The criterion on the vanishing of the standard map $\varphi_{\std}$ is first
studied by Lepowsky (\cite{Lepowsky77}) and then Boe refined Lepowsky's 
criterion (\cite{Boe85}). 
The next theorem is a version of Boe's criterion
\cite[Thm.\ 3.3]{Boe85}.

\begin{thm}\label{thm:Boe}
Let $\mu, \eta \in \fh^*$ and suppose that there exists a non-zero 
$\fg$-homomorphism
$\varphi \in \Hom_{\fg}(N(\mu), N(\eta))$.
Then the following two conditions on $(\mu,\eta)$ are equivalent.
\begin{enumerate}
\item[\emph{(i)}]
The standard map $\varphi_{\std}\colon N_\fp(\mu) \to N_\fp(\eta)$ 
associated with $\varphi$ is non-zero.
\item[\emph{(ii)}]
For all sequences 
$(\beta_1, \ldots, \beta_t)$ linking $\eta$ to $\mu$,
we have $\eta_1 \in \mathbf{P}^+_\fl$.
\end{enumerate}
\end{thm}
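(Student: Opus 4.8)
The plan is to recall Boe's original criterion from \cite[Thm.\ 3.3]{Boe85} and to transcribe it into the present notation. Boe works with a Bruhat-type combinatorial condition on the relative Weyl group $W_\fl\backslash W$; the key translation is that a homomorphism $N(\mu)\to N(\eta)$ exists (Theorem \ref{thm:link}) precisely when $\mu$ and $\eta$ lie in the same dot-orbit and $\mu\leq\eta$ in the Bruhat order on that orbit, and that the standard map $\varphi_\std$ is nonzero if and only if the corresponding Bruhat interval in $W_\fl\backslash W$ is ``full'' in Boe's sense. First I would fix the antidominant representative and record the standard fact that any linking sequence $(\beta_1,\ldots,\beta_t)$ from $\eta$ to $\mu$ corresponds to a saturated chain in the Bruhat order, with $\eta_1=s_{\beta_1}\eta$ the first step down. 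The condition $\eta_1\in\mathbf{P}^+_\fl$ is exactly the statement that this first reflection does not leave the dominant chamber for $\Pi(\fl)$, i.e.\ that the chain still passes through a coset that projects nontrivially to $N_\fp$.

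The substantive content is the following equivalence, which I would extract from Boe: $\varphi_\std\neq 0$ iff for \emph{every} reduced expression realizing $w_\mu$ as a product starting from $w_\eta$ (equivalently, every linking sequence), the intermediate weight $\eta_1$ remains $\fl$-dominant. The ``only if'' direction is the easy one: if some linking sequence has $\eta_1\notin\mathbf{P}^+_\fl$, then $s_{\beta_1}\eta$ is singular or antidominant for a wall of $\Pi(\fl)$, so the composite $N(\mu)\to N(\eta)\to N_\fp(\eta)$ factors through $N_{\fp'}(\eta)$ for a larger parabolic and one checks directly on highest weight vectors that the image of the canonical generator of $N_\fp(\mu)$ lands in $\Ker(\pr_\eta)$; hence $\varphi_\std=0$. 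I would write this out by chasing the commuting diagram defining $\varphi_\std$ together with the explicit description of $\Ker(\pr_\eta)$ as the submodule generated by the root vectors $f_\alpha^{\IP{\eta}{\alpha^\vee}}v_\eta$ for $\alpha\in\Pi(\fl)$.

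For the ``if'' direction one must show that if $\eta_1\in\mathbf{P}^+_\fl$ along every linking sequence, then $\varphi_\std\neq 0$. Here I would not reprove Boe's theorem but invoke it: his criterion is stated in terms of the relative Bruhat order and a ``no interval of relative length $\leq 2$ is degenerate'' condition, and the single-step condition on $\eta_1$ is what that reduces to after one observes (as Boe does) that the obstruction is always detected at the top of the chain. The translation lemma — that Boe's combinatorial hypothesis is equivalent to condition (ii) as stated — is where I expect the only real friction: one has to match the normalization (Boe uses dominant versus antidominant highest weights, and the roles of $\mu$ and $\eta$ may be swapped relative to our convention in \eqref{eqn:Verma2}), and one has to verify that ``$\eta_1\in\mathbf{P}^+_\fl$'' is the correct avatar of ``the first descent in the relative Weyl group stays in the minimal-length-coset-representative regime.'' I would handle this by reducing to rank-one checks: since a linking sequence is built one reflection at a time, it suffices to verify the equivalence when $t=1$ and when $t=2$, the latter being exactly the case Boe singles out, and then induct on $t$ using that $\varphi_\std$ is a composite of standard maps when the chain is saturated. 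The main obstacle, then, is purely bookkeeping: aligning our conventions with Boe's and confirming that the first intermediate weight is the only one that can obstruct standardness.
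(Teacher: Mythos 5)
The paper offers no proof of Theorem \ref{thm:Boe}; it simply states that the result is a version of Boe's criterion \cite[Thm.\ 3.3]{Boe85}, with the original study of when $\varphi_\std$ vanishes attributed to Lepowsky \cite{Lepowsky77}. Your strategy of invoking Boe's theorem and concerning yourself only with the translation into the present notation therefore agrees with the paper's treatment. That said, a few details in your expository scaffolding are imprecise and would need repair before the writeup could stand: (a) a linking sequence in the sense of Definition \ref{def:link} is not a reduced expression and need not correspond to a saturated Bruhat chain, since the $\beta_i$ may be arbitrary positive roots; (b) the ``only if'' argument via factoring through $N_{\fp'}(\eta)$ for a larger parabolic is not the standard mechanism, which is rather (following Lepowsky) that $\varphi_\std = 0$ if and only if the image of the Verma embedding $N(\mu)\hookrightarrow N(\eta)$ lies inside $\Ker(\pr_\eta)=\sum_{\alpha\in\Pi(\fl)}\Im\bigl(N(s_\alpha\eta)\hookrightarrow N(\eta)\bigr)$; and (c) the proposed induction on $t$ via a factorization of $\varphi_\std$ into a composite of standard maps along the chain requires every intermediate $\eta_i$ to lie in $\mathbf{P}^+_\fl$, whereas the theorem's condition concerns $\eta_1$ alone. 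Since you ultimately defer to Boe for the substance, these are cosmetic rather than fatal, but they should be cleaned up or removed.
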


\section{Specialization to $(SL(3,\R), P_{1,2})$}
\label{sec:SL3}

The aim of this short section is to introduce necessary notation for the rest of the 
paper. The expositions of this section are mainly a specialization of
\cite{KuOr25a} to $n=3$.

\subsection{Notation}\label{sec:notation}
Let $G = SL(3,\R)$ with Lie algebra $\fg(\R)=\f{sl}(3,\R)$.
We put
\begin{equation}\label{eqn:Npm}
N_j^+:=E_{1,j+1},
\quad
N_j^-:=E_{j+1,1}
\quad
\text{for $j\in \{1, 2\}$}
\end{equation}
and 
\begin{equation*}
H_0:=\frac{1}{3}(2E_{1,1} - E_{2,2}- E_{3,3})
=\frac{1}{3}\diag(2, -1, -1),
\end{equation*}
where $E_{i,j}$ denote the matrix units. We normalize $H_0$ as
$\wH_0:=\frac{3}{2}H_0$,
namely,
\begin{equation}\label{eqn:H0}
\wH_0=\frac{1}{2}(2E_{1,1} -E_{2,2}-E_{3,3})
=\frac{1}{2}\diag(2, -1, -1).
\end{equation}

Let
\begin{align}
\fn_+(\R) =\Ker(\ad(H_0)-\id) 
&= \Ker(\ad(\wH_0)-\tfrac{3}{2}\id),\label{eqn:nR1}\\[3pt]
\fn_-(\R)=\Ker(\ad(H_0)+\id) 
&=\Ker(\ad(\wH_0)+\tfrac{3}{2}\id).\label{eqn:nR}
\end{align}
Then we have
\begin{equation*}
\fn_{\pm}(\R)=\spn_{\R}\{N_1^{\pm}, N_{2}^{\pm} \}.
\end{equation*}

For $X, Y \in \fg(\R)$, let $\Tr(X,Y)=\text{Trace}(XY)$ denote the trace form of $\fg(\R)$. 
Then $N_i^+$ and $N_j^-$ satisfy $\Tr(N_i^+,N_j^-)=\delta_{i,j}$.
In what follows, we identify  the dual $\fn_-(\R)^\vee$
with $\fn_-(\R)^\vee \simeq \fn_+(\R)$ via the trace form $\Tr(\cdot, \cdot)$.

We put $\fa(\R):= \R \wH_0$ and
\begin{equation}\label{eqn:Lm}
\fm(\R):= \left\{
\begin{pmatrix}
0 & \\
 & X
\end{pmatrix}
:
X \in \f{sl}(2,\R)
\right\}\simeq \f{sl}(2,\R).
\end{equation}
We have $\fm(\R)\oplus \fa(\R) = \Ker(\ad(\wH_0))$ and
the decomposition
$\fg(\R) = \fn_-(\R) \oplus \fm(\R) \oplus \fa(\R) \oplus \fn_+(\R)$
is a Gelfand--Naimark decomposition of $\fg(\R)$.
The subalgebra
$\fp(\R):=\fm(\R) \oplus \fa(\R) \oplus \fn_+(\R)$ is a maximal parabolic 
subalgebra of $\fg(\R)$ and the nilpotent radicals $\fn_{\pm}(\R)$ are abelian.

Let $P=N_G(\fp(\R))$, the normalizer of $\fp(\R)$ in $G$. 
We write $P=MAN_+$ for the Langlands decomposition of $P$ corresponding to 
$\fp(\R)=\fm(\R) \oplus \fa(\R) \oplus \fn_+(\R)$. 
Then $A = \exp(\fa(\R)) = \exp(\R \wH_0)$ and 
$N_+=\exp(\fn_+(\R))$. The group $M$ is given by
\begin{equation*}
M= 
\left\{
\begin{pmatrix}
\det(g)^{-1} &\\
& g\\
\end{pmatrix}
:
g \in SL^{\pm}(2,\R)
\right\}\simeq SL^{\pm}(2,\R).
\end{equation*}

For a closed subgroup $J$ of $G$, we denote by $\Irr(J)$ and $\Irr(J)_{\fin}$
the sets of equivalence classes of irreducible representations of $J$  and 
irreducible finite-dimensional representations of $J$, respectively.

For $\lambda \in \C$, 
we define a one-dimensional representation 
$\C_\lambda=(\chi^\lambda, \C)$ of $A=\exp(\R\wH_0)$ by
\begin{equation}\label{eqn:chi}
\chi^\lambda \colon \exp(t \wH_0) \longmapsto \exp(\lambda t).
\end{equation}
Then $\Irr(A)$ is given by
\begin{equation*}
\Irr(A)=\{\C_\lambda : \lambda \in \C\} \simeq \C.
\end{equation*}

For $\alpha \in \{\pm\}\simeq \Z/2\Z$, 
a one-dimensional representation $\C_\alpha$ of $M$ 
is defined  by
\begin{equation*}
\begin{pmatrix}
\det(g)^{-1} &\\
& g\\
\end{pmatrix}
\longmapsto
\sgn^\alpha(\det(g)),
\end{equation*}
where
\begin{equation*}
\sgn^\alpha(\det(g)) 
= 
\begin{cases}
1 & \text{if $\alpha=+$},\\
\sgn(\det(g)) & \text{if $\alpha=-$}.
\end{cases}
\end{equation*}

To describe $\Irr(M)_{\fin}$,
let $S^m(\C^2)$ denote the space of symmetric tensors on $\C^2$ of homogeneous 
degree $m$. We then write $\Pol^m(\C^2) := S^m((\C^2)^\vee)$,  the space of polynomial functions on $\C^2$ of homogeneous degree $m$.
Since
\begin{equation}\label{eqn:dualsym}
\Pol^m(\C^2) = S^m((\C^2)^\vee)\simeq S^m(\C^2)^\vee, 
\end{equation}
the two spaces $S^m(\C^2)$ and $\Pol^m(\C^2)$ are dual to each other.

Let 
$\sym_2^m$ and $\poly_2^m$ denote 
the irreducible representations of $SL(2,\R)$  on $S^m(\C^2)$ and $\Pol^m(\C^2)$,
respectively. By the representation theory of $\f{sl}(2,\C)$, we have
\begin{align*}
\Irr(SL(2,\R))_\fin 
&= \{(\sym_2^m, S^m(\C^2)): m \in  \Z_{\geq 0}\}\\
&= \{(\poly_2^m, \Pol^m(\C^2)): m \in  \Z_{\geq 0}\}.
\end{align*}
Then $\Irr(M)_\fin = \Irr(SL^{\pm}(2,\R))_\fin$ is given by
\begin{align*}
\Irr(M)_{\fin}
&=
\{\C_\alpha \otimes \varpi: 
(\alpha, \varpi) \in \{\pm\} \times \Irr(SL(2,\R))_{\fin}\}\\
&
\simeq
\Z/2\Z \times \Z_{\geq 0}.
\end{align*}
Since $\Irr(P)_{\fin}\simeq \Irr(M)_{\fin} \times \Irr(A)$, the set $\Irr(P)_\fin$ can be 
parametrized by
\begin{equation*}
\Irr(P)_{\fin}\simeq 
\Z/2\Z \times \Z_{\geq 0} \times \C.
\end{equation*}

For $(\ga, \poly_2^m, \lambda) \in
\Irr(P)_{\fin}$,
we write
\begin{equation}\label{eqn:Ind}
I(\poly_2^m,\lambda)^\ga 
:= 
\Ind_{P}^G\left((\C_\ga \otimes \poly_2^m)\boxtimes \C_\lambda\right)
\end{equation}
for the unnormalized parabolically induced representation 
$\Ind_{P}^G\left((\C_\ga \otimes \poly_2^m)\boxtimes \C_\lambda \right)$ 
of $G$.

Likewise, for $(\ga, \sym_2^m, \lambda) \in \Irr(P)_{\fin}$,
we write 
\begin{equation}\label{eqn:Verma}
M_\fp(\sym_2^m,\lambda)^\ga 
:=\Cal{U}(\fg) \otimes_{\Cal{U}(\fp)}(S^m(\C^2)_\ga\boxtimes \C_\lambda),
\end{equation}
where $S^m(\C^2)_\ga$ stands for the $M$-representation
\begin{equation*}
S^m(\C^2)_\ga:=\C_{\ga} \otimes S^m(\C^2).
\end{equation*}

\vskip 0.1in

By the duality theorem (Theorem \ref{thm:duality}) and \eqref{eqn:dualsym}, we have 
\begin{equation}\label{eqn:dual2}
\Diff_G(I(\poly_2^m, \lambda)^\ga, I(\poly_2^\ell,  \nu)^\beta)
\simeq
\Hom_{\fg, P}(M_\fp(\sym_2^\ell,-\nu)^\gb, M_\fp(\sym_2^m, -\lambda)^\ga).
\end{equation}
Via the equivalence \eqref{eqn:dual2},
we shall classify and construct both intertwining differential operators 
\begin{equation*}
\D \in 
\Diff_G(I(\poly_2^m, \lambda)^\ga, I(\poly_2^\ell,  \nu)^\beta)
\end{equation*}
and $(\fg, P)$-homomorphisms
\begin{equation*}
\varphi \in \Hom_{\fg, P}(M_\fp(\sym_2^\ell,\nu)^\gb, M_\fp(\sym_2^m, \lambda)^\ga)
\end{equation*}
in Sections \ref{sec:classification} and \ref{sec:construction}.

\section{Classification of parameters}
\label{sec:classification}

The aim of this section is to classify the 
parameters $(\ga, \gb; m, \ell; \lambda, \nu) \in 
(\Z/2\Z)^2\times (\Z_{\geq 0})^2 \times \C^2$ such that
\begin{equation*}
\Diff_G(I(\poly_2^m, \lambda)^\ga, I(\poly_2^\ell,  \nu)^\beta)\neq \{0\}.
\end{equation*}
By the duality theorem (Theorem \ref{thm:duality}), 
it is equivalent to classify the parameters such that
\begin{equation*}
 \Hom_{\fg, P}(M_\fp(\sym_2^\ell,-\nu)^\gb, M_\fp(\sym_2^m, -\lambda)^\ga) \neq \{0\}.
\end{equation*}
The classification is given in 
Theorems \ref{thm:IDO1} and \ref{thm:gP1}
for the former and  latter, respectively.

\subsection{Classification of parameters}
\label{sec:main1}
We start by stating the main results. We put
\begin{align*}
\gL_1
&:=
\{
(\ga, \gb; m, \ell; \lambda,\nu) \in (\Z/2\Z)^2 \times (\Z_{\geq 0})^2 \times \C^2: 
\text{\eqref{eqn:Lambda1} holds.}\}\\[3pt]
\gL_2
&:=
\{
(\ga, \gb; m, \ell; \lambda,\nu) \in (\Z/2\Z)^2 \times (\Z_{\geq 0})^2 \times \C^2: 
\text{\eqref{eqn:Lambda2} holds.}\}
\end{align*}
\begin{alignat}{3}
&k:=\ell-m\in \Z_{\geq 0},
\quad
&&\beta-\alpha = k,
\quad
&&(\lambda,\nu) = (\tfrac{1}{2}(2-m-2k), \tfrac{1}{2}(2-m+k)) \label{eqn:Lambda1}\\[3pt]
&k:=m-\ell\in \Z_{\geq 0},
\quad
&&\beta-\alpha = k,
\quad
&&(\lambda,\nu)=(\tfrac{1}{2}(4+m-2k), \tfrac{1}{2}(4+m+k)).\label{eqn:Lambda2}
\end{alignat}
Here, we understand
$\beta-\ga = k$ for $k \in \Z$ as
\begin{equation}\label{eqn:parity}
\beta-\ga = (-1)^{k},
\end{equation}
where $\beta-\ga \in \{\pm\} \equiv \{\pm 1\}$
is defined by
\begin{equation*}
\beta-\ga=
\begin{cases}
+ & \text{if $\ga = \gb$},\\
- & \text{otherwise}.
\end{cases}
\end{equation*}

We set
\begin{equation*}
\gL:=\gL_1  \cup \gL_2.
\end{equation*}
The parameters are classified as follows.

\begin{thm}\label{thm:IDO1}
The following conditions on 
$(\ga, \gb; m,\ell; \lambda,\nu) \in (\Z/2\Z)^2 \times (\Z_{\geq 0})^2 \times \C^2$ 
are equivalent.
\begin{enumerate}
\item[\emph{(i)}]
$\Diff_G(I(\poly_2^m, \lambda)^\ga, I(\poly_2^\ell,  \nu)^\beta)\neq \{0\}$.

\item[\emph{(ii)}]
$\dim \Diff_G(I(\poly_2^m, \lambda)^\ga, I(\poly_2^\ell,  \nu)^\beta) =1$.

\item[\emph{(iii)}]
One of the following two cases holds:
\begin{enumerate}
\item[\emph{(iii-a)}] $(\ga, m, \lambda) = (\gb, \ell, \nu)$;
\item[\emph{(iii-b)}] $(\ga, \gb; m,\ell; \lambda,\nu) \in \gL$.
\end{enumerate}
\end{enumerate}
\end{thm}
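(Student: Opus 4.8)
The plan is to transport everything to the algebraic side via the duality theorem (Theorem \ref{thm:duality}, in the form \eqref{eqn:dual2}) and then reduce to the classical $\f{sl}(3,\C)$ computation of homomorphisms between scalar generalized Verma modules, tracking the $\f{sl}(2)$-type and parity along the way. First I would reparametrize: the $M$-representation $\C_\ga\otimes\sym_2^m$ together with the $A$-character $\C_\lambda$ determines an $\fl$-highest weight, hence a weight $\mu\in\fh^*$ via \eqref{eqn:Verma2}, and similarly $(\gb,\ell,\nu)\mapsto\eta$; the parity $\ga$ enters only through which connected component of $M\simeq SL^\pm(2,\R)$ the representation is inflated from, and I would record the precise dictionary between $(\ga,m,\lambda)$ and the pair $(\mu,\fp\text{-dominance data})$. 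The equivalence (i)$\Leftrightarrow$(ii) will then follow from the general fact $\dim\Hom_\fg(N(\mu),N(\eta))\leq 1$ for Verma modules of a complex semisimple Lie algebra together with the inclusion $\Hom_{\fg,P}(M_\fp(\cdot),M_\fp(\cdot))\hookrightarrow\Hom_\fg(N(\cdot),N(\cdot))$ coming from Theorem \ref{thm:link} (the BGG--Verma result quoted in the excerpt); so the real content is (i)$\Leftrightarrow$(iii).

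For (i)$\Rightarrow$(iii) I would invoke Theorem \ref{thm:link}: a nonzero $\fg$-homomorphism between the ordinary Verma modules $N(\eta)\to N(\mu)$ forces a BGG link $(\beta_1,\dots,\beta_t)$ from $\eta$ to $\mu$, i.e.\ $\mu$ and $\eta$ lie in the same $W$-orbit and are comparable in the Bruhat-type order coming from the link condition (2) of Definition \ref{def:link}. For $\f{sl}(3,\C)$ the Weyl group is $S_3$ with six elements, and the maximal parabolic $\fp=\fp_{1,2}$ has a one-dimensional "$A$-direction", so the relevant $\fp$-dominant weights in a regular (or singular) $W$-orbit are few; enumerating the possible links of length one and two and imposing $\mathbf{P}^+_\fl$-dominance of both endpoints (needed for $M_\fp(\cdot)$ to be defined) cuts the $W$-orbit down to exactly the families recorded in (iii). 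Case (iii-a) is the trivial link ($t=0$, $\mu=\eta$, the identity operator, order $0$); the two nontrivial families $\gL_1$ and $\gL_2$ correspond, on the root-system side, to the two "legs" $s_{\alpha_1}$ and $s_{\alpha_2}$ emanating from a $\fp$-dominant weight — one producing a homomorphism of degree $k=\ell-m$ and the other of degree $k=m-\ell$, which after translating the $\fh^*$-arithmetic back through the dictionary become exactly the affine-linear relations \eqref{eqn:Lambda1} and \eqref{eqn:Lambda2}. The parity constraint $\gb-\ga=(-1)^k$ of \eqref{eqn:parity} is then forced because the reflection $s_{\alpha_i}$ moves the highest weight by an amount whose effect on the $SL^\pm(2)$-component is to multiply by $\sgn(\det)^k$; I would verify this by a direct check on how $w\cdot\mu$ sits relative to the sublattice fixing the component group of $M$.

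For (iii)$\Rightarrow$(i), rather than exhibit the operator by hand here (that is the task of Section \ref{sec:construction}), I would argue existence abstractly: in each of the three cases one checks directly from Definition \ref{def:link} that the stipulated $(\mu,\eta)$ admits a link $(\beta_1,\dots,\beta_t)$ with every intermediate weight $\eta_i\in\mathbf{P}^+_\fl$, so Theorem \ref{thm:Boe} guarantees that the standard map $\varphi_\std\colon N_\fp(\eta)\to N_\fp(\mu)$ is nonzero; then duality \eqref{eqn:dual2} converts this into a nonzero IDO. The main obstacle I anticipate is purely bookkeeping but genuinely delicate: getting the dictionary between the analytic parameters $(\ga;m;\lambda)$ and the $\fh^*$-weight $\mu$ exactly right — including the shift by $\rho$, the normalization \eqref{eqn:H0} of $\wH_0$, and especially the sign/parity contribution of the $SL^\pm(2,\R)$ component group — so that the abstract $W$-orbit computation lands precisely on the formulas \eqref{eqn:Lambda1}--\eqref{eqn:Lambda2} and the parity rule \eqref{eqn:parity} rather than on some cosmetically different affine lines. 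Once that dictionary is pinned down, the remaining enumeration of links in the rank-two root system $A_2$ is short and the equivalence of all three conditions drops out.
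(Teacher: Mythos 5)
Your broad plan — transport to generalized Verma modules via the duality theorem, then use BGG--Verma links, infinitesimal characters, and Boe's criterion — is the same skeleton the paper uses in Section~\ref{sec:Step4}, and the (iii)$\Rightarrow$(i) part of your argument (exhibit a link with all intermediate weights $\fp$-dominant, invoke Theorem~\ref{thm:Boe}) is essentially the paper's proof of Theorem~\ref{thm:Verma}. However, two steps in your proposal have genuine gaps. First, the claimed inclusion $\Hom_{\fg,P}(M_\fp(\cdot),M_\fp(\cdot))\hookrightarrow\Hom_\fg(N(\cdot),N(\cdot))$ does not exist: Theorem~\ref{thm:link} gives only a nonvanishing implication (if the parabolic $\Hom$ is nonzero, so is the ordinary one), not an injective linear map of $\Hom$-spaces. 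Parabolic Verma modules can in general carry a $\Hom$-space of dimension $>1$ even though ordinary Verma $\Hom$'s are always $\leq 1$-dimensional, so the multiplicity-one statement (ii) does not follow from your cited fact. The paper instead gets the bound from the chain of inclusions in Observation~\ref{obs:41}, terminating in $\dim\Hom_{SL(2,\R)}(S^\ell,S^{|\ell-m|}\otimes S^m)\leq 1$ (Clebsch--Gordan), which is a genuinely different and necessary ingredient.

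Second, the parity constraint $\gb-\ga=(-1)^k$ cannot be extracted from an $A_2$ link enumeration: BGG--Verma links and infinitesimal characters live entirely in $\fh^*$ and see only the connected group $M_0\simeq SL(2,\R)$, not the component group $\pi_0(M)\simeq\Z/2\Z$ of $M\simeq SL^{\pm}(2,\R)$. Your remark about how ``$w\cdot\mu$ sits relative to the sublattice fixing the component group of $M$'' does not name a real mechanism. What actually produces the parity is the group-level computation
$(M,\Ad^k_{\fn_-},S^k(\fn_-))\simeq(SL^{\pm}(2,\R),\sgn^k\otimes\sym_2^k,S^k(\C^2))$
carried out in the paper's Step~2 (Proposition~\ref{prop:Step2}): the determinant-sign twist $\sgn^k$ is the source of $(-1)^k$, and it is invisible to $\fh^*$-arithmetic. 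Your framework, which passes to $\Hom_\fg$ between ordinary Verma modules, has forgotten this $P$-equivariance and therefore cannot recover it without adding back the $M$-branching analysis. Both gaps are filled at once by the paper's reduction through $MA$- and $M$-homomorphisms before reaching the Weyl-group computation; without that reduction, (ii) and the parity rule in $\gL$ are not obtainable from what you have assembled.
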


The next theorem is the algebraic counterpart of Theorem \ref{thm:IDO1} via 
the duality theorem.

\begin{thm}\label{thm:gP1}
The following conditions on 
$(\ga, \gb; m,\ell; \lambda,\nu) \in (\Z/2\Z)^2 \times (\Z_{\geq 0})^2 \times \C^2$ 
are equivalent.
\begin{enumerate}
\item[\emph{(i)}]
$\Hom_{\fg,P}(M_\fp(\sym_2^{\ell},-\nu)^{\gb},
M_\fp(\sym_2^m, -\lambda)^\ga) \neq \{0\}$.

\item[\emph{(ii)}]
$\dim \Hom_{\fg,P}(M_\fp(\sym_2^{\ell},-\nu)^{\gb},
M_\fp(\sym_2^m, -\lambda)^\ga) =1$.

\item[\emph{(iii)}]
One of the following two cases holds:
\begin{enumerate}
\item[\emph{(iii-a)}] $(\ga, m, \lambda) = (\gb, \ell, \nu)$;
\item[\emph{(iii-b)}] $(\ga, \gb; m,\ell; \lambda,\nu) \in \gL$.
\end{enumerate}
\end{enumerate}
\end{thm}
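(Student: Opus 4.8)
The plan is to prove Theorem~\ref{thm:gP1} directly, since Theorem~\ref{thm:IDO1} then follows immediately from the duality theorem (Theorem~\ref{thm:duality}), with the dimension statements matching because the Hom-spaces below are always at most one-dimensional. The first step is to reparametrize: transport the generalized Verma modules $M_\fp(\sym_2^m,-\lambda)^\ga$ into the infinitesimal-character normalization $N_\fp(\mu)$ of Section~\ref{sec:Std1}, computing explicitly the highest weight (hence $\mu \in \fh^*$) attached to each triple $(\ga, m, \lambda)$, together with the relevant half-sum $\rho$ and the set $\Pi(\fl)$ for $(\fg,\fp)=(\f{sl}(3,\C), \fp_{1,2})$. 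Here $\fl \simeq \f{gl}(2,\C)$ contributes a single simple root, so $\Pi(\fl)=\{\alpha_2\}$ if $\Pi=\{\alpha_1,\alpha_2\}$, and the condition $\mu \in \mathbf{P}^+_\fl$ records the integrality/dominance of the $SL(2)$-type $\sym_2^m$ in terms of the pairing $\IP{\mu}{\alpha_2^\vee}$.

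Next I would invoke the BGG--Verma theorem (Theorem~\ref{thm:link}) to reduce condition~(i) to the existence of a linking sequence from $\eta$ to $\mu$, and then run through the Weyl group $W \simeq S_3$ to enumerate all pairs $(\mu,\eta)$ of regular (and also singular, i.e. case~(iii-a)) weights in a common orbit for which such a sequence exists. For $S_3$ the Bruhat order is small enough that this is a finite, explicit check: a linking sequence corresponds to a saturated chain in the orbit poset, and the integrality condition~(2) of Definition~\ref{def:link} selects which reflections are admissible at each step. The key bookkeeping is to translate ``$\eta$ is linked to $\mu$ and $\mu \in \mathbf{P}^+_\fl$ and $\eta \in \mathbf{P}^+_\fl$'' into constraints on $(\ga,\gb;m,\ell;\lambda,\nu)$; I expect exactly the relations~\eqref{eqn:Lambda1} and~\eqref{eqn:Lambda2} (and their singular degeneration~(iii-a)) to fall out, where the two families $\gL_1, \gL_2$ correspond to the two ``short'' reflections $s_{\alpha_1}$ and $s_{\alpha_1 + \alpha_2}$ that move $\mathbf{P}^+_\fl$-dominant weights across the $\fp$-wall, differing by which side of the reduction the bundle degree lands on. The parity relation $\beta - \alpha = (-1)^k$ in~\eqref{eqn:parity} is forced because the $M \simeq SL^\pm(2,\R)$-character $\C_\ga$ must be compatible with the $\f{sl}(2)$-content: the one extra generator of $SL^\pm(2,\R)$ beyond $SL(2,\R)$ acts on $S^k(\C^2)$ (the "difference" piece of order $k=|\ell-m|$) by a sign $(-1)^k$, so the $\fg$-homomorphism extends to a $(\fg,P)$-homomorphism precisely under that parity constraint.

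Then I would upgrade from $\Hom_\fg(N(\mu),N(\eta)) \neq \{0\}$ to $\Hom_{\fg,P}(N_\fp(\mu),N_\fp(\eta)) \neq \{0\}$. In the $\f{sl}(3)$ case every such map is the standard map $\varphi_\std$: I would apply Boe's criterion (Theorem~\ref{thm:Boe}), checking that for every linking sequence the intermediate weight $\eta_1$ lies in $\mathbf{P}^+_\fl$; because $W=S_3$ the sequences are essentially unique and short, so there are no obstructions and $\varphi_\std \neq 0$ in every relevant case. The $P$-equivariance (not just $\fg$- and $\fl$-equivariance) is where the parity condition~\eqref{eqn:parity} becomes necessary and sufficient, since $M$ is disconnected; this needs to be argued by tracking the action of a representative of $M/M^0$ on the highest-weight line of $S^k(\C^2)$. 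Finally the dimension bound $\leq 1$ follows from $\dim\Hom_\fg(N(\mu),N(\eta)) \leq 1$ (quoted in Section~\ref{sec:Std1}) together with surjectivity of $\pr_\mu$, giving (i)$\Leftrightarrow$(ii); and (iii)$\Rightarrow$(i) is the explicit construction of the standard map in each case, which I would defer to Section~\ref{sec:construction} or note is already covered by the linking data. The main obstacle is the careful reparametrization and the $S_3$-orbit case analysis with all four discrete and two continuous parameters in play simultaneously — keeping the $\rho$-shift, the two sign parameters $\ga,\gb$, and the two complex parameters $\lambda,\nu$ consistent so that the abstract linking condition collapses exactly to~\eqref{eqn:Lambda1}--\eqref{eqn:Lambda2} rather than to a slightly larger or smaller set; the parity compatibility for disconnected $M$ is the subtle point that a purely Lie-algebraic BGG analysis would miss.
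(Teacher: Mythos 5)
Your route --- transporting to the $N_\fp(\mu)$ normalization, computing infinitesimal characters, invoking BGG--Verma linking and Boe's criterion (Theorems~\ref{thm:link} and~\ref{thm:Boe}), and handling the disconnectedness of $M$ separately --- is essentially the paper's Step~4 argument (Section~\ref{sec:Step4}; Proposition~\ref{prop:Verma} and Theorem~\ref{thm:Verma}). Your identification of the two families $\gL_1,\gL_2$ with the reflections $s_{\eps_1-\eps_2}$ (i.e.\ $s_{\alpha_1}$) and $s_{\eps_1-\eps_3}$ (i.e.\ $s_{\alpha_1+\alpha_2}$) matches the paper's explicit check, and your parity reasoning is in the spirit of Proposition~\ref{prop:Step2} and~\eqref{eqn:ak}. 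The difference is that the paper \emph{precedes} Step~4 with the chain-of-inclusions reduction in Observation~\ref{obs:41} (Propositions~\ref{prop:Step1}--\ref{prop:Step3}), which you effectively skip; that reduction has two consequences, and the second is where your proposal has a real gap.

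Your argument for~(ii), $\dim \Hom_{\fg,P} \le 1$, does not hold as stated. From $\dim \Hom_\fg(N(\mu),N(\eta)) \le 1$ and surjectivity of $\pr_\mu$, one only obtains an injection
\begin{equation*}
\Hom_\fg(N_\fp(\mu),N_\fp(\eta)) \hookrightarrow \Hom_\fg(N(\mu),N_\fp(\eta)),
\end{equation*}
and the target is the $(\mu-\rho)$-weight space of $N_\fp(\eta)^{\fn_+}$, which is \emph{not} a priori bounded by $\dim\Hom_\fg(N(\mu),N(\eta))$: a map $N(\mu)\to N_\fp(\eta)$ need not lift to $N(\mu)\to N(\eta)$ (there is a possible $\operatorname{Ext}^1$ obstruction from $\Ker(\pr_\eta)$). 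The paper establishes the multiplicity bound instead by embedding the whole Hom-space into $\Hom_{SL(2,\R)}(S^\ell(\C^2),\, S^{|\ell-m|}(\C^2)\otimes S^m(\C^2))$ via Observation~\ref{obs:41} --- the $A$-weight isolates a single $k=|\ell-m|$ --- and then uses the multiplicity-freeness of Clebsch--Gordan~\eqref{eqn:CG}. That same chain also supplies the constraint~\eqref{eqn:cond}, which Proposition~\ref{prop:Verma} uses to kill the spurious Weyl-orbit branch~\eqref{eqn:2b}; in your direct $S_3$ enumeration you would instead have to verify by hand that no linking sequence exists in that branch, which is extra (though finite) work. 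So either import the Observation~\ref{obs:41} reduction to get both the bound and the constraint cleanly, or replace the $\pr_\mu$ argument with a genuine multiplicity-one input for Hermitian-symmetric $(\fg,\fp)$.
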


\subsection{Strategy of the proof of Theorems \ref{thm:IDO1} and \ref{thm:gP1}}
\label{sec:proof1}
Our basic strategy to prove Theorems \ref{thm:IDO1} and \ref{thm:gP1} is to reduce 
the space $\Diff_G(I(\poly_2^m, \lambda)^\ga, I(\poly_2^\ell,  \nu)^\beta)$
as much as possible. More precisely, observe that 
as $M_0 \simeq SL(2,\R)$, we have  
\begin{equation}\label{eqn:std}
(M_0, \Ad_{\fn_-}, \fn_-)
\simeq
(SL(2,\R), \std, \C^2),
\end{equation}
whose equivalence is simply given by
\begin{equation*}
z_1 N_1^-+ z_2 N_2^- \mapsto z_1 e_1 + z_2 e_2,
\end{equation*}
where $e_j$ for $j=1,2$ are the standard unit vectors.
Equivalence \eqref{eqn:std} leads to
\begin{equation*}
(M_0, \Ad^m_{\fn_-}, S^m(\fn_-))
\simeq
(SL(2,\R), \sym_2^m, S^m(\C^2)).
\end{equation*}

\begin{obs}\label{obs:41}
By the duality theorem and \eqref{eqn:dualsym}, we have 
\begin{align}\label{eqn:equiv1}
\Diff_G(I(\poly_2^m, \lambda)^\ga, I(\poly_2^\ell,  \nu)^\beta)
&\simeq
\Hom_{\fg, P}(M_\fp(\sym_2^\ell,-\nu)^\gb, M_\fp(\sym_2^m, -\lambda)^\ga) \nonumber\\
&\simeq
\Hom_{P}(S^\ell(\C^2)_\beta\boxtimes \C_{-\nu}, M_\fp(\sym_2^m, -\lambda)^\alpha)
\nonumber\\
&=
\Hom_{MA}(S^\ell(\C^2)_\beta\boxtimes \C_{-\nu}, 
\left(M_\fp(\sym_2^m, -\lambda)^\alpha\right)^{\fn_+})\nonumber\\
&\subset
\Hom_{MA}(S^\ell(\C^2)_\beta\boxtimes \C_{-\nu}, M_\fp(\sym_2^m, -\lambda)^\alpha)
\nonumber\\
&=
\Hom_{MA}(S^\ell(\C^2)_\beta\boxtimes \C_{-\nu}, 
S(\fn_-)\otimes S^m(\C^2)_\alpha\otimes \C_{-\lambda}).
\end{align}

\noindent
Since the $MA$-representation $S(\fn_-)$ decomposes irreducibly into
\begin{equation*}
S(\fn_-)\vert_{MA} = \bigoplus_{k\in \Z_{\geq 0}}S^k(\fn_-),
\end{equation*}
the space of $MA$-homomorphisms in \eqref{eqn:equiv1}
can be further simplified to
\begin{align*}
\eqref{eqn:equiv1}
&=
\Hom_{MA}(S^\ell(\C^2)_\beta\boxtimes \C_{-\nu}, 
S(\fn_-)\otimes S^m(\C^2)_\alpha\otimes \C_{-\lambda})\\
&=
\bigoplus_{k \in \Z_{\geq 0}}
\Hom_{MA}(S^\ell(\C^2)_\beta\boxtimes \C_{-\nu}, 
S^k(\fn_-)\otimes S^m(\C^2)_\alpha\otimes \C_{-\lambda})\\
&\subset
\bigoplus_{k \in \Z_{\geq 0}}
\Hom_{M}(S^\ell(\C^2)_\beta, S^k(\fn_-)\otimes S^m(\C^2)_\alpha)\\
&\subset
\bigoplus_{k \in \Z_{\geq 0}}
\Hom_{M_0}(S^\ell(\C^2), S^k(\fn_-)\otimes S^m(\C^2))\\
&\simeq
\bigoplus_{k \in \Z_{\geq 0}}
\Hom_{SL(2,\R)}(S^\ell(\C^2), S^k(\C^2)\otimes S^m(\C^2)).
\end{align*}
\end{obs}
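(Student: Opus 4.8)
The statement is a chain of standard functorial identifications, so the plan is simply to justify each arrow in turn, descending the tower $P\supset MA\supset M\supset M_0\simeq SL(2,\R)$; I expect no genuinely hard step, only careful bookkeeping. To reach \eqref{eqn:equiv1}: the first isomorphism is \eqref{eqn:dual2}, that is, the duality theorem (Theorem~\ref{thm:duality}) in the form \eqref{eqn:duality2} combined with the self-duality \eqref{eqn:dualsym} of the symmetric powers (together with self-duality of the order-two character $\C_\ga$ and $\C_\lambda^\vee\simeq\C_{-\lambda}$), so that the contragredient $V^\vee$ of the inducing $P$-module of $I(\poly_2^m,\lambda)^\ga$ is $S^m(\C^2)_\ga\boxtimes\C_{-\lambda}$, hence $M_\fp(V^\vee)=M_\fp(\sym_2^m,-\lambda)^\ga$ in the notation \eqref{eqn:Verma} (and likewise $M_\fp(W^\vee)=M_\fp(\sym_2^\ell,-\nu)^\gb$). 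The second isomorphism is Frobenius reciprocity $\Hom_{\fg,P}(M_\fp(W^\vee),\cdot)\simeq\Hom_P(W^\vee,\cdot)$, i.e.\ the equivalence of \eqref{eqn:duality1} with \eqref{eqn:duality2} recorded in the Remark after Theorem~\ref{thm:duality}. For the third line I would use that $P=MA\ltimes N_+$ with $N_+=\exp(\fn_+(\R))$ connected: since $N_+$ acts trivially on $W^\vee=S^\ell(\C^2)_\gb\boxtimes\C_{-\nu}$, any $P$-map into $M_\fp(\sym_2^m,-\lambda)^\ga$ has image in its $N_+$-fixed vectors, which coincide with the $\fn_+$-invariants $(M_\fp(\sym_2^m,-\lambda)^\ga)^{\fn_+}$; as $N_+$ acts trivially there, $P$-equivariance reduces to $MA$-equivariance. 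The fourth-line inclusion is immediate because $MA$ normalizes $\fn_+$ and so preserves those $\fn_+$-invariants as an $MA$-submodule. The last equality in \eqref{eqn:equiv1} is Poincar\'e--Birkhoff--Witt: $\Cal{U}(\fg)\otimes_{\Cal{U}(\fp)}V^\vee\simeq\Cal{U}(\fn_-)\otimes_\C V^\vee$ $MA$-equivariantly, $MA$ acting diagonally by $\Ad$ on $\Cal{U}(\fn_-)$ and by the given representation on $V^\vee$; since $\fn_-$ is abelian $\Cal{U}(\fn_-)\simeq S(\fn_-)$, and with $V^\vee=S^m(\C^2)_\ga\boxtimes\C_{-\lambda}$ this gives $M_\fp(\sym_2^m,-\lambda)^\ga\simeq S(\fn_-)\otimes S^m(\C^2)_\ga\otimes\C_{-\lambda}$ as $MA$-modules.

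For the further simplification following \eqref{eqn:equiv1}: the decomposition $S(\fn_-)\vert_{MA}=\bigoplus_{k\geq 0}S^k(\fn_-)$ is just the degree grading of the symmetric algebra, each graded piece being $MA$-stable since $MA$ acts by grading-preserving algebra automorphisms; because the source $S^\ell(\C^2)_\gb\boxtimes\C_{-\nu}$ is finite-dimensional, $\Hom_{MA}$ commutes with this direct sum, which is the first equality. Restricting $MA$-maps to $M$-maps yields the first inclusion: the one-dimensional $A$-factors $\C_{-\nu}$ and $\C_{-\lambda}$ then drop out and the $A$-weight constraint is merely forgotten. Restricting $M$-maps to $M_0=SL(2,\R)$-maps yields the second inclusion: the characters $\C_\gb,\C_\ga$ of $M\simeq SL^{\pm}(2,\R)$ are trivial on $SL(2,\R)$, where $\det\equiv 1$, so $S^\ell(\C^2)_\gb\vert_{M_0}=S^\ell(\C^2)$ and likewise for the target. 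Finally, \eqref{eqn:std} promotes $(M_0,\Ad_{\fn_-},\fn_-)\simeq(SL(2,\R),\std,\C^2)$ to $(M_0,\Ad^k_{\fn_-},S^k(\fn_-))\simeq(SL(2,\R),\sym_2^k,S^k(\C^2))$, which gives the closing identification of the branching $\Hom$-spaces.

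As for the main obstacle, there is none of real substance: the only points warranting an explicit line are (i) checking that the contragredient $V^\vee$ is again of the form $S^m(\C^2)_\ga\boxtimes\C_{-\lambda}$, so that $M_\fp(V^\vee)$ keeps the shape $M_\fp(\sym_2^m,-\lambda)^\ga$, and (ii) the finite-dimensionality of the source, which is what permits pulling $\Hom_{MA}$ through the infinite direct sum $\bigoplus_{k}S^k(\fn_-)$. Everything else is the routine calculus of induced modules and restriction functors.
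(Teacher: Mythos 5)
Your proposal is correct and follows essentially the same route as the paper: the duality theorem \eqref{eqn:duality2} with \eqref{eqn:dualsym}, Frobenius reciprocity, the reduction of $P$-equivariance to $MA$-equivariance on $\fn_+$-invariants, the PBW identification $M_\fp(V^\vee)\simeq S(\fn_-)\otimes V^\vee$ as $MA$-modules, and the successive forgetful restrictions down to $M_0\simeq SL(2,\R)$. The two points you flag explicitly (the shape of the contragredient and finite-dimensionality of the source when commuting $\Hom$ with the graded sum) are exactly the right ones to check, and both are handled correctly.
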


Based on Observation \ref{obs:41},
we first show the equivalence $\text{(i)}\Leftrightarrow \text{(iii)}$ 
of Theorems \ref{thm:IDO1} and \ref{thm:gP1}
in stages by proceeding with the following four steps.

\vskip 0.1in

\begin{enumerate}

\item[]
Step 1:
Classify  $(k, m, \ell) \in (\Z_{\geq 0})^3$ such that
\begin{equation*}
\Hom_{SL(2,\R)}(S^\ell(\C^2), S^k(\C^2)\otimes S^m(\C^2))\neq \{0\}.
\end{equation*}

\item[]
Step 2: 
Classify
$(\alpha,\beta;\ell,m,k) \in (\Z/2\Z)^2 \times (\Z_{\geq 0})^3$ such that
\begin{equation*}
\Hom_{M}(S^\ell(\C^2)_\beta, S^k(\fn_-)\otimes S^m(\C^2)_\alpha) \neq \{0\}.
\end{equation*}

\item[]
Step 3:
Classify
$(\alpha, \beta; m, k, \ell; \lambda,\nu)  \in 
(\Z/2\Z)^2 \times (\Z_{\geq 0})^3 \times \C^2$ such that
\begin{equation*}
\Hom_{MA}(S^\ell(\C^2)_\beta \otimes \C_{-\nu}, 
S^k(\fn_-)\otimes S^m(\C^2)_\ga\otimes \C_{-\lambda})\neq \{0\}.
\end{equation*}

\item[]
Step 4: 
Classify $(\ga, \gb; m, \ell; \lambda, \nu) 
\in (\Z/2\Z)^2 \times (\Z_{\geq 0})^2 \times \C^2$ such that
\begin{equation*}
\Hom_{\fg, P}(M_\fp(\sym_2^\ell,-\nu)^\beta, M_\fp(\sym_2^m, -\lambda)^\alpha) \neq \{0\}.
\end{equation*}

\end{enumerate}

After achieving the equivalence $\text{(i)}\Leftrightarrow \text{(iii)}$,
we prove the whole equivalence in Section \ref{sec:IDOpf}.

\subsection{Step 1}
We start with the classification of $(k, m, \ell)$ such that
\begin{equation*}
\Hom_{SL(2,\R)}(S^\ell(\C^2), S^k(\C^2)\otimes S^m(\C^2))\neq \{0\}.
\end{equation*}

\begin{prop}\label{prop:Step1}
The following conditions on $(k, m, \ell) \in  (\Z_{\geq 0})^3$ are equivalent.
\begin{enumerate}
\item[\emph{(i)}] $\Hom_{SL(2,\R)}(S^\ell(\C^2), S^k(\C^2)\otimes S^m(\C^2))\neq \{0\}$.
\item[\emph{(ii)}] $m+k-\ell=2d$ for some $d \in \{0,\ldots, \min(k,m)\}$.
\end{enumerate}
\end{prop}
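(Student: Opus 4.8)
The plan is to identify $\Hom_{SL(2,\R)}(S^\ell(\C^2), S^k(\C^2)\otimes S^m(\C^2))$ with a multiplicity space in the classical Clebsch--Gordan decomposition and simply read off when it is nonzero. Since every finite-dimensional representation of $SL(2,\R)$ extends to a holomorphic representation of $SL(2,\C)$ and is therefore determined by the underlying $\f{sl}(2,\C)$-module, and since finite-dimensional $\f{sl}(2,\C)$-modules are completely reducible, it suffices to work on the level of $\f{sl}(2,\C)$-modules.

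The main step is then the Clebsch--Gordan formula
\[
S^k(\C^2)\otimes S^m(\C^2)\;\simeq\;\bigoplus_{d=0}^{\min(k,m)}S^{k+m-2d}(\C^2)
\]
as $\f{sl}(2,\C)$-modules. I would prove this by the standard highest-weight argument: every irreducible summand is some $S^j(\C^2)$ with $0\le j\le k+m$, and comparing weight-space dimensions on both sides (equivalently, multiplying the characters $\chi_k(t)\chi_m(t)$ with $\chi_j(t)=(t^{j+1}-t^{-j-1})/(t-t^{-1})$) shows that precisely $S^{k+m}(\C^2),S^{k+m-2}(\C^2),\ldots,S^{|k-m|}(\C^2)$ occur, each with multiplicity one. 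Alternatively, one may exhibit the highest-weight vectors directly as Gordan's transvectants (Rankin--Cohen brackets) $\tau_d$ of order $d$, $0\le d\le\min(k,m)$, and conclude by a dimension count; this constructive version is the one actually reused in Section \ref{sec:construction}.

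Granting the formula, Schur's lemma together with complete reducibility gives that $\Hom_{SL(2,\R)}(S^\ell(\C^2), S^k(\C^2)\otimes S^m(\C^2))$ is nonzero exactly when $S^\ell(\C^2)$ occurs as a summand, i.e.\ when $\ell\in\{k+m,\,k+m-2,\,\ldots,\,|k-m|\}$. Writing $\ell=k+m-2d$, this is exactly the requirement $d\in\{0,1,\ldots,\min(k,m)\}$ (note $k+m-2\min(k,m)=|k-m|$), which is condition (ii); conversely any such $d$ yields a legitimate $\ell\in\Z_{\ge0}$. Hence (i) $\Leftrightarrow$ (ii). I do not expect a real obstacle here, since the content is the classical Clebsch--Gordan rule; the only points needing a word of care are the reduction from $SL(2,\R)$-representations to $\f{sl}(2,\C)$-modules (so that semisimplicity and the classification of irreducibles apply) and, in the constructive variant, the routine $e,f,h$-computation verifying that $\tau_d$ is a nonzero $\f{sl}(2,\C)$-homomorphism into the component $S^{k+m-2d}(\C^2)$.
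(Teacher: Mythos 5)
Your proposal is correct and matches the paper's proof: both reduce to the classical Clebsch--Gordan decomposition $S^k(\C^2)\otimes S^m(\C^2)\simeq\bigoplus_{d=0}^{\min(k,m)}S^{k+m-2d}(\C^2)$ and read off exactly when $S^\ell(\C^2)$ occurs as a summand. The extra remarks you give (character computation, transvectants) are standard justifications of that formula, which the paper simply cites without proof.
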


\begin{proof}
Recall the classical Clebsch–Gordan formula:
\begin{equation}\label{eqn:CG}
S^k(\C^2)\otimes S^m(\C^2)\vert_{SL(2,\R)}
\simeq 
\bigoplus_{0\leq d \leq \min(k,m)}S^{k+m-2d}(\C^2).
\end{equation}
The assertion immediately follows from \eqref{eqn:CG}.
\end{proof}

\subsection{Step 2}
The next goal is to classify 
$(\alpha,\beta;\ell,m,k)$ such that
\begin{equation*}
\Hom_{M}(S^\ell(\C^2)_\beta, S^k(\fn_-)\otimes S^m(\C^2)_\alpha) \neq \{0\}.
\end{equation*}
A direct computation shows that
\begin{equation*}
(M,\Ad^k_{\fn_-},S^k(\fn_-))
\simeq (SL^{\pm}(2,\R), \sgn^{k}\otimes \sym_2^k, S^k(\C^2)).
\end{equation*}
Thus, as an $M$-representation, we have
\begin{equation*}
S^k(\fn_-)\otimes S^m(\C^2)_\ga
\simeq
S^k(\C^2) \otimes S^m(\C^2)_{\ga+k},
\end{equation*}
where $\ga + k \in \{\pm\}$ is defined as 
\begin{equation}\label{eqn:ak}
\ga+k=
\begin{cases}
+ & \text{if $\ga=(-1)^k$},\\
- & \text{if $\ga=(-1)^{k+1}$}.
\end{cases}
\end{equation}

\begin{prop}\label{prop:Step2}
The following conditions on 
$(\alpha,\beta;\ell,m,k)\in (\Z/2\Z)^2 \times (\Z_{\geq 0})^3$
are equivalent.
\begin{enumerate}
\item[\emph{(i)}] 
$\Hom_{M}(S^\ell(\C^2)_\beta, S^k(\fn_-)\otimes S^m(\C^2)_\alpha) \neq \{0\}$.
\item[\emph{(ii)}] Both of the following conditions hold:
\begin{enumerate}
\item[\emph{(ii-a)}] $m+k-\ell=2d$ for some $d \in \{0,\ldots, \min(k,m)\}$;
\item[\emph{(ii-b)}] $\beta -\alpha=  k$.
\end{enumerate}
\end{enumerate}
\end{prop}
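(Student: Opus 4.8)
The plan is to strip $M\simeq SL^{\pm}(2,\R)$ down to its identity component $M_0\simeq SL(2,\R)$, read condition (ii-a) off from there, and then recover the parity condition (ii-b) from the residual action of the component group $M/M_0\cong\Z/2\Z$.

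First I would use the $M$-module isomorphism $(M,\Ad^k_{\fn_-},S^k(\fn_-))\simeq(SL^{\pm}(2,\R),\sgn^k\otimes\sym_2^k,S^k(\C^2))$ recalled just before the statement, together with \eqref{eqn:std}, to identify $S^k(\fn_-)\otimes S^m(\C^2)_\ga$ with $S^k(\C^2)\otimes S^m(\C^2)$ as $SL(2,\R)$-modules — the subscript $\ga$ and the $\sgn^k$-twist being invisible to $M_0$. By Proposition \ref{prop:Step1}, i.e.\ the Clebsch--Gordan formula \eqref{eqn:CG}, the space $\Hom_{SL(2,\R)}(S^\ell(\C^2),S^k(\C^2)\otimes S^m(\C^2))$ is nonzero precisely when (ii-a) holds, and in that case $S^\ell(\C^2)$ occurs with multiplicity exactly one, so the $d$ witnessing (ii-a) is unique, namely $d=(m+k-\ell)/2$. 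Since every $M$-homomorphism is in particular an $M_0$-homomorphism, $\Hom_M(S^\ell(\C^2)_\beta,S^k(\fn_-)\otimes S^m(\C^2)_\ga)$ sits inside this space; hence (ii-a) is necessary for (i) (and, as a byproduct, $\dim\Hom_M\le 1$ throughout).

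For the remaining implication I would fix an element $h\in M\setminus M_0$ — one with determinant $-1$ in the $SL^{\pm}(2,\R)$-picture — and use that $M$ is generated by $M_0$ together with $h$, so that an $M_0$-homomorphism is $M$-equivariant if and only if it also intertwines the two $h$-actions. When (ii-a) holds, the essentially unique $M_0$-map sends $S^\ell(\C^2)$ isomorphically onto the $d$-th Clebsch--Gordan summand $W_d$ of $S^k(\C^2)\otimes S^m(\C^2)$, where $W_d\cong S^\ell(\C^2)$ as $M_0$-modules. The crux is to determine the character by which $M/M_0$ acts on $W_d$: as a module for $GL(2)$, hence for $SL^{\pm}(2,\R)$, the $d$-th Clebsch--Gordan summand of $S^k(\C^2)\otimes S^m(\C^2)$ is $S^{k+m-2d}(\C^2)$ twisted by the $d$-th power of the determinant character — visible from the $d$-th Gordan transvectant (which transforms by $\det^{d}$), from a comparison of $GL(2)$-highest weights, or from a one-line character check at $h$. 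Combining this sign twist on $W_d$ with the $\sgn^k$-twist coming from $S^k(\fn_-)$ (this is exactly the point of \eqref{eqn:ak}) and the $\sgn^\ga$-twist on $S^m(\C^2)_\ga$, and matching against the $\sgn^\beta$-twist on the source $S^\ell(\C^2)_\beta$, Schur's lemma shows the $M_0$-map is $M$-equivariant exactly when the resulting signs agree, yielding the parity condition (ii-b). Conversely, if (ii-a) holds but the signs disagree the unique $M_0$-map fails to be $M$-equivariant, so $\Hom_M=\{0\}$; and if (ii-a) fails then already $\Hom_{M_0}=\{0\}$. Together these give (i) $\Leftrightarrow$ (ii).

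The main obstacle is the parity bookkeeping of the second stage: one has to keep precise track of how $M/M_0\cong\Z/2\Z$ acts on each irreducible constituent of the tensor product — which power of the sign character is pinned to the $d$-th Clebsch--Gordan summand, and in which direction the sign twist on $S^k(\fn_-)$ runs (so that \eqref{eqn:ak} is invoked correctly). Once those are settled, (ii-b) drops out of Schur's lemma applied to the essentially unique $SL(2,\R)$-embedding; the reduction to $SL(2,\R)$, the appeal to Proposition \ref{prop:Step1}, and the multiplicity bound are routine.
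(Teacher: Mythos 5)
Your strategy --- reduce to $M_0=SL(2,\R)$ for (ii-a), then recover (ii-b) by tracking the component group $M/M_0$ --- is the right one, and is in fact more careful than the paper's one-line proof, which never addresses how $M/M_0$ acts on the individual Clebsch--Gordan summands. You correctly isolate the crux: the $d$-th summand $W_d\simeq S^{m+k-2d}(\C^2)$ of $S^k(\C^2)\otimes S^m(\C^2)$ carries a $\det^d$-twist as an $SL^{\pm}(2,\R)$-module. The gap is that you then assert, without actually carrying out the arithmetic, that matching all the sign twists yields (ii-b). If you do write it out, the $S^\ell$-isotypic part of $S^k(\fn_-)\otimes S^m(\C^2)_\alpha$ carries the $M/M_0$-character $\sgn^{\,k+\alpha+d}$ with $d=(m+k-\ell)/2$, and Schur's lemma forces
\begin{equation*}
\beta \equiv \alpha + k + d \pmod 2.
\end{equation*}
This agrees with (ii-b) only when $d$ is even, so the step ``yielding the parity condition (ii-b)'' does not close in general.

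The $+d$ is not a bookkeeping artifact that cancels. Take $k=m=1$, $\ell=0$ (so $d=1$): the unique $SL(2,\R)$-invariant line in $\fn_-\otimes S^1(\C^2)$ is $\C\omega$ with $\omega=N_1^-\otimes e_2-N_2^-\otimes e_1$, and at $m_0=\diag(-1,1,-1)\in M\setminus M_0$ one has $\Ad(m_0)N_1^-=-N_1^-$, $\Ad(m_0)N_2^-=N_2^-$, $m_0\cdot e_1=e_1$, $m_0\cdot e_2=-e_2$, hence $m_0\cdot\omega=\omega$; the line is the \emph{trivial} $M$-type, so $\beta=\alpha$ rather than $\beta-\alpha=1$. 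Equivalently, in the $m=k=0$ instance of \eqref{eqn:BGGres} the second operator is exterior differentiation $\Omega^1\to\Omega^2$ on $\RP^2$, whose target $\Exterior^2 T^*(\RP^2)$ is the nontrivial orientation line bundle (the $\sgn$-twisted one), again the opposite parity from what (ii-b) predicts for the last term. The computation you outline, done honestly, replaces (ii-b) by $\beta-\alpha=k+d$ with $d=(m+k-\ell)/2$; either your write-up should state that, or it should explain how the $\det^d$ disappears under the conventions in force --- and the same question ought to be put to the proposition as stated, since the discrepancy propagates to the parity superscripts in the later theorems and in \eqref{eqn:BGGres}.
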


\begin{proof}
The proposition readily follows from Proposition \ref{prop:Step1} and the above 
observation.
\end{proof}

\subsection{Step 3}
As a third step,
we wish to classify 
$(\alpha, \beta; m, k, \ell; \lambda,\nu)$
such that
\begin{equation*}
\Hom_{MA}(S^\ell(\C^2)_\beta \otimes \C_{-\nu}, 
S^k(\fn_-)\otimes S^m(\C^2)_\ga\otimes \C_{-\lambda})\neq \{0\}.
\end{equation*}

\noindent
It follows from \eqref{eqn:nR} that $\wH_0$ acts on $S^k(\fn_-)$ 
by weight $-\frac{3}{2}k$, which shows that, as an $MA$-module, we have 
\begin{equation}\label{eqn:Step2}
S^k(\fn_-)\otimes S^m(\C^2)_\alpha \otimes \C_{-\lambda}
\simeq
\left( S^k(\C^2) \otimes S^m(\C^2)_{\alpha +k} \right) 
\boxtimes \C_{-(\lambda+\frac{3}{2}k)}.
\end{equation}

\begin{prop}\label{prop:Step3}
The following conditions on 
$(\alpha, \beta; m, k, \ell; \lambda,\nu)\in (\Z/2\Z)^2 \times (\Z_{\geq 0})^3 \times \C^2$ 
are equivalent:
\begin{enumerate}
\item[\emph{(i)}] 
$\Hom_{MA}(S^\ell(\C^2)_\beta \otimes \C_{-\nu}, 
S^k(\fn_-)\otimes S^m(\C^2)_\ga\otimes \C_{-\lambda})\neq \{0\}$;
\item[\emph{(ii)}] All of the following conditions hold:
\begin{enumerate}
\item[\emph{(ii-a)}] $m+k-\ell=2d$ for some $d \in \{0,\ldots, \min(k,m)\}$;
\item[\emph{(ii-b)}] $\beta-\alpha=k$;
\item[\emph{(ii-c)}] $\nu-\lambda = \frac{3}{2}k$.
\end{enumerate}
\end{enumerate}
\end{prop}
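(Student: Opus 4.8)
The plan is to reduce the $MA$-calculation to the $M$-calculation of Proposition \ref{prop:Step2} together with a single scalar (character) matching for the $A$-factor. First I would use \eqref{eqn:Step2}: since $\wH_0$ acts on $S^k(\fn_-)$ by the weight $-\tfrac{3}{2}k$ (by \eqref{eqn:nR}), one has the $MA$-isomorphism
\begin{equation*}
S^k(\fn_-)\otimes S^m(\C^2)_\ga\otimes \C_{-\lambda}
\;\simeq\;
\bigl(S^k(\C^2)\otimes S^m(\C^2)_{\ga+k}\bigr)\boxtimes \C_{-(\lambda+\frac{3}{2}k)}.
\end{equation*}
Thus the target $MA$-module has $A$-character $\C_{-(\lambda+\frac{3}{2}k)}$, while the source $S^\ell(\C^2)_\beta\otimes\C_{-\nu}$ has $A$-character $\C_{-\nu}$.

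Next I would exploit that $MA\simeq M\times A$ with $A$ acting by scalars on both sides. Any $MA$-homomorphism is in particular an $A$-homomorphism between one-dimensional $A$-modules, hence forces the two $A$-characters to coincide, i.e.\ $-\nu=-(\lambda+\tfrac{3}{2}k)$, which is precisely condition (ii-c). Conversely, once (ii-c) holds the two $A$-actions agree, so $A$-equivariance is automatic and every $M$-homomorphism $S^\ell(\C^2)_\beta\to S^k(\fn_-)\otimes S^m(\C^2)_\ga$ is already an $MA$-homomorphism. Therefore, assuming (ii-c),
\begin{equation*}
\Hom_{MA}\bigl(S^\ell(\C^2)_\beta\otimes\C_{-\nu},\,S^k(\fn_-)\otimes S^m(\C^2)_\ga\otimes\C_{-\lambda}\bigr)
\;\simeq\;
\Hom_M\bigl(S^\ell(\C^2)_\beta,\,S^k(\fn_-)\otimes S^m(\C^2)_\ga\bigr).
\end{equation*}

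Finally I would invoke Proposition \ref{prop:Step2}, which says the right-hand space is non-zero exactly when (ii-a) $m+k-\ell=2d$ for some $d\in\{0,\ldots,\min(k,m)\}$ and (ii-b) $\beta-\ga=k$ hold. Combining this with the character condition (ii-c) yields the equivalence (i) $\Leftrightarrow$ (ii). I do not expect any real obstacle here: the whole argument is the routine splitting $MA\simeq M\times A$, reducing to Proposition \ref{prop:Step2} once the $A$-characters are matched, and the only point demanding care is the bookkeeping of the $A$-weight $-\tfrac{3}{2}k$ carried by $S^k(\fn_-)$ together with the sign conventions for $\C_{-\lambda}$ and $\C_{-\nu}$ fixed in Section \ref{sec:SL3}.
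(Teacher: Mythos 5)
Your proposal is correct and follows essentially the same route as the paper: combine the $MA$-isomorphism \eqref{eqn:Step2}, which isolates the $A$-weight $-(\lambda+\tfrac{3}{2}k)$ and hence forces (ii-c), with the $M$-level classification of Proposition \ref{prop:Step2}, which supplies (ii-a) and (ii-b). The paper's proof is just a one-line compression of exactly this argument.
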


\begin{proof}
Proposition \ref{prop:Step2} together with \eqref{eqn:Step2} concludes the proposition.
\end{proof}

\subsection{Step 4}
\label{sec:Step4}
In the final step,
we wish to classify $(\ga, \gb; m,\ell; \lambda, \nu)$ such that
\begin{equation}\label{eqn:Step4}
\Hom_{\fg, P}(M_\fp(\sym_2^\ell,-\nu)^\beta, M_\fp(\sym_2^m, -\lambda)^\alpha) \neq \{0\}.
\end{equation}

It follows from Observation \ref{obs:41} that if \eqref{eqn:Step4} holds, then
the parameters $(\ga, \gb; m,\ell; \lambda, \nu)$ satisfy
the condition (ii) of Proposition \ref{prop:Step3} for some $k \in \Z_{\geq 0}$. 
Further, by \cite[Cor.\ 2.16]{KuOr19}, 
the parameter $\ga$ can be chosen freely as far as $\beta-\ga \in \Z_{\geq 0}$.
Thus, it suffices to classify 
$(m,k,\ell; \lambda) \in  (\Z_{\geq 0})^3 \times \C$ 
such that
\begin{equation*}
\Hom_{\fg, P}(M_\fp(\sym_2^\ell,-(\lambda+\tfrac{3}{2}k))^{\alpha+k},
M_\fp(\sym_2^m, -\lambda)^\alpha) \neq \{0\}
\end{equation*}
with condition
\begin{equation*}
\frac{m+k-\ell}{2} \in \Z\cap [0,\min(k,m)].
\end{equation*}

\begin{thm}\label{thm:Verma}
The following conditions on 
$(m,k,\ell; \lambda) \in  (\Z_{\geq 0})^3 \times \C$ are equivalent.
\begin{enumerate}
\item[\emph{(i)}]
$\Hom_{\fg,P}(M_\fp(\sym_2^{\ell},-(\lambda+\tfrac{3}{2}k))^{\ga+k},
M_\fp(\sym_2^m, -\lambda)^\ga) \neq \{0\}$.

\item[\emph{(ii)}]
One of the following cases holds:
\begin{enumerate}
\item[\emph{(ii-a)}] $(k, \ell) = (0, m)$ and $\lambda \in \C$;
\item[\emph{(ii-b)}]
$\ell =m+ k$ and $\lambda = \tfrac{1}{2}(2-m-2k)$;
\item[\emph{(ii-c)}] 
$\ell=m-k$ and $\lambda = \tfrac{1}{2}(4+m-2k)$.
\end{enumerate}
\end{enumerate}
\end{thm}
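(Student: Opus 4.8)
The plan is to pass from the two generalized Verma modules to the infinitesimal-character parametrization $N_{\fp}(\mu)$ of Section~\ref{sec:Std1} and then read the answer off from the BGG--Verma theorem (Theorem~\ref{thm:link}) together with Boe's criterion (Theorem~\ref{thm:Boe}), for $\fg=\f{sl}(3,\C)$ with $\fp$ the standard parabolic for which $\Pi(\fl)=\{\alpha_2\}$ and $\fn_+$ has roots $\alpha_1,\alpha_1+\alpha_2$. Concretely I write $\fh^{*}$ in $\epsilon$-coordinates $(x_1,x_2,x_3)$ with $x_1+x_2+x_3=0$, so that $\rho=\epsilon_1-\epsilon_3$ and $\theta(\wH_0)=\tfrac32 x_1$. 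Matching the $\f{sl}(2)$-content and the $\wH_0$-weight of the inducing module identifies $M_{\fp}(\sym_2^{m},-\lambda)^{\ga}$ with $N_{\fp}(\eta)$ where $\langle\eta,\alpha_2^{\vee}\rangle=m+1$ and $\eta(\wH_0)=-\lambda+\tfrac32$, i.e.\ $x^{\eta}=(-\tfrac23\lambda+1,\ \tfrac13\lambda+\tfrac m2,\ \tfrac13\lambda-\tfrac m2-1)$, and likewise identifies $M_{\fp}(\sym_2^{\ell},-(\lambda+\tfrac32 k))^{\ga+k}$ with $N_{\fp}(\mu)$ where $x^{\mu}=(-\tfrac23\lambda-k+1,\ \tfrac13\lambda+\tfrac k2+\tfrac\ell2,\ \tfrac13\lambda+\tfrac k2-\tfrac\ell2-1)$. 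The parity label $\ga+k$ is exactly the one making a $\fg$-homomorphism automatically $M$-equivariant, hence $P$-equivariant, so it plays no further role in the argument.

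To prove (i)$\Rightarrow$(ii): by Observation~\ref{obs:41} and Proposition~\ref{prop:Step3} the hypothesis already forces $m+k-\ell=2d$ with $0\le d\le\min(k,m)$, and $\Hom_{\fg,P}(N_{\fp}(\mu),N_{\fp}(\eta))\neq\{0\}$ forces $\Hom_{\fg}(N(\mu),N(\eta))\neq\{0\}$, hence by Theorem~\ref{thm:link} that $\mu\in W\eta$, i.e.\ $x^{\mu}$ is a permutation of $x^{\eta}$. Running through the six elements of $W=S_3$, one checks that only the identity, $s_{\alpha_1}=(1\,2)$, and $s_{\alpha_1+\alpha_2}=(1\,3)$ can occur, and that — after imposing $k,d\ge 0$, $d\le\min(k,m)$ — they force precisely (ii-a) (identity: $k=0$, hence $\ell=m$ and $\lambda$ free), (ii-b) ($s_{\alpha_1}$: $d=0$, $\ell=m+k$, $\lambda=\tfrac12(2-m-2k)$), and (ii-c) ($s_{\alpha_1+\alpha_2}$: $d=k$, so in particular $k\le m$, $\ell=m-k$, $\lambda=\tfrac12(4+m-2k)$).

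For the converse (ii)$\Rightarrow$(i): in case (ii-a) one has $\mu=\eta$ and the identity map works. In cases (ii-b) and (ii-c) a short computation gives $\eta-\mu=k\alpha_1$ resp.\ $k(\alpha_1+\alpha_2)$ and $\mu=s_{\alpha_1}\eta$ resp.\ $\mu=s_{\alpha_1+\alpha_2}\eta$, while $\langle\eta,\alpha_1^{\vee}\rangle$ resp.\ $\langle\eta,(\alpha_1+\alpha_2)^{\vee}\rangle$ equals $k\in\Z_{\geq 0}$; thus the length-one sequence $(\alpha_1)$ resp.\ $(\alpha_1+\alpha_2)$ links $\eta$ to $\mu$ and $\Hom_{\fg}(N(\mu),N(\eta))\neq\{0\}$. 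To upgrade this to a \emph{generalized} Verma homomorphism I invoke Boe's criterion (Theorem~\ref{thm:Boe}): it suffices to see that the displayed length-one sequence is the only sequence linking $\eta$ to $\mu$, and that for it $\eta_1=\mu\in\mathbf{P}^{+}_{\fl}$ — the latter being immediate since $\langle\mu,\alpha_2^{\vee}\rangle=\ell+1\ge 1$. Then $\varphi_{\std}\colon N_{\fp}(\mu)\to N_{\fp}(\eta)$ is nonzero, which gives (i). Since the Hom space is at most one-dimensional by the chain of inclusions in Observation~\ref{obs:41}, the standard map exhausts it, so no non-standard map ever appears; this is exactly why the $\fn_+$-invariance (the conformal-weight computation) can be bypassed.

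The step I expect to cost the most care is the uniqueness of the linking sequence inside Boe's criterion. In $S_3$ the only rivals to the direct sequence are the length-three words $(\alpha_2,\alpha_1+\alpha_2,\alpha_2)$ in case (ii-b) and $(\alpha_1,\alpha_2,\alpha_1)$, $(\alpha_2,\alpha_1,\alpha_2)$ in case (ii-c), and I would rule these out by showing each violates condition~(2) of Definition~\ref{def:link} — in case (ii-c) already because $\langle\eta,\alpha_1^{\vee}\rangle=k-m-1<0$ (using $k\le m$), and by the analogous sign bookkeeping in case (ii-b) — so none of them is a linking sequence and Boe's condition reduces to the single easily verified check above. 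The remaining work — tracking the $\rho$-shift and the $\tfrac32 k$-shift in the coordinates of $\mu,\eta$, and the finite $S_3$ casework in the second paragraph — is purely mechanical.
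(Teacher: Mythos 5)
Your proposal is correct and follows essentially the same strategy as the paper: comparing infinitesimal characters in $\epsilon$-coordinates (via the shifted parameters $\mu,\eta$), running the $S_3$ casework with the constraint from Observation~\ref{obs:41}/Proposition~\ref{prop:Step3} to narrow down to (ii-a)--(ii-c), and then upgrading an ordinary-Verma homomorphism to a nonzero standard map between generalized Verma modules via Boe's criterion (Theorem~\ref{thm:Boe}). Your coordinates for $x^\eta$ and $x^\mu$ agree with the paper's (after the substitution $\wlambda=\tfrac23\lambda$), your identification of the two nontrivial Weyl elements $s_{\alpha_1}$ and $s_{\alpha_1+\alpha_2}$ matches cases (ii-b) and (ii-c), and your observation that one-dimensionality from Observation~\ref{obs:41} makes the standard map the whole story is exactly the point that lets the paper avoid any $\fn_+$-invariance computation. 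The only cosmetic difference is bookkeeping: the paper organizes the six Weyl-group cases by which coordinate of $x^\eta$ equals $x^\mu_1$ (leading to their subcases \eqref{eqn:2a}/\eqref{eqn:2b} and \eqref{eqn:3a}/\eqref{eqn:3b}), while you name the permutations directly; and you spell out a few checks the paper leaves implicit (e.g., $\langle\mu,\alpha_2^\vee\rangle=\ell+1$, and ruling out the length-three rival linking sequence in case (ii-b)).
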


\begin{rem}\label{rem:CPRV}
Theorem \ref{thm:Verma} shows that only the largest component
$S^{m+k}(\C^2)$ and the smallest component  $S^{m-k}(\C^2)$
of the tensor product decomposition of $S^k(\C^2)\otimes S^m(\C^2)$
in \eqref{eqn:CG} contribute to the construction of intertwining differential
operators. These components $S^{m+k}(\C^2)$ and $S^{m-k}(\C^2)$ 
are  called 
the \emph{Cartan component} and \emph{PRV component} of 
$S^k(\C^2)\otimes S^m(\C^2)$, respectively (cf.\ \cite{Kumar10}),
where PRV comes from the names 
K.\ R.\ Parthasarathy, R.\ Ranga Rao, and V.\ S.\ Varadarajan.

 If  $m=0$, then 
we regard $S^k(\C^2)=S^k(\C^2)\otimes S^0(\C^2)$  as both the Cartan component
and PRV component.
\end{rem}

We prove Theorem \ref{thm:Verma} by
utilizing the idea of standard map between generalized Verma modules. 
Then we first rewrite the generalized Verma modules as in Section \ref{sec:Std1}.

Take a Cartan subalgebra $\fh$ of $\fg$ such that $\fh:=\{\diag(a_1, a_2, a_3): 
\sum_{j=1}^3a_j=0\}$. Let $\gD\equiv \gD(\fg,\fh)$ 
denote the set of roots of $\fg$ with respect to $\fh$.
As usual, we take sets of positive roots $\gD^+$ and simple roots $\Pi$ as
\begin{equation*}
\gD^+=\{ \eps_1 - \eps_2, \eps_2 - \eps_3, \eps_1 - \eps_3\}
\end{equation*}
and
\begin{equation*}
\Pi = \{ \eps_1 - \eps_2, \eps_2 - \eps_3\}.
\end{equation*}
We realize $\fh^*$ as a subspace of $\C^{3}$ by identifying 
$\eps_j$ with the canonical basis $e_j$,
and write elements in $\fh^*$ in coordinates.
For instance, half the sum of the positive roots 
$\rho = (1/2)\sum_{\ga \in \gD^+}\ga$ is given by
\begin{equation*}
\rho = \frac{1}{2}(2, 0, -2)=(1,0,-1).
\end{equation*}
Likewise, the differential $d\chi \in \fh^*$ 
of the character $\chi$ of $A$  with $\lambda = 1$ defined in 
\eqref{eqn:chi} is expressed as
\begin{equation}\label{eqn:dchi}
d\chi= \frac{1}{3}(2, -1, -1),
\end{equation}
so that $d\chi^\lambda$ is given by
\begin{equation*}
d\chi^\lambda=\lambda \cdot d\chi =\frac{\lambda}{3}(2, -1, -1).
\end{equation*}

A root reflection with respect to $\ga \in \gD$ is denoted by $s_\ga$. 
The Weyl group $W$ of $\fg$ is then given by
\begin{equation}\label{eqn:Weyl}
W = \{e, 
s_{\eps_1-\eps_2}, \,
s_{\eps_2-\eps_3}, \,
s_{\eps_2-\eps_3}s_{\eps_1-\eps_2}, \,
s_{\eps_1-\eps_2}s_{\eps_2-\eps_3}, \,
s_{\eps_1-\eps_2}s_{\eps_2-\eps_3}s_{\eps_1-\eps_2}\}
\end{equation}
with
\begin{equation}\label{eqn:13}
s_{\eps_1-\eps_2}s_{\eps_2-\eps_3}s_{\eps_1-\eps_2}
= s_{s_{\eps_1-\eps_2}(\eps_2-\eps_3)}
=s_{\eps_1-\eps_3}
=s_{s_{\eps_2-\eps_3}(\eps_1-\eps_2)}
=s_{\eps_2-\eps_3}s_{\eps_1-\eps_2}s_{\eps_2-\eps_3}.
\end{equation}

\vskip 0.1in

Recall from Section \ref{sec:Std1} that, for $\mu \in \mathbf{P}^+_{\fl}$
for $\fl = \fm\oplus\fa$,
we write
\begin{equation*}
N_\fp(\mu)=\Cal{U}(\fg)\otimes_{\Cal{U}(\fp)} E(\mu-\rho),
\end{equation*}
where $E(\mu-\rho)$ is the finite-dimensional simple $\Cal{U}(\fl)$-module
with highest weight $\mu-\rho$. 

Write
\begin{equation*}
\omega_1:=\frac{1}{2}(0, 1, -1).
\end{equation*} 
Then the generalized Verma modules in consideration can be expressed as
\begin{align*}
M_\fp(\sym_2^m,-\lambda)^\ga
&=N_\fp(m\omega_1 -\lambda d\chi+\rho)^\ga, \\[5pt]
M_\fp(\sym_2^\ell, -(\lambda +\tfrac{3}{2}k))^\gb
&=N_\fp(\ell\omega_1 -(\lambda +\tfrac{3}{2}k)d\chi+\rho)^\gb.
\end{align*}
Now we wish to classify $(m,k,\ell; \lambda)$ such that
\begin{equation}\label{eqn:HomVerma}
\Hom_{\fg,P}
(N_\fp(\ell\omega_1 -(\lambda +\tfrac{3}{2}k)d\chi+\rho)^{\ga+k},
N_\fp(m\omega_1 -\lambda d\chi+\rho)^\ga)
\neq \{0\}
\end{equation}
with condition
\begin{equation}\label{eqn:cond}
\frac{m+k-\ell}{2} \in \Z\cap [0,\min(k,m)].
\end{equation}

We start with a necessary condition of \eqref{eqn:HomVerma}.

\begin{prop}\label{prop:Verma}
If \eqref{eqn:HomVerma} holds, then one of the following conditions holds.
\begin{enumerate}
\item[\emph{(a)}] $(k, \ell) = (0, m)$ and $\lambda \in \C$.
\item[\emph{(b)}] 
$\ell = m+k$ and $\lambda = \tfrac{1}{2}(2-m-2k)$.
\item[\emph{(c)}] 
$\ell=m-k$ and $\lambda = \tfrac{1}{2}(4+m-2k)$.
\end{enumerate}
\end{prop}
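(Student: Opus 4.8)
The plan is to apply Theorem \ref{thm:link} (BGG--Verma) to translate the nonvanishing of $\Hom_{\fg,P}$ between the two generalized Verma modules into the existence of a link, and then to read off the constraints on the parameters. First I would record the two highest weights in $\fh^*$-coordinates. Using $\omega_1 = \tfrac12(0,1,-1)$, $d\chi = \tfrac13(2,-1,-1)$ and $\rho=(1,0,-1)$, set $\eta := m\omega_1 - \lambda\, d\chi + \rho$ and $\mu := \ell\omega_1 - (\lambda+\tfrac32 k)\,d\chi + \rho$; both should be written out as explicit triples in $\C^3$ modulo the line $(1,1,1)$. By the standard reduction (the paragraph preceding Theorem \ref{thm:link}), if $\Hom_{\fg,P}(N_\fp(\mu),N_\fp(\eta))\neq\{0\}$ then $\Hom_{\fg}(N(\mu),N(\eta))\neq\{0\}$, so by Theorem \ref{thm:link}(iii) there is a sequence $(\beta_1,\dots,\beta_t)$ of positive roots linking $\eta$ to $\mu$. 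In particular $\mu = w\cdot\eta := w(\eta)$ for some $w$ in the Weyl group $W$ listed in \eqref{eqn:Weyl}, i.e.\ $\mu$ and $\eta$ lie in the same $W$-orbit (equivalently, have the same infinitesimal character).

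The core of the argument is then a finite case check: $|W|=6$, so I would run through each $w \in W$ as in \eqref{eqn:Weyl}, impose $w(\eta) = \mu$, and solve the resulting linear system for $(\ell,\lambda,k)$ in terms of $(m,\lambda)$. The identity element forces $\ell=m$ and $k=0$, giving case (a). Each nontrivial $w$ either produces no solution with $\ell,k\in\Z_{\ge 0}$, or forces one of the two relations $\ell=m+k$ or $\ell=m-k$ together with exactly one value of $\lambda$; a short computation with the coordinates $\eta,\mu$ above should yield $\lambda=\tfrac12(2-m-2k)$ in the first case and $\lambda=\tfrac12(4+m-2k)$ in the second, matching (b) and (c). Here I would also use the side condition \eqref{eqn:cond}, which already guarantees $m+k-\ell$ is a nonnegative even integer bounded by $2\min(k,m)$, to discard the spurious branches (e.g.\ those giving $\ell$ outside $[m-k,m+k]$ or of the wrong parity). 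It is worth noting that for a rank-one abelian nilradical the singular-infinitesimal-character subtleties are mild, so the $W$-orbit computation is genuinely the whole story at the level of necessary conditions; sufficiency and the precise dimension count are deferred to later sections (this proposition only asserts necessity).

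The step I expect to be the main obstacle is bookkeeping rather than conceptual: correctly handling the $\rho$-shift and the quotient by the center $(1,1,1)$ when solving $w(\eta)=\mu$, so that the two distinct families (b) and (c) are cleanly separated and no case is double-counted via the coincidence \eqref{eqn:13} of Weyl elements. A secondary point to be careful about is that Theorem \ref{thm:link} gives only $\Hom_{\fg}(N(\mu),N(\eta))\neq\{0\}$, which is weaker than a link existing with the dominance conditions of Definition \ref{def:link}; but since we are proving a \emph{necessary} condition, it suffices to use that $\mu$ and $\eta$ share an infinitesimal character, i.e.\ $\mu\in W\eta$, and no integrality/dominance refinement is needed at this stage. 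Once the $W$-orbit equation is solved, matching against \eqref{eqn:cond} to eliminate impossible $(k,\ell)$ finishes the proof.
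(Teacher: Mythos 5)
Your proposal is correct and takes essentially the same route as the paper: both reduce the nonvanishing of the Hom-space to equality of infinitesimal characters, i.e.\ $\mu\in W\eta$, then solve $w(\eta)=\mu$ across the Weyl group in coordinates and prune with the side condition \eqref{eqn:cond}. The paper just goes directly to "same infinitesimal character" rather than routing through the BGG--Verma linking theorem, and organizes the six elements of $W$ into three cases by which entry of $w(\eta)$ lands in the first slot, but these are cosmetic differences; one small imprecision in your write-up is that $\fh^*$ is realized as the trace-zero subspace of $\C^3$ rather than a quotient by $(1,1,1)$, which does not affect the argument.
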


\begin{proof}
By assumption,
the infinitesimal character of 
$N_\fp(\ell\omega_1 -(\lambda +\tfrac{3}{2}k)d\chi+\rho)^{\ga+k}$
agrees with that of $N_\fp(m\omega_1 -\lambda d\chi+\rho)^\ga$. 
Thus, there exists $w \in W$ such that
\begin{equation}\label{eqn:inf}
\ell\omega_1 -(\lambda +\tfrac{3}{2}k)d\chi+\rho
=w(m\omega_1 -\lambda d\chi+\rho).
\end{equation}

For a simple expression of both sides of \eqref{eqn:inf}, we normalize $\lambda$ by
\begin{equation*}
\wlambda:=\tfrac{2}{3}\lambda.
\end{equation*}
Then the weights $m\omega_1 -\lambda d\chi+\rho$ and 
$\ell\omega_1 -(\lambda +\tfrac{3}{2}k)d\chi+\rho$ are given in coordinates as
\begin{align*}
m\omega_1 -\lambda d\chi+\rho
&=m\omega_1 -\tfrac{3}{2}\wlambda d\chi+\rho\\
&=\tfrac{m}{2}(0, 1, -1) - \tfrac{\wlambda}{2}(2,-1,-1) + (1,0,-1)\\
&=
(1-\wlambda, \tfrac{1}{2}(\wlambda+m),\tfrac{1}{2}(\wlambda-m-2))
\end{align*}
and
\begin{align*}
\ell\omega_1 -(\lambda +\tfrac{3}{2}k)d\chi+\rho
&=\ell\omega_1 -\tfrac{3}{2}(\wlambda +k)d\chi+\rho\\
&=\tfrac{\ell}{2}(0,1,-1)-\tfrac{\wlambda+k}{2}(2,-1,-1)+(1,0,-1)\\
&=
(1-\wlambda-k, \tfrac{1}{2}(\wlambda+k+\ell), \tfrac{1}{2}(\wlambda+k-\ell-2)).
\end{align*}
Thus \eqref{eqn:inf}  amounts to
\begin{equation*}
(1-\wlambda-k, \tfrac{1}{2}(\wlambda+k+\ell), \tfrac{1}{2}(\wlambda+k-\ell-2))
=w(1-\wlambda, \tfrac{1}{2}(\wlambda+m),\tfrac{1}{2}(\wlambda-m-2)).
\end{equation*}

\noindent
Since the Weyl group element $s_{\eps_i-\eps_j}$ acts on $\fh^*\subset \C^3$
as the exchange of the $i$th entry with the $j$th entry, 
it follows from \eqref{eqn:Weyl} that one of the following cases holds.

\begin{enumerate}[(1)]

\item $1-\wlambda =1-\wlambda-k$.

\vskip 0.05in

\item $1-\wlambda = \tfrac{1}{2}(\wlambda+k+\ell)$.

\vskip 0.05in

\item $1-\wlambda = \tfrac{1}{2}(\wlambda+k-\ell-2)$.

\vskip 0.05in

\end{enumerate}
\noindent
We consider these cases separately.

\vskip 0.1in

\noindent
Case (1): 
Since $1-\wlambda =1-\wlambda-k$,
we have $k=0$.
It then follows from \eqref{eqn:cond} that  $m=\ell$. Thus,
\begin{equation*}
N_\fp(\ell\omega_1 -(\lambda +\tfrac{3}{2}k)d\chi+\rho)^{\ga+k}=
N_\fp(m\omega_1 -\lambda d\chi+\rho)^\ga.
\end{equation*}
In this case, for any $\lambda\in \C$, we have
\begin{equation}\label{eqn:Case1}
0\neq \id \in 
\Hom_{\fg, P}
(
N_\fp(m\omega_1 -\lambda d\chi+\rho)^{\ga},
N_\fp(m\omega_1 -\lambda d\chi+\rho)^\ga
),
\end{equation}
where $\id$ denotes the identity map.
\vskip 0.1in

\noindent
Case (2): 
In this case, we have either
\begin{equation}\label{eqn:2a}
\begin{aligned}
1-\wlambda &= \tfrac{1}{2}(\wlambda+k+\ell),\\
 \tfrac{1}{2}(\wlambda+m) &=1-\wlambda-k,\\
\tfrac{1}{2}(\wlambda-m-2)&=\tfrac{1}{2}(\wlambda+k-\ell-2),
\end{aligned}
\end{equation}
or
\begin{equation}\label{eqn:2b}
\begin{aligned}
1-\wlambda &= \tfrac{1}{2}(\wlambda+k+\ell),\\
 \tfrac{1}{2}(\wlambda+m) &=\tfrac{1}{2}(\wlambda+k-\ell-2),\\
\tfrac{1}{2}(\wlambda-m-2)&=1-\wlambda-k.
\end{aligned}
\end{equation}

If \eqref{eqn:2a} holds, then a direct computation shows that
\begin{equation}\label{eqn:2a2}
\begin{aligned}
\wlambda&=\frac{1}{3}(2-m-2k),\\
\ell&=m+k.
\end{aligned}
\end{equation}

If \eqref{eqn:2b} holds, then $\ell=k-m-2$. 
Since $\ell \geq 0$, this implies that $\min(k,m)=m$.
It then follows from \eqref{eqn:cond} that there exists $d \in \{0,\ldots, m\}$ such that
$\ell = k+m-2d$. Thus,
\begin{equation*}
k-m-2=\ell =k+m-2d \geq k-m,
\end{equation*}
which is a contradiction. Therefore, \eqref{eqn:2b} does not occur. 

\vskip 0.1in

\item 
Case (3): 
As in Case (2), in this case, we have either
\begin{equation}\label{eqn:3a}
\begin{aligned}
1-\wlambda &= \tfrac{1}{2}(\wlambda+k-\ell-2),\\
\tfrac{1}{2}(\wlambda+m) &=\tfrac{1}{2}(\wlambda+k+\ell),\\
\tfrac{1}{2}(\wlambda-m-2)&=1-\wlambda-k,
\end{aligned}
\end{equation}
or
\begin{equation}\label{eqn:3b}
\begin{aligned}
1-\wlambda &= \tfrac{1}{2}(\wlambda+k-\ell-2),\\
\tfrac{1}{2}(\wlambda+m)&=1-\wlambda-k,\\
\tfrac{1}{2}(\wlambda-m-2) &=\tfrac{1}{2}(\wlambda+k+\ell).
\end{aligned}
\end{equation}

If \eqref{eqn:3a} holds, then we have
\begin{equation}\label{eqn:3a2}
\begin{aligned}
\wlambda&=\frac{1}{3}(4+m-2k),\\
\ell&=m-k.
\end{aligned}
\end{equation}

If \eqref{eqn:3b} holds, then $\ell=-(m+k+2) < 0$, which contradicts 
the condition that  $\ell \geq 0$. 
Thus, this case does not happen.

The conditions \eqref{eqn:Case1}, \eqref{eqn:2a2},
and \eqref{eqn:3a2} conclude the proposition.
\end{proof}

Now we are ready to prove Theorem \ref{thm:Verma}.

\begin{proof}[Proof of Theorem \ref{thm:Verma}]
By Proposition \ref{prop:Verma}, it suffices to show that 
Condition (ii) of Theorem \ref{thm:Verma} implies \eqref{eqn:Step4},
equivalently, \eqref{eqn:HomVerma}.
Since we already checked that the identity map $\id$ exists in Condition (ii-a) in the proof of Proposition \ref{prop:Verma}, we only consider Conditions (ii-b) and (ii-c). 
We are going to show that the standard map is non-zero for these cases.
 
 \vskip 0.1in
 
\noindent
Condition (ii-b): 
As $\ell = m+k$ and $\lambda = \tfrac{1}{2}(2-m-2k)$, we have
\begin{align*}
\ell\omega_1 -(\lambda +\tfrac{3}{2}k)d\chi+\rho 
&=\tfrac{1}{3}(m-k+1, m+2k+1, -(2m+k+2)),\\
m\omega_1 -\lambda d\chi+\rho
&=\tfrac{1}{3}(m+2k+1, m-k+1, -(2m+k+2)).
\end{align*}
Thus,
\begin{equation*}
\ell\omega_1 -(\lambda +\tfrac{3}{2}k)d\chi+\rho 
=s_{\eps_1-\eps_2}(m\omega_1 -\lambda d\chi+\rho).
\end{equation*}
Since
\begin{equation*}
\IP{m\omega_1 -\lambda d\chi+\rho}{\eps_1-\eps_2}
=k \in \Z_{\geq 0},
\end{equation*}
the positive root $\eps_1-\eps_2$ links 
$m\omega_1 -\lambda d\chi+\rho$ to 
$\ell\omega_1 -(\lambda +\tfrac{3}{2}k)d\chi+\rho$.
Thus, it follows from Theorem \ref{thm:link} that 
there exists a non-zero $\fg$-homomorphism 
\begin{equation*}
\varphi \colon
N(\ell\omega_1 -(\lambda +\tfrac{3}{2}k)d\chi+\rho)
\To
N(m\omega_1 -\lambda d\chi+\rho)
\end{equation*}
between (ordinary) Verma modules
$N(\ell\omega_1 -(\lambda +\tfrac{3}{2}k)d\chi+\rho)$
and
$N(m\omega_1 -\lambda d\chi+\rho)$.
Further, clearly, $\eps_1-\eps_2$ is the only linking positive root.
Thus, by Theorem \ref{thm:Boe}, 
the standard map
\begin{equation*}
\varphi_{\std}\colon
N_\fp(\ell\omega_1 -(\lambda +\tfrac{3}{2}k)d\chi+\rho)^{\ga+k}
\to
N_\fp(m\omega_1 -\lambda d\chi+\rho)^\ga
\end{equation*}
associated with $\varphi$ is non-zero. 
 
 \vskip 0.1in

\noindent 
Condition (ii-c):
As
$\ell=m-k$ and $\lambda = \tfrac{1}{2}(4+m-2k)$,
we have
\begin{align*}
\ell\omega_1 -(\lambda +\tfrac{3}{2}k)d\chi+\rho 
&=
\tfrac{1}{3}(-(m+k+1), 2m-k+2, -m+2k-1)),\\
m\omega_1 -\lambda d\chi+\rho
&=\tfrac{1}{3}(-m+2k-1, 2m-k+2, -(m+k+1)).
\end{align*}
Thus,
\begin{equation*}
\ell\omega_1 -(\lambda +\tfrac{3}{2}k)d\chi+\rho 
=s_{\eps_1-\eps_3}(m\omega_1 -\lambda d\chi+\rho).
\end{equation*}
Since
\begin{equation*}
\IP{m\omega_1 -\lambda d\chi+\rho}{\eps_1-\eps_3}
=k \in \Z_{\geq 0},
\end{equation*}
the positive root $\eps_1-\eps_3$ links 
$m\omega_1 -\lambda d\chi+\rho$ to 
$\ell\omega_1 -(\lambda +\tfrac{3}{2}k)d\chi+\rho$.
As in \eqref{eqn:13},
we have 
$s_{\eps_1-\eps_2}s_{\eps_2-\eps_3}s_{\eps_1-\eps_2}
=s_{\eps_2-\eps_3}s_{\eps_1-\eps_2}s_{\eps_2-\eps_3}
=s_{\eps_1-\eps_3}$. However, one can check that under the condition $m-k \geq 0$,
neither $(\eps_1-\eps_2, \eps_2-\eps_3, \eps_1-\eps_2)$ nor 
$(\eps_2-\eps_3, \eps_1-\eps_2, \eps_2-\eps_3)$ links 
$m\omega_1 -\lambda d\chi+\rho$ to 
$\ell\omega_1 -(\lambda +\tfrac{3}{2}k)d\chi+\rho$.
Thus, $\eps_1-\eps_3$ is the only linking positive root.
Then, by Theorems \ref{thm:link} and \ref{thm:Boe}, 
there exists a non-zero $\fg$-homomorphism
\begin{equation*}
\varphi\colon
N(\ell\omega_1 -(\lambda +\tfrac{3}{2}k)d\chi+\rho)
\to
N(m\omega_1 -\lambda d\chi+\rho)
\end{equation*}
between the Verma modules and 
the associated standard map
\begin{equation*}
\varphi_{\std}\colon
N_\fp(\ell\omega_1 -(\lambda +\tfrac{3}{2}k)d\chi+\rho)^{\ga+k}
\to
N_\fp(m\omega_1 -\lambda d\chi+\rho)^\ga
\end{equation*}
is non-zero. Now the proposed assertion follows.
\end{proof}


\subsection{Proof of Theorems \ref{thm:IDO1} and \ref{thm:gP1}}\label{sec:IDOpf}
To finish this section, we are going to prove 
Theorems \ref{thm:IDO1} and \ref{thm:gP1}.

\begin{proof}[Proof of Theorems \ref{thm:IDO1} and \ref{thm:gP1}]
By the duality theorem, it suffices to show Theorem \ref{thm:gP1}.
The equivalence of (i) and (iii) follows from Theorem \ref{thm:Verma} 
and its preceding arguments.
To show the equivalence of (ii) and (iii), 
for $\gb-\ga=\ell-m$ and $\nu-\lambda =\frac{3}{2}|\ell-m|$,
we have 
\begin{align*}
&\dim \Hom_{\fg, P}(M_\fp(\sym_2^\ell,-\nu)^\gb, M_\fp(\sym_2^m, -\lambda)^\ga) \\
&\hspace{5cm}
\leq 
\dim \Hom_{SL(2,\R)}(S^\ell(\C^2), S^{|\ell-m|}(\C^2)\otimes S^m(\C^2)) \leq 1.
\end{align*}
Now the desired equivalence follows.
\end{proof}


\begin{rem}
If \eqref{eqn:HomVerma} holds, then the generalized Verma module
$N_\fp(m\omega_1 -\lambda d\chi+\rho)$ is reducible.
Indeed, it follows from a criterion of Bai--Xiao in \cite[Thm.\ 5.1]{BX21}
that generalized Verma modules $N_\fp(m\omega_1 -\lambda d\chi+\rho)$
for $(m,\ell; \lambda,\nu)$ satisfying \eqref{eqn:Lambda1} or 
\eqref{eqn:Lambda2} are reducible.
\end{rem}

\section{Explicit formulas of $\D$ and $\varphi$}
\label{sec:construction}


Our next aim is to give explicit formulas for intertwining differential operators
$\D$ and $(\fg, P)$-homomorphisms $\varphi$.
Those formulas are given in Theorems \ref{thm:IDO2} and \ref{thm:gP2}, respectively.


In the following,  we define
multiplication on the tensor product $A \otimes B \otimes C$
of three rings $A$, $B$, and $C$, componentwise, that is,
for $a_j \in A$, $b_j \in B$, and $c_j \in C$ for $j=1,2$, we define
$(a_1\otimes b_1 \otimes c_1)(a_2 \otimes b_2\otimes c_2)$ by
\begin{equation*}
(a_1\otimes b_1 \otimes c_1)(a_2 \otimes b_2\otimes c_2) := 
a_1a_2 \otimes b_1 b_2 \otimes c_1c_2.
\end{equation*}
If the rings in concern have unity $1$, we skip writing $1$ in the component.
For instance, we abbreviate $a \otimes b \otimes 1$ to $a \otimes b$. 

As usual, for $z, w \in \C$, 
the binomial coefficient $\binom{z}{w}$ is defined as 
\begin{equation*}
\binom{z}{w}:=
\begin{cases}
\frac{z(z-1) \cdots (z-w+1)}{w!} & \text{if $w \in 1+\Z_{\geq 0}$},\\
1 & \text{if $w=0$},\\
0 & \text{otherwise}.
\end{cases}
\end{equation*}

\subsection{Explicit formulas of $\D$}
\label{sec:main2a}
We start with the explicit formulas of intertwining differential operators 
$\D$. As in \eqref{eqn:DN}, 
we understand 
$\D \in 
\Diff_G(I(\poly_2^m, \lambda)^\ga, I(\poly_2^\ell,  \nu)^\beta)$
as a linear map
\begin{equation}\label{eqn:D51}
\D\colon C^\infty(\R^2)\otimes \Pol^m(\C^2) \To 
C^{\infty}(\R^2)\otimes \Pol^\ell(\C^2)
\end{equation}
via the diffeomorphism
\begin{equation}\label{eqn:coord}
\R^2 
\stackrel{\sim}{\To} N_-, \quad (x_1, x_2) 
\mapsto \exp(x_1 N^-_1 +x_2N^-_2).\\[3pt]
\end{equation}
In order to distinguish between $\Pol^m(\C^2)$ on the source
and $\Pol^\ell(\C^2)$ on the target in \eqref{eqn:D51}, we write
\begin{equation*}
\Pol^\ell(\C^2):=\C^m[u_1,u_2]
\quad
\text{and}
\quad
\Pol^\ell(\C^2):=\C^\ell[v_1,v_2].
\end{equation*}
Then we consider
\begin{equation}\label{eqn:D51a}
\D\colon C^\infty(\R^2)\otimes \C^m[u_1, u_2] \To 
C^{\infty}(\R^2)\otimes \C^\ell[v_1, v_2].
\end{equation}

With respect to the coordinate \eqref{eqn:coord},
the infinitesimal right translation $dR(N_j^-)$ for $j=1,2$ is given by
$dR(N_j^-) = \tfrac{\partial}{\partial x_j}$.
It then follows from the duality theorem
(Theorem \ref{thm:duality}) that 
 intertwining differential operators
 $\D$ in \eqref{eqn:D51a}
 have constant coefficients only. Thus \eqref{eqn:DN3} reads
\begin{equation}\label{eqn:D51b}
\Diff_G(I(\poly_2^m, \lambda)^\ga, I(\poly_2^\ell,  \nu)^\beta)
\subset
\C\left[\tfrac{\partial}{\partial x_1}, \tfrac{\partial}{\partial x_2}\right]
 \otimes \C^m[u_1, u_2]^\vee
 \otimes \C^\ell[v_1,v_2].
\end{equation}
Via the identification
$\C^m[u_1, u_2]^\vee 
\simeq \C^m[\tfrac{\partial}{\partial u_1}, \tfrac{\partial}{\partial u_2}]$,
we further regard \eqref{eqn:D51b} as
\begin{equation}
\Diff_G(I(\poly_2^m, \lambda)^\ga, I(\poly_2^\ell,  \nu)^\beta)
\subset
\C\left[\tfrac{\partial}{\partial x_1}, \tfrac{\partial}{\partial x_2}\right]
 \otimes \C^m\left[\tfrac{\partial}{\partial u_1}, \tfrac{\partial}{\partial u_2}\right]
 \otimes \C^\ell[v_1,v_2].\label{eqn:DC}
\end{equation}

For $ \frac{\partial^m}{\partial u_1^{m-j} \partial u_2^j} \in 
\C^m[\tfrac{\partial}{\partial u_1}, \tfrac{\partial}{\partial u_2}]$, we write
\begin{equation*}
\widetilde{\frac{\partial^m}{\partial u_1^{m-j} \partial u_2^j}}
:=\frac{1}{(m-j)!j!}\cdot \frac{\partial^m}{\partial u_1^{m-j} \partial u_2^j}.
\end{equation*}

\vskip 0.1in

For $(\ga, \gb; m,\ell; \lambda,\nu) \in \gL_1$ with $k:=\ell-m$, 
we define 
\begin{equation*}
\Cal{D}^{m+k}_{m,k} \equiv \Cal{D}_{m,\ell-m}^{\ell}  \colon C^\infty(\R^2)\otimes \C^m[u_1, u_2]
\To
C^\infty(\R^2)\otimes \C^{m+k}[v_1, v_2]
\end{equation*}
by
\begin{equation}\label{eqn:IDO1}
\Cal{D}^{m+k}_{m,k} :=
\frac{1}{m!}
\left(\tfrac{\partial}{\partial u_1}\otimes v_1 + \tfrac{\partial}{\partial u_2}\otimes v_2\right)^m
\left(\tfrac{\partial}{\partial x_1}\otimes v_1 + \tfrac{\partial}{\partial x_2}\otimes v_2\right)^k,
\end{equation}
which amounts to
\begin{equation*}
\Cal{D}^{m+k}_{m,k}
=
\sum_{j=0}^m\sum_{r=0}^k
\binom{k}{r}\frac{\partial^k}{\partial x_1^{k-r} \partial x_2^r}
\otimes \widetilde{\frac{\partial^m}{\partial u_1^{m-j}\partial u_2^j} }
\otimes v_1^{m+k-j-r}v_2^{j+r}.
\end{equation*}
For 
\begin{equation*}
f(x_1, x_2) 
= \sum_{j=0}^m f_j(x_1,x_2) 
\otimes u_1^{m-j}u_2^j \in C^\infty(\R^2)\otimes \C^m[u_1, u_2], 
\end{equation*}
we have 
\begin{equation}\label{eqn:20250811}
\Cal{D}^{m+k}_{m,k}f(x_1, x_2) = 
\sum_{j=0}^m\sum_{r=0}^k
\binom{k}{r}\frac{\partial^kf_j}{\partial x_1^{k-r} \partial x_2^r}(x_1, x_2)
\otimes v_1^{m+k-j-r}v_2^{j+r}.\\[3pt]
\end{equation}

Similarly, for $(\ga, \gb; m, \ell; \lambda,\nu) \in \gL_2$ with $k:=m-\ell$,
we define
\begin{equation*}
\Cal{D}^{m-k}_{m,k} \equiv \Cal{D}_{m,m-\ell}^{\ell} \colon C^\infty(\R^2)\otimes \C^m[u_1, u_2]
\To
C^\infty(\R^2)\otimes \C^{m-k}[v_1, v_2]
\end{equation*}
by
\begin{equation}\label{eqn:IDO2}
\Cal{D}^{m-k}_{m,k} :=
\frac{1}{m!}
\left(\tfrac{\partial}{\partial u_1}\otimes v_1 + \tfrac{\partial}{\partial u_2}\otimes v_2\right)^{m-k}
\left(\tfrac{\partial}{\partial x_1}\otimes \tfrac{\partial}{\partial u_2} - \tfrac{\partial}{\partial x_2}\otimes \tfrac{\partial}{\partial u_1}\right)^k,
\end{equation}
which amounts to
\begin{align*}
\Cal{D}^{m-k}_{m,k}
&=
\sum_{j=0}^{m-k}\sum_{r=0}^k
(-1)^r
\frac{\binom{m-k}{j}\binom{k}{k-r}}{\binom{m}{k+j-r}}
\frac{\partial^k}{\partial x_1^{k-r} \partial x_2^r}
\otimes
\widetilde{\frac{\partial^m}{\partial u_1^{m-k-j+r}\partial u_2^{k+j-r}}} 
\otimes v_1^{m-k-j}v_2^{j}\\[3pt]
&=
\sum_{p=0}^{m}\sum_{k+j-r=p}
(-1)^r
\frac{\binom{m-k}{j}\binom{k}{k-r}}{\binom{m}{p}}
\frac{\partial^k}{\partial x_1^{k-r} \partial x_2^r}
\otimes
\widetilde{\frac{\partial^m}{\partial u_1^{m-p}\partial u_2^{p}}} 
\otimes v_1^{m-k-j}v_2^{j}.
\end{align*}
For 
\begin{equation*}
f(x_1, x_2) 
:= \sum_{p=0}^m f_p(x_1,x_2) 
\otimes u_1^{m-p}u_2^p \in C^\infty(\R^2)\otimes \C^m[u_1, u_2], 
\end{equation*}
we have
\begin{equation*}
\Cal{D}^{m-k}_{m,k}f(x_1, x_2) = 
\sum_{p=0}^{m}\sum_{k+j-r=p}
(-1)^r
\frac{\binom{m-k}{j}\binom{k}{k-r}}{\binom{m}{p}}
\frac{\partial^kf_p}{\partial x_1^{k-r} \partial x_2^r}(x_1,x_2)
\otimes v_1^{m-k-j}v_2^{j}.\\[3pt]
\end{equation*}

\begin{thm}\label{thm:IDO2}
The following holds.
\begin{equation*}
\Diff_G(I(\poly_2^m, \lambda)^\ga, I(\poly_2^\ell,  \nu)^\beta)
=
\begin{cases}
\C\id & \text{if $(\ga, m, \lambda) = (\gb, \ell, \nu)$},\\[3pt]
\C\Cal{D}_{m,\ell-m}^{\ell} &\text{if $(\ga, \gb; m,\ell ; \lambda, \nu) \in \gL_1$},\\[3pt]
\C\Cal{D}_{m,m-\ell}^{\ell} &\text{if $(\ga, \gb; m,\ell ; \lambda, \nu) \in \gL_2$},\\
\{0\} &\text{otherwise}.
\end{cases}
\end{equation*}
\end{thm}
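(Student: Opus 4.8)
The plan is to leverage everything already established: Theorem \ref{thm:IDO1} tells us that the space of IDOs is at most one-dimensional and is nonzero precisely in the three listed cases, so it remains only to exhibit, in each of the cases $(\ga,m,\lambda)=(\gb,\ell,\nu)$, $(\ga,\gb;m,\ell;\lambda,\nu)\in\gL_1$, and $\in\gL_2$, a concrete nonzero element of $\Diff_G$, namely the operator claimed. The identity case is immediate: $\id$ is trivially $G$-intertwining when source and target bundles coincide. For the two nontrivial cases I would verify directly that the explicit formulas \eqref{eqn:IDO1} and \eqref{eqn:IDO2} define operators that (a) are nonzero, and (b) actually land in $\Diff_G$.

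For (a), nonvanishing, I would observe that in \eqref{eqn:IDO1} the leading symbol in the $x$-variables is $\bigl(\tfrac{\partial}{\partial x_1}\otimes v_1+\tfrac{\partial}{\partial x_2}\otimes v_2\bigr)^k$ tensored with a nonzero polynomial-coefficient piece, hence the order is exactly $k=\ell-m$ and the operator is nonzero; likewise in \eqref{eqn:IDO2} the symbol is $\bigl(\tfrac{\partial}{\partial x_1}\otimes\tfrac{\partial}{\partial u_2}-\tfrac{\partial}{\partial x_2}\otimes\tfrac{\partial}{\partial u_1}\bigr)^k$ and one checks the coefficient of, say, $\tfrac{\partial^k}{\partial x_1^k}$ against $v_1^{m-k}$ is nonzero, so the order is exactly $k=m-\ell$. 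The expansions in coordinates given just before the theorem statement make this explicit.

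For (b), the cleanest route is \emph{not} to check $G$-intertwining by hand on the group, but to use the duality theorem (Theorem \ref{thm:duality}, in the form \eqref{eqn:duality2} / \eqref{eqn:dual2}) together with what was proved in Section \ref{sec:Step4}: in Conditions (ii-b) and (ii-c) of Theorem \ref{thm:Verma} the relevant standard map $\varphi_\std$ between generalized Verma modules is nonzero, hence spans the one-dimensional $\Hom$-space, and therefore $\EuScript D(\varphi_\std)$ spans $\Diff_G$. So the task reduces to identifying $\EuScript D(\varphi_\std)$ with the stated operator. Concretely I would compute $\varphi_\std$ on the lowest-weight generator of $M_\fp(\sym_2^\ell,\cdot)$: it is obtained by projecting the ordinary Verma module homomorphism — which on generators is given by acting with a power of a negative root vector $\fn_-$ — down to the generalized Verma module. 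Under the identification $\fn_-\simeq\C^2$ with coordinates $z_1,z_2$ of \eqref{eqn:std} and the contragredient dictionary of \eqref{eqn:DN3}, raising-operator powers in $\Cal U(\fn_-)\otimes S^m(\C^2)$ translate into exactly the transvectant-type expressions \eqref{eqn:IDO1} (the "Cartan" projection, a pure power of $\sum\partial_{x_i}\otimes v_i$) and \eqref{eqn:IDO2} (the "PRV" projection, a power of the Jacobian-type operator $\partial_{x_1}\otimes\partial_{u_2}-\partial_{x_2}\otimes\partial_{u_1}$); this is the Rankin--Cohen/Gordan transvectant computation alluded to in the introduction, and it is the one step where I expect real bookkeeping. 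Alternatively, and perhaps more robustly, one can bypass the precise matching by a dimension-count argument: since $\dim\Diff_G\le 1$ in these cases, it suffices to show that $\Cal D^{m\pm k}_{m,k}$ is $\fg$-equivariant, and $\fn_-$-equivariance is automatic (constant coefficients, $dR(N_j^-)=\partial/\partial x_j$ commute), $\fm\oplus\fa$-equivariance amounts to the $MA$-covariance already recorded in Step 3 (the operator intertwines the correct $S^{m\pm k}(\C^2)_{\ga+k}\boxtimes\C$-type weight), so only the single raising direction $\fn_+$ needs to be checked, and that check reduces — via the same coordinate formulas — to a finite identity among binomial coefficients.

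The main obstacle will be the last step: matching $\EuScript D(\varphi_\std)$ with the explicit formula, i.e.\ carrying out the transvectant computation and verifying the $\fn_+$-covariance identity. This is where the combinatorial coefficients in the displayed expansions of $\Cal D^{m-k}_{m,k}$ (the ratios $\binom{m-k}{j}\binom{k}{k-r}/\binom{m}{p}$, signs $(-1)^r$) get pinned down, and where one must be careful that the parity shift $\ga\mapsto\ga+k$ from \eqref{eqn:ak} is correctly reproduced — consistent with the rigidity-of-parity remark in the introduction. Everything else is either already proved or a routine symbol computation.
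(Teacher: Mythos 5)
Your proposal is logically sound, but you overestimate where the work lies, and the paper's route is more economical precisely at the step you flag as the main obstacle. You correctly reduce, via Theorems \ref{thm:IDO1} and \ref{thm:duality}, to exhibiting a nonzero generator in each of the three cases, and you correctly observe that there are essentially two ways to verify $G$-intertwining: compute $\EuScript D(\varphi_\std)$ explicitly, or check $\fg$-equivariance directly with $\fn_+$ being the only nontrivial direction. Both would in principle succeed. But the paper does \emph{neither}. Instead it works entirely on the Verma-module side: Proposition \ref{prop:psi} exhibits the one-dimensional space of $M_0$-invariants in $\C^k[N_1^-,N_2^-]\otimes S^m(\C^2)\otimes\Pol^\ell(\C^2)$ as $\C\,\psi^\ell_{m,|m-\ell|}$, and Observation \ref{obs:41} gives the a priori containment $\Hom_P\bigl(S^\ell(\C^2)_\gb\boxtimes\C_{-\nu},\,M_\fp(\sym_2^m,-\lambda)^\ga\bigr)\subseteq\bigl(\C^k[N^-]\otimes\C^m[\partial_u]\otimes\C^\ell[v]\bigr)^{M_0}$, where the $A$-weight and $M$-parity conditions in $\gL$ single out $k=|m-\ell|$ and collapse the $MA$-invariance to $M_0$-invariance. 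Since Theorem \ref{thm:gP1} (via the nonvanishing of the standard map) says the left-hand side is nonzero, both sides are one-dimensional and coincide; the explicit $M_0$-invariant $\psi^\ell_{m,|m-\ell|}$ is therefore \emph{automatically} $P$-invariant, with no $\fn_+$-covariance check and no transvectant computation. Applying $\EuScript D$ and substituting $dR(N_j^-)=\partial/\partial x_j$ then immediately yields the stated formulas. So the ``real bookkeeping'' you anticipated (the binomial identity, or the explicit computation of $\varphi_\std$) is entirely circumvented — the dimension-count argument you mention in your alternative route should be run on the Verma side, where it buys you the $\fn_+$-invariance for free, rather than on the $\Diff_G$ side, where it leaves you with a covariance verification to do.
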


\begin{rem}\label{rem:diff}
The intertwining differential operators $\Cal{D}^{\ell}_{0,\ell}$ are constructed in 
\cite{KuOr25a}. In the cited paper, $\Cal{D}^{\ell}_{0,\ell}$ are denoted by $\D_\ell$.
(See also \cite[Remark 7.2]{Kubo24+}.)
\end{rem}


\subsection{Explicit formulas of $\varphi$}
\label{sec:main2b}
Next we consider the explicit formulas of $(\fg, P)$-homomorphisms
\begin{equation*}
\varphi \in \Hom_{\fg,P}(M_\fp(\sym_2^{\ell},-\nu)^{\gb},
M_\fp(\sym_2^m, -\lambda)^\ga)
\end{equation*}
via the identifications
\begin{align}
\Hom_{\fg,P}(M_\fp(\sym_2^{\ell},-\nu)^{\gb},
M_\fp(\sym_2^m, -\lambda)^\ga)
&\simeq 
\Hom_P(S^\ell(\C^2)_\beta \boxtimes \C_{-\nu} , 
 M_\fp(\sym_2^m, -\lambda)^\ga) \nonumber \\[3pt]
&\subset \Hom_\C(S^\ell(\C^2), S(\fn_-)\otimes S^m(\C^2)) \nonumber \\[3pt]
&\simeq S(\fn_-)\otimes S^m(\C^2)\otimes \Pol^\ell(\C^2).\label{eqn:gP52}
\end{align}

As in Section \ref{sec:main2a}, we regard
\begin{alignat*}{2}
&\Pol^\ell(\C^2) \simeq
\C^\ell[v_1, v_2],
\quad
&&S^\ell(\C^2) \simeq 
\C^{\ell}\left[\tfrac{\partial}{\partial v_1}, \tfrac{\partial}{\partial v_2}\right],\\[3pt]
&S(\fn_-)=\C[N^-_1, N^-_2],
\quad
&&S^m(\C^2) \simeq 
\C^m\left[\tfrac{\partial}{\partial u_1}, \tfrac{\partial}{\partial u_2}\right].
\end{alignat*}
We then identify
the map
\begin{equation*}
\varphi \colon S^\ell(\C^2) \To S(\fn_-) \otimes S^m(\C^2)
\end{equation*}
with
\begin{equation*}
\varphi \colon 
\C^{\ell}\left[\tfrac{\partial}{\partial v_1}, \tfrac{\partial}{\partial v_2}\right]
\To
\C[N_1^-, N_2^-]
\otimes 
\C^m\left[\tfrac{\partial}{\partial u_1}, \tfrac{\partial}{\partial u_2}\right],
\end{equation*}
which is, via \eqref{eqn:gP52}, considered as
\begin{equation*}
\varphi \in \C[N^-_1, N^-_2]
\otimes \C^m\left[\tfrac{\partial}{\partial u_1}, \tfrac{\partial}{\partial u_2}\right]
\otimes \C^\ell[v_1, v_2].
\end{equation*}
For $v_1^{\ell-p}v_2^p \in \C^\ell[v_1,v_2]$,
we write
\begin{equation*}
\widetilde{(v_1^{\ell-p}v_2^p)}:=\frac{1}{(\ell-p)! p!}\cdot v_1^{\ell-p}v_2^p.
\end{equation*}

\vskip 0.1in

For $(\ga, \gb; m,\ell; \lambda,\nu) \in \gL_1$ with $k:=\ell-m$,
we define 
\begin{equation*}
\varphi_{m,k}^{m+k} \equiv \varphi_{m,\ell-m}^{\ell} \colon 
\C^{m+k}\left[\tfrac{\partial}{\partial v_1}, \tfrac{\partial}{\partial v_2}\right]
\To
\C^k[N_1^-, N_2^-]
\otimes \C^m\left[\tfrac{\partial}{\partial u_1}, \tfrac{\partial}{\partial u_2}\right]
\end{equation*}
by
\begin{equation}\label{eqn:gP1}
\varphi_{m,k}^{m+k} :=
\frac{1}{(m+k)!}
\left(\tfrac{\partial}{\partial u_1}\otimes v_1 + \tfrac{\partial}{\partial u_2}\otimes v_2\right)^m
\left(N_1^-\otimes v_1 + N_2^-\otimes v_2\right)^k,
\end{equation}
which amounts to
\begin{align*}
\varphi_{m,k}^{m+k}
&=
\sum_{j=0}^m\sum_{r=0}^k
\frac{\binom{m}{j}\binom{k}{r}}{\binom{m+k}{j+r}}
(N_1^-)^{k-r}(N_2^-)^r
\otimes \frac{\partial^m}{\partial u_1^{m-j}\partial u_2^j}
\otimes \widetilde{\left(v_1^{m+k-j-r}v_2^{j+r}\right)}\\[3pt]
&=\sum_{p=0}^{m+k}\sum_{j+r=p}
\frac{\binom{m}{j}\binom{k}{r}}{\binom{m+k}{p}}
(N_1^-)^{k-r}(N_2^-)^r
\otimes \frac{\partial^m}{\partial u_1^{m-j}\partial u_2^j}
\otimes \widetilde{\left(v_1^{m+k-p}v_2^{p}\right)}.
\end{align*}
For 
\begin{equation*}
v:= \sum_{p=0}^{m+k} a_{p} 
\frac{\partial^{m+k}}{\partial v_1^{m+k-p} \partial v_2^{p}}
\in
\C^{m+k}\left[\tfrac{\partial}{\partial v_1}, \tfrac{\partial}{\partial v_2}\right],
\end{equation*}
we have 
\begin{equation*}
\varphi_{m,k}^{m+k}(v) = 
\sum_{p=0}^{m+k}\sum_{j+r=p}
a_p
\frac{\binom{m}{j}\binom{k}{r}}{\binom{m+k}{p}}
(N_1^-)^{k-r}(N_2^-)^r
\otimes \frac{\partial^m}{\partial u_1^{m-j}\partial u_2^j}.\\[3pt]
\end{equation*}

For $(\ga, \gb; m, \ell; \lambda,\nu) \in \gL_2$ with $k:=m-\ell$,
we define
\begin{equation*}
\varphi_{m,k}^{m-k}
\equiv
\varphi_{m,m-\ell}^{\ell} 
\colon 
\C^{m-k}\left[\tfrac{\partial}{\partial v_1}, \tfrac{\partial}{\partial v_2}\right]
\To
\C^k[N_1^-, N_2^-]
\otimes \C^m\left[\tfrac{\partial}{\partial u_1}, \tfrac{\partial}{\partial u_2}\right]
\end{equation*}
by
\begin{equation}\label{eqn:gP2}
\varphi_{m,k}^{m-k} :=
\frac{1}{(m-k)!}
\left(\tfrac{\partial}{\partial u_1}\otimes v_1 + \tfrac{\partial}{\partial u_2}\otimes v_2\right)^{m-k}
\left(N_1^-\otimes\tfrac{\partial}{\partial u_2} - N_2^-\otimes \tfrac{\partial}{\partial u_1}\right)^k,
\end{equation}
which amounts to
\begin{equation*}
\varphi_{m,k}^{m-k}
=
\sum_{j=0}^{m-k}\sum_{r=0}^k
(-1)^r
\binom{k}{r}
(N_1^-)^{k-r}(N_2^-)^r
\otimes \frac{\partial^{m}}{\partial u_1^{m-k-j+r}\partial u_2^{k+j-r}}
\otimes \widetilde{\left(v_1^{m-k-j}v_2^{j}\right)}.
\end{equation*}
For 
\begin{equation*}
v:= \sum_{j=0}^{m-k} a_{j} 
\frac{\partial^{m-k}}{\partial v_1^{m-k-j} \partial v_2^{j}}
\in
\C^{m-k}\left[\tfrac{\partial}{\partial v_1}, \tfrac{\partial}{\partial v_2}\right],
\end{equation*}
we have 
\begin{equation*}
\varphi_{m,k}^{m-k}(v)
=
\sum_{j=0}^{m-k}\sum_{r=0}^k
a_j(-1)^r 
\binom{k}{r}
(N_1^-)^{k-r}(N_2^-)^r
\otimes \frac{\partial^m}{\partial u_1^{m-k-j+r}\partial u_2^{k+j-r}}.\\[3pt]
\end{equation*}

\begin{thm}\label{thm:gP2}
The following holds.
\begin{equation*}
\Hom_{\fg,P}(M_\fp(\sym_2^{\ell},-\nu)^{\gb},
M_\fp(\sym_2^m, -\lambda)^\ga)
=
\begin{cases}
\C\id & \text{if $(\ga, m, \lambda) = (\gb, \ell, \nu)$},\\[3pt]
\C\varphi_{m,\ell-m}^{\ell} &\text{if $(\ga, \gb; m,\ell ; \lambda, \nu) \in \gL_1$},\\[3pt]
\C\varphi_{m,m-\ell}^{\ell} &\text{if $(\ga, \gb; m,\ell ; \lambda, \nu) \in \gL_2$},\\
\{0\} &\text{otherwise}.
\end{cases}
\end{equation*}
\end{thm}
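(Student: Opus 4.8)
The plan is to prove Theorem~\ref{thm:gP2} by first reducing, via the Frobenius/duality identifications of Observation~\ref{obs:41}, to the computation of an ambient space of $MA$-homomorphisms, showing that space is one-dimensional and spanned by the claimed transvectant operator, and then invoking the nonvanishing already proved in Theorem~\ref{thm:gP1} (equivalently Theorem~\ref{thm:Verma}) to conclude. In particular this route \emph{deduces} the $\fn_+$-invariance of the explicit operators instead of checking it by hand. The identity case $(\ga,m,\lambda)=(\gb,\ell,\nu)$ is immediate (the two modules coincide and $\id$ is a nonzero element of a one-dimensional $\Hom$-space), so fix parameters in $\gL_1$ or $\gL_2$ and set $k:=|\ell-m|$. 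By Observation~\ref{obs:41} one has $\Hom_{\fg,P}(M_\fp(\sym_2^{\ell},-\nu)^{\gb},M_\fp(\sym_2^m,-\lambda)^\ga)\simeq\Hom_{MA}\bigl(S^\ell(\C^2)_\gb\boxtimes\C_{-\nu},\bigl(M_\fp(\sym_2^m,-\lambda)^\ga\bigr)^{\fn_+}\bigr)\subseteq H:=\Hom_{MA}\bigl(S^\ell(\C^2)_\gb\boxtimes\C_{-\nu},S(\fn_-)\otimes S^m(\C^2)_\ga\otimes\C_{-\lambda}\bigr)$.

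The first step is to show $\dim H\leq 1$. Decomposing $S(\fn_-)=\bigoplus_{k'\geq 0}S^{k'}(\fn_-)$ as in Observation~\ref{obs:41}, a nonzero $MA$-map into the $k'$-th summand must match $\fa$-weights, forcing $\nu-\lambda=\tfrac32 k'$; since for $(\ga,\gb;m,\ell;\lambda,\nu)\in\gL_1\cup\gL_2$ one has $\nu-\lambda=\tfrac32 k$, only $k'=k$ contributes. Moreover $\Hom_{MA}(\,\cdot\,,S^{k}(\fn_-)\otimes S^m(\C^2)_\ga\otimes\C_{-\lambda})$ embeds into $\Hom_{SL(2,\R)}(S^\ell(\C^2),S^{k}(\C^2)\otimes S^m(\C^2))$, which is at most one-dimensional by Proposition~\ref{prop:Step1} (Clebsch--Gordan). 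Hence $\dim H\leq 1$.

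The second step is to check that $\varphi_{m,k}^{m\pm k}$ of \eqref{eqn:gP1}--\eqref{eqn:gP2} is a nonzero element of $H$. By construction the building blocks are the classical $SL(2,\R)$-invariant contractions on $\C^2$: $\tfrac{\partial}{\partial u_1}\otimes v_1+\tfrac{\partial}{\partial u_2}\otimes v_2$ realizes the identity endomorphism, and $N_1^-\otimes\tfrac{\partial}{\partial u_2}-N_2^-\otimes\tfrac{\partial}{\partial u_1}$ the skew form; raising them to the relevant powers yields exactly the projector onto the Cartan component $S^{m+k}(\C^2)$, resp.\ the PRV component $S^{m-k}(\C^2)$, of \eqref{eqn:CG}, which here equals $S^\ell(\C^2)$. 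Since $N_j^-$ transforms under $M\simeq SL^{\pm}(2,\R)$ through $\sgn\otimes\std$ (as recorded before Proposition~\ref{prop:Step2}) and $\wH_0$ acts on $S^k(\fn_-)$ by $-\tfrac32 k$, the $k$-th powers carry the character $\sgn^k$ and shift the $\fa$-weight by $-\tfrac32 k$; these are matched precisely by the defining conditions $\gb-\ga=k$ and $\nu-\lambda=\tfrac32 k$ of $\gL_1,\gL_2$, so $\varphi_{m,k}^{m\pm k}$ is a bona fide $MA$-homomorphism landing in the $S^k(\fn_-)$-graded piece, hence in $H$. It is nonzero: evaluating on the element dual to $\tfrac{\partial^{m\pm k}}{\partial v_1^{m\pm k}}$ produces the monomial $(N_1^-)^k\otimes\tfrac{\partial^m}{\partial u_1^m}$, resp.\ $(N_1^-)^k\otimes\tfrac{\partial^m}{\partial u_1^{m-k}\partial u_2^k}$ (legitimate since $k\leq m$ when $\ell=m-k\geq 0$), with coefficient $1$ and no cancellation. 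Therefore $H=\C\varphi_{m,k}^{m\pm k}$; and since Theorem~\ref{thm:gP1} (equivalently Theorem~\ref{thm:Verma}) gives $\Hom_{\fg,P}(M_\fp(\sym_2^{\ell},-\nu)^{\gb},M_\fp(\sym_2^m,-\lambda)^\ga)\neq\{0\}$, this nonzero subspace of the one-dimensional $H$ must equal $H=\C\varphi_{m,k}^{m\pm k}$, which finishes the proof and incidentally shows $\varphi_{m,k}^{m\pm k}$ takes values in the $\fn_+$-invariants.

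The main obstacle I anticipate is the bookkeeping in the second step: keeping the three different dual identifications $S^m(\C^2)\simeq\C^m[\tfrac{\partial}{\partial u}]$, $\Pol^\ell(\C^2)\simeq\C^\ell[v]$, $S(\fn_-)=\C[N^-]$, together with the two sign characters $\C_\ga,\C_\gb$ of $M\simeq SL^{\pm}(2,\R)$, consistent, so that the parity condition ``$\gb-\ga=k$'' and the conformal-weight condition ``$\nu-\lambda=\tfrac32 k$'' fall out exactly from the invariant-theoretic shape of the operator. What is usually the delicate point here---the $\fn_+$-invariance, i.e.\ the conformal-weight computation---is handed to us for free by Theorems~\ref{thm:gP1} and \ref{thm:Verma}. (Alternatively one could transport the explicit operators of Theorem~\ref{thm:IDO2} through the duality isomorphism $\EuScript{D}$ of Theorem~\ref{thm:duality} via \eqref{eqn:HD}; the argument above avoids that pairing computation and is self-contained.)
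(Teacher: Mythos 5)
Your proposal is correct and follows essentially the same route as the paper's own proof: reduce via Observation~\ref{obs:41} to an ambient space of $MA$-homomorphisms, use Clebsch--Gordan (Proposition~\ref{prop:Step1}) to bound its dimension by one, exhibit the explicit transvectant $\varphi_{m,k}^{m\pm k}$ as a nonzero element of that space (the paper packages this as Proposition~\ref{prop:psi} and the surrounding identifications), and then invoke the nonvanishing already established in Theorem~\ref{thm:gP1}/Theorem~\ref{thm:Verma} to deduce equality and, as a by-product, $\fn_+$-invariance. The only cosmetic difference is that you verify the $M$-character (the $\sgn^k$ twist) and nonvanishing of $\varphi_{m,k}^{m\pm k}$ directly by inspecting a leading coefficient, whereas the paper records the $SL(2,\R)$-invariance in Proposition~\ref{prop:psi} and absorbs the parity into the equality \eqref{eqn:P53}; both are the same computation seen from slightly different angles.
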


\subsection{Proofs of the explicit formulas}
\label{sec:proof2}
We show the explicit formulas of 
\eqref{eqn:IDO1},
\eqref{eqn:IDO2},
\eqref{eqn:gP1},
and \eqref{eqn:gP2},
uniformly.
To the end,  we first construct
\begin{align*}
0\neq \psi
&\in \Hom_{SL(2,\R)}(S^{m+k-2d}(\C^2), S^k(\C^2)\otimes S^m(\C^2))\\[3pt]
&\simeq \left(S^k(\C^2) \otimes S^m(\C^2) \otimes 
\Pol^{m+k-2d}(\C^2)\right)^{SL(2,\R)}
\end{align*}
for $d \in \{0, \ldots, \min(k,m)\}$.

Let $\eps_j \in (\C^2)^\vee$ denote 
the dual basis of $e_j\in \C^2$ for $j=1,2$, namely, $\eps_j(e_i) = \delta_{i,j}$.
We regard $\eps_j$ as the dual basis of $N_j^-$ as well.
Write
\begin{equation*}
S^k(\C^2) = \C^k[N_1^-, N_2^-],
\quad
S^m(\C^2)=\C^m[e_1, e_2],
\quad
\Pol^{m+k-2d}(\C^2)=\C^{m+k-2d}[\eps_1,\eps_2].
\end{equation*}
Then we wish to construct
\begin{equation*}
0\neq \psi \in 
\left(\C^k[N_1^-, N_2^-] \otimes \C^m[e_1, e_2] \otimes 
\C^{m+k-2d}[\eps_1, \eps_2]\right)^{SL(2,\R)}.\\[3pt]
\end{equation*}
As
$\dim 
\left(\C^k[N_1^-, N_2^-] \otimes \C^m[e_1, e_2] \otimes 
\C^{m+k-2d}[\eps_1, \eps_2]\right)^{SL(2,\R)}=1$ (see \eqref{eqn:CG}),
we have 
\begin{equation*}
\left(\C^k[N_1^-, N_2^-] \otimes \C^m[e_1, e_2] \otimes 
\C^{m+k-2d}[\eps_1, \eps_2]\right)^{SL(2,\R)}
=\C\psi
\end{equation*}
for some $\psi \neq 0$.

We write
\begin{equation*}
N^-:=(N_1^-, N_2^-),
\quad
e:=(e_1, e_2),
\quad
\eps:=(\eps_1, \eps_2).
\end{equation*}
Then define 
\begin{equation*}
\psi^{m+k-2d}_{m,k}(N^-, e, \eps)
\in
\C^k[N_1^-, N_2^-] \otimes \C^m[e_1, e_2] \otimes 
\C^{m+k-2d}[\eps_1, \eps_2]
\end{equation*}
by
\begin{equation}\label{eqn:psi}
\psi^{m+k-2d}_{m,k}(N^-, e, \eps)
:=
(N_1^-\otimes \eps_1 + N_2^-\otimes \eps_2)^{k-d}
(e_1\otimes \eps_1 + e_2\otimes \eps_2)^{m-d}
(N_1^-\otimes e_2 - N_2^-\otimes e_1)^{d}.\\[3pt]
\end{equation}

\begin{prop}\label{prop:psi}
For $d \in \{0, \ldots, \min(k,m)\}$,
we have 
\begin{equation*}
\left(\C^k[N_1^-, N_2^-] \otimes \C^m[e_1, e_2] \otimes 
\C^{m+k-2d}[\eps_1, \eps_2]\right)^{SL(2,\R)}
=\C\psi^{m+k-2d}_{m,k}(N^-, e, \eps).
\end{equation*}
\end{prop}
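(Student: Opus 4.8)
The plan is to leverage the fact, already recorded through the Clebsch--Gordan formula \eqref{eqn:CG}, that the space $\bigl(\C^k[N_1^-, N_2^-] \otimes \C^m[e_1, e_2] \otimes \C^{m+k-2d}[\eps_1, \eps_2]\bigr)^{SL(2,\R)}$ is one-dimensional. Consequently it suffices to check two things about $\psi^{m+k-2d}_{m,k}(N^-, e, \eps)$: that it lies in this invariant line, and that it is nonzero. The degree bookkeeping for membership is immediate: the factor $(N_1^-\otimes\eps_1 + N_2^-\otimes\eps_2)^{k-d}$ contributes degrees $(k-d, 0, k-d)$ in $(N^-, e, \eps)$, the factor $(e_1\otimes\eps_1 + e_2\otimes\eps_2)^{m-d}$ contributes $(0, m-d, m-d)$, and $(N_1^-\otimes e_2 - N_2^-\otimes e_1)^{d}$ contributes $(d, d, 0)$; the exponents $k-d$, $m-d$, $d$ are all nonnegative precisely because $d\in\{0,\dots,\min(k,m)\}$, and the total degrees sum to $k$, $m$, and $m+k-2d$, as required.

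For invariance I would argue factor by factor. Using \eqref{eqn:std}, identify the span of $N^-=(N_1^-,N_2^-)$ and the span of $e=(e_1,e_2)$ each with the standard module $(\std,\C^2)$ of $SL(2,\R)$, and the span of $\eps=(\eps_1,\eps_2)$ with the contragredient $(\C^2)^\vee$. Under these identifications, $N_1^-\otimes\eps_1 + N_2^-\otimes\eps_2$ and $e_1\otimes\eps_1 + e_2\otimes\eps_2$ are each the canonical invariant element of $\C^2\otimes(\C^2)^\vee$ (the one corresponding to the identity map on $\C^2$), while $N_1^-\otimes e_2 - N_2^-\otimes e_1$ spans the line $\Exterior^2\C^2\subset\C^2\otimes\C^2$, which is $SL(2,\R)$-stable since $SL(2,\R)$ preserves the determinant form. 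A product of invariants is invariant with respect to the componentwise multiplication fixed at the beginning of Section~\ref{sec:construction}, so $\psi^{m+k-2d}_{m,k}(N^-, e, \eps)$ is $SL(2,\R)$-invariant.

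For nonvanishing I would observe that the triple tensor product $\C^k[N_1^-, N_2^-] \otimes \C^m[e_1, e_2] \otimes \C^{m+k-2d}[\eps_1, \eps_2]$ is the graded piece of multidegree $(k,m,m+k-2d)$ of the polynomial ring $\C[N_1^-, N_2^-, e_1, e_2, \eps_1, \eps_2]$, with the componentwise multiplication matching ordinary polynomial multiplication; this ring is an integral domain, and each of the three factors $N_1^-\eps_1 + N_2^-\eps_2$, $e_1\eps_1 + e_2\eps_2$, $N_1^- e_2 - N_2^- e_1$ is a nonzero polynomial, so their product is nonzero. Since a nonzero vector in a one-dimensional space spans it, this yields the claim. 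I expect the only point that genuinely needs care is the bookkeeping in the invariance step --- correctly matching the three ``bracket'' expressions with the standard and contragredient modules via \eqref{eqn:std} and getting the sign in the $\Exterior^2$ factor right; everything else is formal. (One could replace the integral-domain argument by evaluating $\psi^{m+k-2d}_{m,k}$ at a convenient point, e.g. $N^-=e_1$, $e=e_2$, but the domain argument avoids any computation.)
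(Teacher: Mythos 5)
Your proof is correct and follows essentially the same route as the paper's: use one-dimensionality from Clebsch--Gordan, check multidegree membership, and verify invariance factor by factor by identifying $N_1^-\otimes\eps_1+N_2^-\otimes\eps_2$ and $e_1\otimes\eps_1+e_2\otimes\eps_2$ with the canonical invariant in $\C^2\otimes(\C^2)^\vee$ and $N_1^-\otimes e_2-N_2^-\otimes e_1$ with the determinant/$\Exterior^2$ invariant. The only addition over the paper's proof is your explicit integral-domain argument for $\psi^{m+k-2d}_{m,k}\neq 0$, a point the paper leaves implicit; that is a reasonable small completion, not a different method.
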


\begin{proof}
It is clear from \eqref{eqn:psi} that
\begin{equation*}
\psi^{m+k-2d}_{m,k}(N^-, e, \eps)
\in 
\C^k[N_1^-, N_2^-] \otimes \C^m[e_1, e_2] \otimes 
\C^{m+k-2d}[\eps_1, \eps_2].
\end{equation*}
Thus, it suffices to show that 
$\psi^{m+k-2d}_{m,k}(N^-, e, \eps)$ is $SL(2,\R)$-invariant.

Observe that as 
\begin{equation*}
\left(\C^2\otimes (\C^2)^\vee\right)^{SL(2,\R)}
\simeq
\Hom_{SL(2,\R)}(\C^2, \C^2)
=\C \id,
\end{equation*}
we have 
\begin{equation*}
e_1 \otimes \eps_1 + e_2 \otimes \eps_2 \in 
\left(\C^2\otimes (\C^2)^\vee\right)^{SL(2,\R)}.
\end{equation*}
Therefore,
\begin{equation}\label{eqn:psi1}
(e_1 \otimes \eps_1 + e_2 \otimes \eps_2)^n \in 
\left(\C^n[e_1, e_2]\otimes \C^n[\eps_1,\eps_2]\right)^{SL(2,\R)}.\\[3pt]
\end{equation}

On the other hand, it follows from the $SL(2,\R)$-invariance of the determinant
$
\det\begin{pmatrix} x & w \\ y & z \end{pmatrix}
=xz-wy
$
that we have 
\begin{equation*}
N_1^- \otimes e_2-N_2^-\otimes e_1
\in 
\left(\fn_-\otimes \C^2\right)^{SL(2,\R)},
\end{equation*}
which yields 
\begin{equation}\label{eqn:psi2}
(N_1^- \otimes e_2-N_2^-\otimes e_1)^n
\in \left(\C^n[N_1^-,N_2^-]\otimes \C^n[e_1,e_2]\right)^{SL(2,\R)}.\\[3pt]
\end{equation}

Now the $SL(2,\R)$-invariance of $\psi^{m+k-2d}_{m,k}(N^-, e, \eps)$ 
follows from \eqref{eqn:psi1} and \eqref{eqn:psi2}.
\end{proof}

We write
\begin{equation*}
\partial_u:=\left(\tfrac{\partial}{\partial u_1}, \tfrac{\partial}{\partial u_2}\right)
\quad \text{and} \quad
v:=(v_1, v_2).
\end{equation*}
Then, under the identifications $M_0 \simeq SL(2,\R)$ and
\begin{alignat*}{2}
\C^m[e_1, e_2] &\simeq S^m(\C^2)&&\simeq \C^m\left[\tfrac{\partial}{\partial u_1}, \tfrac{\partial}{\partial u_2}\right],\\[5pt]
\C^{m+k-2d}[\eps_1, \eps_2]&\simeq \Pol^{m+k-2d}(\C^2)&&\simeq 
\C^{m+k-2d}[v_1, v_2],
\end{alignat*}
Proposition \ref{prop:psi} shows that
\begin{equation*}
\left(\C^k[N_1^-, N_2^-] \otimes 
\C^m\left[\tfrac{\partial}{\partial u_1}, \tfrac{\partial}{\partial u_2}\right]
\otimes 
\C^{m+k-2d}[v_1, v_2]\right)^{M_0}
=\C\psi^{m+k-2d}_{m,k}(N^-,\partial_u,v).\\[3pt]
\end{equation*}
In particular, for $d=0$, we have 
\begin{align*}
\C
\psi^{m+k}_{m,k}(N^-,\partial_u,v)
&=
\left(\C^k[N_1^-, N_2^-] \otimes 
\C^m\left[\tfrac{\partial}{\partial u_1}, \tfrac{\partial}{\partial u_2}\right]
\otimes 
\C^{m+k}[v_1, v_2]\right)^{M_0}
\end{align*}
with
\begin{equation*}
\psi^{m+k}_{m,k}(N^-,\partial_u,v)
=
\left(\tfrac{\partial}{\partial u_1}\otimes v_1 + \tfrac{\partial}{\partial u_2}\otimes v_2\right)^m
\left(N_1^-\otimes v_1 + N_2^-\otimes v_2\right)^k.
\end{equation*}

\vskip 0.1in

For $d=k$ with $m\geq k$, we have 
\begin{align*}
\C
\psi^{m-k}_{m,k}(N^-,\partial_u,v)
&=
\left(\C^k[N_1^-, N_2^-] \otimes 
\C^m\left[\tfrac{\partial}{\partial u_1}, \tfrac{\partial}{\partial u_2}\right]
\otimes 
\C^{m-k}[v_1, v_2]\right)^{M_0}
\end{align*}
with
\begin{equation*}
\psi^{m-k}_{m,k}(N^-,\partial_u,v)
=
\left(\tfrac{\partial}{\partial u_1}\otimes v_1 + \tfrac{\partial}{\partial u_2}\otimes v_2\right)^{m-k}
\left(N_1^-\otimes\tfrac{\partial}{\partial u_2} - N_2^-\otimes \tfrac{\partial}{\partial u_1}\right)^k.\\[3pt]
\end{equation*}

Now we are ready to prove Theorems \ref{thm:IDO2} and \ref{thm:gP2}.

\begin{proof}[Proofs of Theorems \ref{thm:IDO2} and \ref{thm:gP2}]
By \eqref{eqn:gP52}, we have 
\begin{equation*}
\Hom_P(S^\ell(\C^2)_\beta \boxtimes \C_{-\nu} , 
 M_\fp(\sym_2^m, -\lambda)^\ga)
\subset
\C[N^-_1, N^-_2]
\otimes \C^m\left[\tfrac{\partial}{\partial u_1}, \tfrac{\partial}{\partial u_2}\right]
\otimes \C^\ell[v_1, v_2].
\end{equation*}
Further,
Theorem \ref{thm:gP1} and Observation \ref{obs:41} show that,
for $(\ga, \gb; m,\ell; \lambda,\nu) \in \gL$ with $k:=|m-\ell|$, we have 
\begin{equation}
\Hom_P(S^\ell(\C^2)_\beta \boxtimes \C_{-\nu} , 
 M_\fp(\sym_2^m, -\lambda)^\ga)
 =
\left(\C^k[N^-_1, N^-_2]
\otimes \C^m\left[\tfrac{\partial}{\partial u_1}, \tfrac{\partial}{\partial u_2}\right]
\otimes \C^\ell[v_1, v_2]\right)^{M_0}. \label{eqn:P53}\\[3pt]
\end{equation}
Since
\begin{equation*}
\varphi^{m+k}_{m,k}=\frac{1}{(m+k)!}\psi^{m+k}_{m,k}(N^-,\partial_u,v)
\quad
\text{and}
\quad \varphi^{m-k}_{m,k}=\frac{1}{(m-k)!}\psi^{m-k}_{m,k}(N^-,\partial_u,v),
\end{equation*}
the arguments preceding to this proof conclude Theorem \ref{thm:gP2}.

Theorem \ref{thm:IDO2} just follows from 
Theorems \ref{thm:duality} 
and \ref{thm:gP2}. Indeed, by these theorems, we have 
\begin{align*}
\EuScript{D}
\colon
\Hom_P(S^\ell(\C^2)_\beta \boxtimes \C_{-\nu} , 
 M_\fp(\sym_2^m, -\lambda)^\ga)
&\stackrel{\sim}{\to}
\Diff_G(I(\poly_2^m, \lambda)^\ga, I(\poly_2^\ell,  \nu)^\beta),\\[3pt]
\psi^{\ell}_{m,|m-\ell|}(N^-,\partial_u,v) 
&\mapsto
\psi^{\ell}_{m,|m-\ell|}(dR(N^-),\partial_u,v).
\end{align*}
As $dR(N_j^-) = \tfrac{\partial}{\partial x_j}$ for $j=1,2$,
the differential operators $\psi^{\ell}_{m,|m-\ell|}(dR(N^-),\partial_u,v)$ are given by
\begin{equation*}
\psi^{\ell}_{m,|m-\ell|}(dR(N^-),\partial_u,v)=
\psi^{m+k}_{m,k}(\partial_x,\partial_u,v),
\end{equation*}
where 
$\partial_x:=\left(\tfrac{\partial}{\partial x_1}, \tfrac{\partial}{\partial x_2}\right)$, 
Since 
\begin{equation*}
\Cal{D}^{m+k}_{m,k}=\frac{1}{m!}\psi^{m+k}_{m,k}(\partial_x,\partial_u,v)
\quad
\text{and}
\quad \Cal{D}^{m-k}_{m,k}=\frac{1}{m!}\psi^{m-k}_{m,k}(\partial_x,\partial_u,v),
\end{equation*}
the desired assertion holds.
\end{proof}

\subsection{Cartan operators and PRV operators}
\label{sec:comp}
In Theorem \ref{thm:IDO2}, we constructed two families
$\Cal{D}^{m+k}_{m,k}$ and $\Cal{D}^{m-k}_{m,k}$ of differential operators. 
To end this section, we study a relationship between
$\Cal{D}^{m+k}_{m,k}$ and $\Cal{D}^{m-k}_{m,k}$.
In order to distinguish between them,
we write
\begin{equation}\label{eqn:CPdef}
\Cal{C}^{m+k}_{m,k}:=\Cal{D}^{m+k}_{m,k}
\quad
\text{and}
\quad
\Cal{P}^{m-k}_{m,k}:=\Cal{D}^{m-k}_{m,k}.
\end{equation}

As remarked in Remark \ref{rem:CPRV}, the operators 
$\Cal{C}^{m+k}_{m,k}$ and $\Cal{P}^{m-k}_{m,k}$ are obtained 
from the Cartan component and PRV component of 
the tensor product of two irreducible finite-dimensional representations of $SL(2,\R)$,
respectively. We then call $\Cal{C}^{m+k}_{m,k}$ and $\Cal{P}^{m-k}_{m,k}$
\emph{Cartan operators} and \emph{PRV operators}, respectively.
We remark that $\Cal{C}^{k}_{0,k}$ are also regarded as PRV operators
as there is no tensor product decomposition in the case (see Remark \ref{rem:CPRV}).

To simplify notation, we write
\begin{equation}\label{eqn:Ind}
I(m,\lambda)^\ga:=I(\poly^m_2,\lambda)^\ga.
\end{equation}
Then, by Theorem \ref{thm:IDO2}, we have
\begin{equation}\label{eqn:Cartan}
\Cal{C}^{m+k+1}_{m,k+1}\colon I(m, -\tfrac{1}{2}(m+2k))^\ga
\To
I(m+k+1, \tfrac{1}{2}(3-m+k))^{\ga+k+1}
\end{equation}
and
\begin{equation}\label{eqn:PRV}
\Cal{P}^{m-k-1}_{m,k+1}\colon I(m, \tfrac{1}{2}(2+m-2k))^\ga
\To
I(m-k-1, \tfrac{1}{2}(5+m+k))^{\ga+k+1},
\end{equation}
where, for $\ga \in \{\pm\}\equiv \{\pm 1\}$ 
and $n \in \Z_{\geq 0}$, we define
$\ga + n \in \{\pm\}$ as in \eqref{eqn:ak}.
We denote the kernel and image of
 $\D \in \{\Cal{C}^{m+k+1}_{m,k+1}, \Cal{P}^{m-k-1}_{m,k+1}\}$
in \eqref{eqn:Cartan} 
and \eqref{eqn:PRV}  by 
\begin{equation*}
\Ker(\D)^\ga
\quad
\text{and}
\quad
\Im(\D)^{\ga+k+1}.
\end{equation*}

We put
\begin{equation*}
\text{Cartan}:=\{\Cal{C}^{m+k+1}_{m,k+1}: m, k \in \Z_{\geq 0}\}
\quad
\text{and}
\quad
\text{PRV}:=\{\Cal{P}^{m-k-1}_{m,k+1}: m, k, m-k-1 \in \Z_{\geq 0}\}.
\end{equation*}

\begin{thm}\label{thm:CPRV}
There exists a bijection
\begin{equation*}
\Theta\colon \Cartan \To \PRV
\end{equation*}
such that
\begin{equation*}
\Im\left(\Cal{C}^{m+k+1}_{m,k+1}\right)^{\ga+k+1} 
\subset 
\Ker\left(\Theta(\Cal{C}^{m+k+1}_{m,k+1})\right)^{\ga+k+1}.
\end{equation*}
\end{thm}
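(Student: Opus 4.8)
The plan is to let $\Theta$ assign to each Cartan operator the unique PRV operator whose source bundle is the target bundle of the given Cartan operator, and then to argue that the resulting composite of the two operators must vanish because Theorem~\ref{thm:IDO1} forbids nonzero intertwining differential operators between the relevant pair of bundles. First I would define $\Theta$. By \eqref{eqn:Cartan}, $\Cal{C}^{m+k+1}_{m,k+1}$ acts from $I(\poly_2^m,-\tfrac12(m+2k))^\ga$ into the bundle $I(\poly_2^{m+k+1},\tfrac12(3-m+k))^{\ga+k+1}$. Comparing with \eqref{eqn:PRV}, a PRV operator $\Cal{P}^{M-K-1}_{M,K+1}$ has source degree $M$ and source parameter $\tfrac12(2+M-2K)$, and these equal $m+k+1$ and $\tfrac12(3-m+k)$ precisely when $M=m+k+1$ and $K=m$. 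Accordingly I set $\Theta(\Cal{C}^{m+k+1}_{m,k+1}):=\Cal{P}^{k}_{m+k+1,\,m+1}$, which is a genuine PRV operator since $m+k+1,\ m,\ (m+k+1)-m-1=k\in\Z_{\geq0}$.

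Next I would check bijectivity. The assignment $(m,k)\mapsto(M,K)=(m+k+1,m)$ maps $\Z_{\geq0}^2$ onto $\{(M,K)\in\Z_{\geq0}^2:M-K-1\geq0\}$ bijectively, with inverse $(M,K)\mapsto(m,k)=(K,\,M-K-1)$. Since these two index sets faithfully parametrize $\Cartan$ and $\PRV$ respectively, $\Theta\colon\Cartan\to\PRV$ is a bijection.

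The core of the proof is the vanishing $\Theta(\Cal{C}^{m+k+1}_{m,k+1})\circ\Cal{C}^{m+k+1}_{m,k+1}=0$. By the choice of $M,K$, the target bundle of $\Cal{C}^{m+k+1}_{m,k+1}$ (starting from parity $\ga$) is literally the source bundle of $\Cal{P}^{k}_{m+k+1,m+1}$ (starting from parity $\ga+k+1$), so the composite is a well-defined intertwining differential operator, and by \eqref{eqn:PRV} it lies in
\begin{equation*}
\Diff_G\bigl(I(\poly_2^m,-\tfrac12(m+2k))^\ga,\;I(\poly_2^{k},\tfrac12(6+2m+k))^{\ga+m+k}\bigr),
\end{equation*}
parities read modulo $2$. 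Now, reading off $\gL_1$ and $\gL_2$ from \eqref{eqn:Lambda1}--\eqref{eqn:Lambda2} (and including case (iii-a) of Theorem~\ref{thm:IDO1}, where both parameters and both degrees coincide) yields the necessary condition: if $\Diff_G(I(\poly_2^{m'},\lambda)^{\ga'},I(\poly_2^{\ell'},\nu)^{\gb'})\neq\{0\}$ then $\nu-\lambda=\tfrac32\,|\ell'-m'|$. For the composite above one has $|\ell'-m'|=|k-m|$ but $\nu-\lambda=\tfrac12(6+2m+k)+\tfrac12(m+2k)=3+\tfrac32(m+k)$, and since $\tfrac32|k-m|\leq\tfrac32(m+k)<3+\tfrac32(m+k)$, that condition fails. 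Hence the displayed $\Diff_G$-space is $\{0\}$, so the composite, being an element of it, vanishes; equivalently $\Im(\Cal{C}^{m+k+1}_{m,k+1})^{\ga+k+1}\subset\Ker(\Theta(\Cal{C}^{m+k+1}_{m,k+1}))^{\ga+k+1}$, which is the assertion.

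The argument has no deep difficulty; the only thing requiring genuine care is keeping the inducing parameters $\lambda$ and the $\Z/2\Z$-parities consistent through both operators, so that the source and target bundles of the composite are pinned down exactly before Theorem~\ref{thm:IDO1} is invoked. As alternatives, one could instead establish the vanishing by a direct computation with the explicit formulas \eqref{eqn:IDO1}--\eqref{eqn:IDO2}, or deduce it from the $d^2=0$ property of the BGG complex once the Cartan and PRV operators are identified with the first two BGG operators in Section~\ref{sec:BGG}; the route through Theorem~\ref{thm:IDO1} sidesteps both.
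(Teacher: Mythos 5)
Your proof is correct and follows essentially the same route as the paper: define $\Theta$ by matching the target bundle of $\Cal{C}^{m+k+1}_{m,k+1}$ with the source bundle of a PRV operator via $(m,k)\mapsto(m+k+1,m)$, verify the inverse, and then observe that the composite is a $G$-intertwining differential operator between two bundles for which the classification rules out any nonzero operator. The only cosmetic difference is that you extract from Theorem~\ref{thm:IDO1} the single necessary condition $\nu-\lambda=\tfrac32|\ell-m|$ and check it fails (since $\nu'-\lambda'=3+\tfrac32(m+k)>\tfrac32|k-m|$), whereas the paper simply cites Theorem~\ref{thm:IDO2} to assert $\Diff_G(I(m,-\tfrac12(m+2k))^\ga,I(k,\tfrac12(6+2m+k))^{\ga+m+k})=\{0\}$; your version is slightly more self-contained but reaches the same conclusion by the same mechanism.
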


\begin{proof}
Put 
\begin{equation*}
A:=
\begin{pmatrix}
1 & 1 & 1 \\
1 & 0 & 0\\
0 & 0& 1
\end{pmatrix}
\in GL(2,\R) \ltimes \R^2.
\end{equation*}
Then, for $m,k \in \Z_{\geq 0}$, we set
\begin{equation}\label{eqn:mk}
(\wm, \wk):=
(m+k+1, m)=
A
(m,k, 1)^t,
\end{equation}
where the space $\R^2$ is realized as 
an affine subspace of $\R^3$ under the identification
$(a, b)^t \simeq (a, b, 1)^t$.
Then define 
\begin{equation*}
\Theta(\Cal{C}^{m+k+1}_{m,k+1}):=
\Cal{P}^{\wm-\wk-1}_{\wm,\wk+1}.
\end{equation*}
We show that $\Theta$ satisfies the desired properties.
\vskip 0.1in

(1) Well-definedness: As  $\wm=m+k+1\geq m+1 =\wk+1$,
the PRV operator 
\begin{equation}\label{eqn:P}
\Cal{P}^{\wm-\wk-1}_{\wm,\wk+1}\colon 
I(\wm, \tfrac{1}{2}(2+\wm-2\wk))^{\ga}
\To
I(\wm-\wk-1, \tfrac{1}{2}(5+\wm+\wk))^{\ga+\wk+1}
\end{equation}
is well-defined. Thus, $\Theta(\Cal{C}^{m+k+1}_{m,k+1}) \in \PRV$.
\vskip 0.1in

(2) Inverse: 
Observe that 
the inverse $A^{-1}$ of the matrix $A$  is 
\begin{equation*}
A^{-1}=
\begin{pmatrix}
0 & 1 & 0 \\
1 & -1 & -1\\
0 & 0& 1
\end{pmatrix}.
\end{equation*}
Then, for $m,k \in \Z_{\geq 0}$ with $m-k-1 \in \Z_{\geq 0}$,
we set
\begin{equation}\label{eqn:mk2}
(\widehat{m}, \widehat{k}):=(k, m-k-1)
=A^{-1}(m,k,1)^t.
\end{equation}
By the choice of $m,k$, clearly we have $\widehat{m}, \widehat{k} \in \Z_{\geq 0}$.
Thus the Cartan operator 
\begin{equation*}
\Cal{C}^{\widehat{m}+\widehat{k}+1}_{\widehat{m},\widehat{k}+1}\colon 
I(\widehat{m}, -\tfrac{1}{2}(\widehat{m}+2\widehat{k}))^{\ga}
\To
I(\widehat{m}+\widehat{k}+1, 
\tfrac{1}{2}(3-\widehat{m}+\widehat{k}))^{\ga+\widehat{k}+1}
\end{equation*}
is well-defined. We then define 
$\Theta^{-1}(\Cal{P}^{m-k-1}_{m,k+1}):=
\Cal{C}^{\widehat{m}+\widehat{k}+1}_{\widehat{m},\widehat{k}+1}$.
By \eqref{eqn:mk} and \eqref{eqn:mk2},
the maps  $\Theta$ and $\Theta^{-1}$ are clearly inverses to each other.
\vskip 0.1in

(3) $\Im \subset \Ker$: 
By \eqref{eqn:mk}, we have
\begin{align*}
\Cal{P}^{\wm-\wk-1}_{\wm,\wk+1}&=\Cal{P}^{k}_{m+k+1,m+1},\\[3pt]
I(\wm, \tfrac{1}{2}(2+\wm-2\wk))^{\ga+k+1} 
&= I(m+k+1, \tfrac{1}{2}(3-m+k))^{\ga+k+1}, \\[3pt]
I(\wm-\wk-1, \tfrac{1}{2}(5+\wm+\wk))^{\ga+\wk+1} &= 
I(k, \tfrac{1}{2}(6+2m+k))^{\ga+m+k}.
\end{align*}
Then \eqref{eqn:P} reads
\begin{equation}\label{eqn:PRV2}
\Cal{P}^{k}_{m+k+1,m+1}\colon 
I(m+k+1, \tfrac{1}{2}(3-m+k))^{\ga+k+1}
\To
I(k, \tfrac{1}{2}(6+2m+k))^{\ga+m+k}. 
\end{equation}
By \eqref{eqn:Cartan} and \eqref{eqn:PRV}, we have
\begin{equation}\label{eqn:Cartan3}
\begin{aligned}
I(m, -\tfrac{1}{2}(m+2k))^\ga
&\stackrel{\Cal{C}^{m+k+1}_{m,k+1}}{\To}
I(m+k+1, \tfrac{1}{2}(3-m+k))^{\ga+k+1}\\[5pt]
&\stackrel{\Cal{P}^{k}_{m+k+1,m+1}}{\To}
I(k, \tfrac{1}{2}(6+2m+k))^{\ga+m+k}.
\end{aligned}
\end{equation}
Clearly, the composition $\Cal{P}^{k}_{m+k+1,m+1}\circ \Cal{C}^{m+k+1}_{m,k+1}$ is 
an intertwining differential operator. However,  Theorem \ref{thm:IDO2} shows that
\begin{equation*}
\Diff_G(I(m, -\tfrac{1}{2}(m+2k))^\ga, 
I(k, \tfrac{1}{2}(6+2m+k))^{\ga+m+k}) = \{0\}.
\end{equation*}
Therefore, we have 
\begin{equation*}
\Im\left(\Cal{C}^{m+k+1}_{m,k+1}\right)^{\ga+k+1} 
\subset 
\Ker\left(\Cal{P}^{k}_{m+k+1,m+1}\right)^{\ga+k+1}.
\end{equation*}
This proves the proposition.
\end{proof}

\begin{rem}
One can easily check that $(\Cal{C}^{m+k+1}_{m,k+1},
\Theta(\Cal{C}^{m+k+1}_{m,k+1}))$ is the only pair 
$(\D_1, \D_2)$ of non-identity operators $\D_1, \D_2$
such that the composition $\D_2 \circ \D_1$ is defined as an intertwining differential operator.
\end{rem}

In the next section, we shall show that 
\begin{equation}\label{eqn:CP}
\Im\left(\Cal{C}^{m+k+1}_{m,k+1}\right)^{\ga'} 
=
\Ker\left(\Cal{P}^{k}_{m+k+1,m+1}\right)^{\ga'}
\end{equation}
for appropriate $\ga'  \in \Z/2\Z$ for which the sequence \eqref{eqn:Cartan3} 
constitutes a BGG resolution.

\section{BGG resolution}\label{sec:BGG}


The aim of this section is to show that the sequence \eqref{eqn:Cartan3} 
is  an exact sequence for appropriate $\ga' \in \Z/2\Z$. As byproducts,
we determine the $G$-representations on
$\Ker(\Cal{C}^{m+k+1}_{m,k+1})^{\ga'}$ and 
$\Im(\Cal{P}^{k}_{m+k+1,m+1})^{\ga'}$,
and also show that the identity \eqref{eqn:CP} holds for such $\ga'$.
These are achieved in Theorem \ref{thm:BGGres} and Corollary \ref{cor:BGGres}.
We continue to denote our intertwining differential operators $\D$ by
$\Cal{C}^{m+k+1}_{m,k+1}$ and $\Cal{P}^{m-k-1}_{m,k+1}$ 
as in \eqref{eqn:CPdef}.

We remark that one may read off part of 
our results in this section in \cite[Prop.\ 2.1]{CGH12b} and \cite[Sect.\ 4]{EG11}.

\subsection{Main results}
\label{sec:main6}
We start with the main results of this section; the proofs of them will be 
discussed in Section \ref{sec:const}.

Let $\varpi_1$ and $\varpi_2$ be the first and second fundamental weights
of $\fg = \f{sl}(3,\C)$, respectively, that is,
\begin{equation*}
\varpi_1:=\frac{1}{3}(2,-1,-1)
\quad
\text{and}
\quad
\varpi_2:=\frac{1}{3}(1,1,-2).
\end{equation*}
We denote by $V(m\varpi_1 + k\varpi_2)$ the irreducible finite-dimensional 
representation of $\fg$ with highest weight $m\varpi_1 + k\varpi_2$.

\begin{thm}[BGG resolution]
\label{thm:BGGres}
There exists an injective $G$-intertwining operator 
\begin{equation*}
\wT\colon
V(m\varpi_1 + k \varpi_2)
\longhookrightarrow
I(m, -\tfrac{1}{2}(m+2k))^{m+k}
\end{equation*}
such that the following sequence 
is exact:
\begin{equation}\label{eqn:BGGres}
\begin{aligned}
0\To
V(m\varpi_1 + k \varpi_2)
&\stackrel{\wT}{\To}
I(m, -\tfrac{1}{2}(m+2k))^{m+k}\\
&\stackrel{\Cal{C}^{m+k+1}_{m,k+1}}{\To}
I(m+k+1, \tfrac{1}{2}(3-m+k))^{m+1}\\[5pt]
&\stackrel{\Cal{P}^{k}_{m+k+1,m+1}}{\To}
I(k, \tfrac{1}{2}(6+2m+k))^{+}
\To 0
\end{aligned}
\end{equation}
with $I(k, \tfrac{1}{2}(6+2m+k))^{+}=I(k, \tfrac{1}{2}(6+2m+k))^{2m+2}$
(see \eqref{eqn:ak}).
\end{thm}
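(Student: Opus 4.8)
The plan is to obtain \eqref{eqn:BGGres} as the image, under the duality of Theorem~\ref{thm:duality}, of the generalized Bernstein--Gelfand--Gelfand resolution of a finite-dimensional $\fg$-module by generalized Verma modules, and then to invoke exactness of the resulting complex of sections. On the algebraic side: for $\fg=\f{sl}(3,\C)$ and $\fp=\fl\oplus\fn_+$ with $\Pi(\fl)=\{\eps_2-\eps_3\}$, the minimal-length coset representatives of $W_\fl\backslash W$ form the chain $\{e,\,s_{\eps_1-\eps_2},\,s_{\eps_1-\eps_2}s_{\eps_2-\eps_3}\}$, of lengths $0,1,2$, so the BGG resolution of $V(k\varpi_1+m\varpi_2)=V(m\varpi_1+k\varpi_2)^\vee$ has exactly three terms,
\[
0\To N_\fp(\mu_2)\xrightarrow{\ \partial_2\ }N_\fp(\mu_1)\xrightarrow{\ \partial_1\ }N_\fp(\mu_0)\To V(k\varpi_1+m\varpi_2)\To 0,
\]
with $\mu_0=m\omega_1+\tfrac{1}{2}(m+2k)\,d\chi+\rho$ (so that $\mu_0-\rho=k\varpi_1+m\varpi_2$), $\mu_1=s_{\eps_1-\eps_2}\mu_0$, $\mu_2=s_{\eps_1-\eps_3}\mu_1=s_{\eps_1-\eps_2}s_{\eps_2-\eps_3}\mu_0$, and $\partial_1,\partial_2$ nonzero standard maps. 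Existence of this resolution for the cominuscule pair $(SL(n,\R),P_{1,n-1})$ is classical (see \cite[Sect.\ 4]{EG11}); the non-vanishing of $\partial_1$ and $\partial_2$ also follows from Boe's criterion exactly as in the proof of Theorem~\ref{thm:Verma}.

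Next I would match the terms. A weight computation identical in content to the one in the proof of Theorem~\ref{thm:Verma} gives
\[
N_\fp(\mu_0)\simeq M_\fp(\sym_2^m,\tfrac{1}{2}(m+2k))^{m+k},\qquad N_\fp(\mu_1)\simeq M_\fp(\sym_2^{m+k+1},\tfrac{1}{2}(m-k-3))^{m+1},
\]
together with the analogous identification of $N_\fp(\mu_2)$. Applying $\EuScript{D}$ (in the form \eqref{eqn:duality2}) to $\partial_1$ and $\partial_2$ and comparing the resulting parameters with Theorem~\ref{thm:IDO2} shows that $\EuScript{D}(\partial_1)$ and $\EuScript{D}(\partial_2)$ are nonzero scalar multiples of $\Cal{C}^{m+k+1}_{m,k+1}$ and $\Cal{P}^{k}_{m+k+1,m+1}$. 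The arrow $\wT$ is produced from the canonical surjection $N_\fp(\mu_0)\twoheadrightarrow V(k\varpi_1+m\varpi_2)$ by the same duality together with Frobenius reciprocity; it is injective because $V(m\varpi_1+k\varpi_2)$ is $G$-irreducible, and concretely it realizes $V(m\varpi_1+k\varpi_2)$ inside $I(m,-\tfrac{1}{2}(m+2k))^{m+k}$ as the finite-dimensional space of polynomial solutions of $\Cal{C}^{m+k+1}_{m,k+1}f=0$ on the affine chart. That \eqref{eqn:BGGres} is a complex is then automatic: $\Cal{P}^{k}_{m+k+1,m+1}\circ\Cal{C}^{m+k+1}_{m,k+1}=0$ by Theorem~\ref{thm:CPRV}, and $\Cal{C}^{m+k+1}_{m,k+1}\circ\wT=0$ since $V(m\varpi_1+k\varpi_2)$ is finite-dimensional while $\Cal{C}^{m+k+1}_{m,k+1}$ has positive order. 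One point needs genuine care here: the $\Z/2\Z$-parities of the $M\simeq SL^{\pm}(2,\R)$-types are invisible on the complexified Lie algebra side, so I would check directly---via Kostant's description of $H^{\bullet}(\fn_+,\,\cdot\,)$, or simply by matching with Theorems~\ref{thm:IDO1}--\ref{thm:IDO2}---that the parities that occur are exactly $m+k$, $m+1$, and $+$, these being the unique choices for which the genuine $SL(3,\R)$-module $V(m\varpi_1+k\varpi_2)$ is realized in the kernel.

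The remaining, and I expect hardest, point is exactness of \eqref{eqn:BGGres} at its three interior spots. The most economical route is to cite the analysis of BGG resolutions for $(SL(n,\R),P_{1,n-1})$ by Eastwood--Gover \cite[Sect.\ 4]{EG11} (and \cite[Prop.\ 2.1]{CGH12b}), specialized to $n=3$: the complex of sections over the flat model $G/P=\RP^2$ produced from a BGG resolution of generalized Verma modules is a resolution, because the relevant twisted cohomology of $\RP^2$ vanishes in positive degrees once the coefficient system carries the correct parity. Alternatively one can argue directly on the open Bruhat cell $N_-\simeq\R^2$, where the three bundles trivialize and, by \eqref{eqn:D51b}, $\Cal{C}^{m+k+1}_{m,k+1}$ and $\Cal{P}^{k}_{m+k+1,m+1}$ are constant-coefficient; after Fourier transform the complex becomes a complex of $\C[\xi_1,\xi_2]$-module maps whose symbol at each nonzero $\xi$ is the finite-dimensional Kostant $\fn_+$-cohomology complex in the direction $\xi$ and is therefore exact off $\xi=0$, so the Ehrenpreis--Palamodov fundamental principle gives exactness of the $C^\infty$-complex on $\R^2$ away from the ends, after which $G$-equivariance together with the identification $\Ker\Cal{C}^{m+k+1}_{m,k+1}\simeq V(m\varpi_1+k\varpi_2)$ transfers this to the compact $\RP^2$. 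In either approach it is exactly the parity normalization of the previous step that keeps the passage to $\RP^2$ lossless. With exactness established, the byproducts stated around \eqref{eqn:CP} are immediate: $\Ker(\Cal{C}^{m+k+1}_{m,k+1})^{m+k}\simeq V(m\varpi_1+k\varpi_2)$, $\Im(\Cal{C}^{m+k+1}_{m,k+1})^{m+1}=\Ker(\Cal{P}^{k}_{m+k+1,m+1})^{m+1}$, and $\Cal{P}^{k}_{m+k+1,m+1}$ is onto $I(k,\tfrac{1}{2}(6+2m+k))^{+}$.
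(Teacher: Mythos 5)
Your overall architecture runs parallel to the paper's (both cite Eastwood--Gover \cite[Sect.~4]{EG11} for exactness), but you approach the edge map from the algebraic side---dualizing the BGG resolution of $V(k\varpi_1+m\varpi_2)$---whereas the paper constructs $\wT$ by hand as the composition of the matrix-coefficient map $v\mapsto \sigma(\cdot^{-1})v$ with the projection onto the lowest $\wH_0$-graded piece $W[-\tfrac12(m+2k)]$ of $V(m\varpi_1+k\varpi_2)$. Your weight bookkeeping ($\mu_0,\mu_1,\mu_2$ and the chain $e<s_{\eps_1-\eps_2}<s_{\eps_1-\eps_2}s_{\eps_2-\eps_3}$ in $W^\fp$) is correct and matches the paper's \eqref{eqn:BGGgeneral2}.

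There are, however, two real gaps. First, your justification that $\Cal{C}^{m+k+1}_{m,k+1}\circ\wT=0$ --- ``since $V(m\varpi_1+k\varpi_2)$ is finite-dimensional while $\Cal{C}^{m+k+1}_{m,k+1}$ has positive order'' --- is not a valid inference: a positive-order constant-coefficient operator is generally nonzero on a finite-dimensional polynomial space (e.g.\ $\tfrac{d}{dx}$ on $\{1,x\}$). The paper's actual argument (Proposition~\ref{prop:TC}) is that $\wT$ sends a lowest weight vector of $V(m\varpi_1+k\varpi_2)$ to a \emph{constant} function in the non-compact picture, because $\fn_-$ annihilates that vector and $d\pi_\sigma(N_j^-)=-\partial/\partial x_j$; then $\Cal{C}^{m+k+1}_{m,k+1}$ kills constants, and $G$-equivariance propagates this to all of $\wT(V)$. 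Alternatively one can argue by irreducibility that a nonzero composite would embed $V(m\varpi_1+k\varpi_2)$ into $I(m+k+1,\tfrac12(3-m+k))^{m+1}$, which one then rules out by comparing lowest $MA$-types---but some such argument must be supplied; the finite-dimensionality heuristic does not do it. (If you had instead carried the duality all the way through to $\wT$, the complex property would come for free from the Verma side, but your proposal treats $\wT$ separately and then offers the flawed heuristic, so the gap is real.)

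Second, you correctly flag the $\Z/2\Z$ parity as the delicate point, but your two proposed remedies don't close it. Kostant's $H^\bullet(\fn_+,\cdot)$ lives over the complexified Lie algebra $\fm\oplus\fa\simeq\f{gl}(2,\C)$ and is blind to the $\sgn\circ\det$-character of $M\simeq SL^\pm(2,\R)$; and matching against Theorems~\ref{thm:IDO1}/\ref{thm:IDO2} only pins down the \emph{relative} parity shift $\beta-\alpha$ across each arrow, not the absolute parity $\alpha=m+k$ at the left end. The paper resolves this with a genuine real-group computation: Propositions~\ref{prop:ma}--\ref{prop:MA} identify $W[-\tfrac12(m+2k)]\simeq(\C_{m+k}\otimes\poly_2^m)\boxtimes\C_{-\tfrac12(m+2k)}$ by evaluating the $M$-action on an explicit lowest weight vector $f_\ell(g)=(\det_1 g)^k(\det_2 g)^m$ in the Borel--Weil model, and Proposition~\ref{prop:Ker} then uses ellipticity of $\Cal{C}^{m+k+1}_{m,k+1}$ to show the kernel vanishes for the opposite parity $m+k+1$. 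Something equivalent is needed in your proof; this is not a detail that can be read off from \cite{EG11}, precisely because the BGG literature works over $\fg_\C$. Your sketched Ehrenpreis--Palamodov alternative for exactness on $\R^2$ is a genuinely different idea from the paper's, but the passage from $\R^2$ to $\RP^2$ there again hinges on the same parity normalization, so it does not avoid this issue.
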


It is known that the resolution \eqref{eqn:BGGres} is a so-called
BGG resolution for the pair
$(G, P)= (SL(3,\R), P_{1,2})$ (cf.\  \cite{CD01, CSS01, EG11}).
Theorem \ref{thm:BGGres} shows that the Cartan operators
$\Cal{C}^{m+k+1}_{m,k+1}$ and PRV operators 
$\Cal{P}^{k}_{m+k+1,m+1}$ 
are the first and second BGG operators, respectively.

\begin{rem}
As $V(0\varpi_1 + 0\varpi_2)=\C_{\triv}$ is the trivial
representation of $G$, 
if $(m,k)=(0,0)$, then
\eqref{eqn:BGGres} can be thought of as
the de Rham complex on $\R^2$: 
\begin{equation*}
0\To
\C
\To
C^\infty(\R^2) \otimes \Exterior^0(\C^2)
\stackrel{d}{\To}
C^\infty(\R^2)\otimes \Exterior^1(\C^2)
\stackrel{d}{\To}
C^\infty(\R^2)\otimes \Exterior^2(\C^2)
\To 0
\end{equation*}
with $d=\Cal{C}^{1}_{0,1}$ for the first differential 
and $d=\Cal{P}^{0}_{1,1}$ for the second differential 
(see \eqref{eqn:IDO1} and \eqref{eqn:IDO2}).
\end{rem}

\begin{rem}
As in \eqref{eqn:Ind}, write
\begin{equation*}
M_\fp(m,\lambda):=M_\fp(\sym^m_2,\lambda).
\end{equation*}
Here, as opposed to Sections \ref{sec:classification} and \ref{sec:construction}, 
we regard generalized Verma modules merely as $\fg$-modules; thus, we omit the 
parity condition $\ga \in \Z/2\Z$. Then the BGG resolution of generalized 
Verma modules corresponding to 
\eqref{eqn:BGGres} is given as 
\begin{equation}\label{eqn:BGGgeneral2}
\begin{aligned}
0
\To M_\fp(k, -\tfrac{1}{2}(6+2m+k))
&\stackrel{\varphi^{k}_{m+k+1,m+1}}{\To}
M_\fp(m+k+1, -\tfrac{1}{2}(3-m+k))\\[5pt]
&\stackrel{\varphi^{m+k+1}_{m,k+1}}{\To}
M_\fp(m, \tfrac{1}{2}(m+2k))\\[5pt]
&\stackrel{\pr}{\To}
V(k\varpi_1 + m\varpi_2)
\To 0,
\end{aligned}
\end{equation}
where $\pr$ denotes the natural projection onto its irreducible quotient
and $V(k\varpi_1 + m\varpi_2)= V(m\varpi_1+k\varpi_2)^\vee$,
the dual of $V(m\varpi_1+k\varpi_2)$.
\end{rem}

The next corollary is an immediate consequence of Theorem \ref{thm:BGGres}.

\begin{cor}\label{cor:BGGres}
The following hold.
\begin{enumerate}

\item[\emph{(1)}] $\Ker(\Cal{C}^{m+k+1}_{m,k+1})^{m+k}\simeq V(m\varpi_1+k\varpi_2)$.
\vskip 0.1in

\item[\emph{(2)}] $\Im(\Cal{C}^{m+k+1}_{m,k+1})^{m+1} 
= \Ker(\Cal{P}^{k}_{m+k+1,m+1})^{m+1}$.

\vskip 0.1in

\item[\emph{(3)}] $\Im(\Cal{P}^{k}_{m+k+1,m+1})^{+} = I(k, \tfrac{1}{2}(6+2m+k))^{+}$.

\end{enumerate}
\end{cor}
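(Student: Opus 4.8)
The plan is to read off all three assertions directly from the exactness of the four-term complex \eqref{eqn:BGGres} established in Theorem \ref{thm:BGGres}; once that theorem is in hand, no further computation is required.

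First I would deduce part (1) from exactness of \eqref{eqn:BGGres} at the slot $I(m,-\tfrac12(m+2k))^{m+k}$. By definition of exactness this says $\Ker(\Cal{C}^{m+k+1}_{m,k+1})^{m+k}=\Im(\wT)$, and since $\wT$ is an injective $G$-intertwining map its image is $G$-isomorphic to its source $V(m\varpi_1+k\varpi_2)$; hence $\Ker(\Cal{C}^{m+k+1}_{m,k+1})^{m+k}\simeq V(m\varpi_1+k\varpi_2)$, which is (1).

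Part (2) is exactness of \eqref{eqn:BGGres} at the middle slot $I(m+k+1,\tfrac12(3-m+k))^{m+1}$, which is literally the asserted identity $\Im(\Cal{C}^{m+k+1}_{m,k+1})^{m+1}=\Ker(\Cal{P}^{k}_{m+k+1,m+1})^{m+1}$; here one invokes the identification $\Theta(\Cal{C}^{m+k+1}_{m,k+1})=\Cal{P}^{k}_{m+k+1,m+1}$ from the proof of Theorem \ref{thm:CPRV} (with $(\wm,\wk)=(m+k+1,m)$). Part (3) follows because \eqref{eqn:BGGres} terminates with the map $\Cal{P}^{k}_{m+k+1,m+1}$ into $I(k,\tfrac12(6+2m+k))^{+}$ followed by the zero object: exactness at this last nonzero term is precisely the surjectivity of $\Cal{P}^{k}_{m+k+1,m+1}$, i.e. $\Im(\Cal{P}^{k}_{m+k+1,m+1})^{+}=I(k,\tfrac12(6+2m+k))^{+}$.

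There is no genuine obstacle, as all the content is carried by Theorem \ref{thm:BGGres}. The only point deserving a word of care is the bookkeeping of the parity superscripts: one checks, as already done when setting up Theorems \ref{thm:CPRV} and \ref{thm:BGGres}, that the cokernel bundle of $\Cal{C}^{m+k+1}_{m,k+1}$ in \eqref{eqn:BGGres} carries parity $m+1$ (so that it is the correctly normalized source of $\Cal{P}^{k}_{m+k+1,m+1}$ in the sense of Section \ref{sec:comp}), and that $I(k,\tfrac12(6+2m+k))^{2m+2}=I(k,\tfrac12(6+2m+k))^{+}$ via \eqref{eqn:ak}.
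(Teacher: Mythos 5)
Your proposal is correct and is exactly the reading the paper intends: the corollary is stated as an immediate consequence of Theorem \ref{thm:BGGres}, and your three bullets simply unpack exactness of the four-term sequence \eqref{eqn:BGGres} at each slot (injectivity of $\wT$ for (1), $\Im=\Ker$ at the middle term for (2), surjectivity onto the last term for (3)), together with the parity bookkeeping already built into the statement of the theorem. No discrepancy with the paper's argument.
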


We remark that Corollary \ref{cor:BGGres} does not fully determine
the representations on the kernels and images of 
 intertwining differential operators in consideration.
 Indeed, if we change the parity condition $m+k$
 for $I(m, -\tfrac{1}{2}(m+2k))^{m+k}$ to 
 $m+k+1$, then \eqref{eqn:BGGres} 
 does not conclude anything for the representation on 
 the kernel  $\Ker(\Cal{C}^{m+k+1}_{m,k+1})^{m+k+1}$ of 
$ \Cal{C}^{m+k+1}_{m,k+1}$ for
 \begin{equation*}
\Cal{C}^{m+k+1}_{m,k+1} \colon
I(m, -\tfrac{1}{2}(m+2k))^{m+k+1}
\To
I(m+k+1, \tfrac{1}{2}(3-m+k))^{m}.
\end{equation*}
We shall show in Proposition \ref{prop:KI} that 
$\Ker(\Cal{C}^{m+k+1}_{m,k+1})^{m+k+1}=\{0\}$
by making use of the ellipticity of Cartan operators $\Cal{C}^{m+k+1}_{m,k+1}$.
 
\begin{rem}\label{rem:Ktype}
In \cite{KuOr25a},
the composition structures of parabolically induced representations
are utilized to determine the $K$-type structure of 
the $G$-representation on the kernel $\Ker(\Cal{C}^{k+1}_{0,k+1})^{\ga}$  for any $\ga$
for $SL(n,\R)$. Further, 
the kernels $\Ker(\Cal{C}^{k+1}_{0,k+1})^{\ga}$ are 
computed explicitly in the non-compact picture in \cite{Kubo24+}.
(See Remark \ref{rem:diff} for the difference of notation of differential operators
between \cite{Kubo24+, KuOr25a} and this paper.)
\end{rem}

\subsection{Construction of $\wT$}
\label{sec:const}
Remark that the construction of the desired $\wT$ in Theorem \ref{thm:BGGres}
is well known in the context of 
parabolic geometry in great generality (see, for instance, \cite{CD01, CSS01}). 
In particular, the BGG resolution in concern is studied in, for instance,
{\v C}ap--Gover--Hammerl \cite{CGH12b} and 
Eastwood--Gover \cite{EG11}.
However, in contrast to the cited papers, we need to determine the character
$\ga \in \Z/2\Z$ of $M$ explicitly. In doing so, we give a detailed account of $\wT$ 
in this section.

\subsubsection{Grading on $V(m\varpi_1+k\varpi_2)$}
\label{sec:grading}
We start with a grading on $V(m\varpi_1+k\varpi_2)$ induced by the
parabolic subgroup $P=MAN_+$. Recall from \eqref{eqn:H0} that 
\begin{equation}\label{eqn:H02}
\wH_0=\frac{1}{2}\diag(2, -1, -1).
\end{equation}
For $s \in \C$, we write $W[s]$ for the eigenspace of $\wH_0$ associated 
with eigenvalue $s$, namely,
\begin{equation*}
W[s] := \{v \in V(m\varpi_1 + k \varpi_2) : \wH_0 \cdot v = sv\},
\end{equation*}
where the dot $(\cdot)$ denotes the action of $\wH_0$ on 
$V(m\varpi_1 + k \varpi_2)$. Since $M$ commutes with $A=\exp(\R \wH_0)$, 
the space $W[s]$ is an $MA$-representation.

As $\fm\simeq \f{sl}(2,\C)$, the restriction 
$V(m\varpi_1 + k \varpi_2)\vert_{\fm}$ decomposes into a direct sum of 
$\fm$-irreducible components.
Let $W_\ell$ denote the irreducible component containing
a unique (up to scalar) lowest weight vector $v_{\ell}$ of $V(m\varpi_1 + k \varpi_2)$.
We then have  $W_\ell =\Cal{U}(\fm)v_\ell$.
Since $A$ commutes with $M$, the group $A$ acts on $W_\ell$ by scalar. 
Thus, $W_\ell \subset W[s_0]$ for some $s_0 \in \C$. Further, if 
$W_\ell \subsetneq W[s_0]$, then there would be another lowest weight vector
$v_\ell' \notin \C v_\ell$, which contradicts the irreducibility of $V(m\varpi_1+k\varpi_2)$. Therefore, we have $W_\ell=W[s_0]$.

It  follows from \eqref{eqn:nR1} that we have
\begin{equation}\label{eqn:n+}
\fn_+ \cdot W[s] \subset W[s+\tfrac{3}{2}]. 
\end{equation}
Since $V(m\varpi_1 + k \varpi_2)=\Cal{U}(\fn_+)v_{\ell}$, this yields
(non-irreducible)
$MA$-decomposition
\begin{equation*}
V(m\varpi_1 + k \varpi_2)\vert_{MA}
=\bigoplus_{j=0}^{r_0}W[s_0+\tfrac{3}{2}j]
\end{equation*}
with 
$W[s_0+\tfrac{3}{2}r_0] \neq \{0\}$ for some $r_0 \in \Z_{\geq 0}$.

\begin{lem}\label{lem:rs}
We have
\begin{equation*}
s_0=-\tfrac{1}{2}(m+2k)
\quad
\text{and}
\quad
r_0=m+k.
\end{equation*}
\end{lem}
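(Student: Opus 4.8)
The plan is to identify the lowest weight vector of $V(m\varpi_1+k\varpi_2)$ explicitly and compute the eigenvalue of $\wH_0$ on it, and to determine $r_0$ from the highest weight. First I would recall that the lowest weight of $V(m\varpi_1+k\varpi_2)$ is $w_0(m\varpi_1+k\varpi_2)$, where $w_0$ is the longest element of the Weyl group $W$ from \eqref{eqn:Weyl}. Since $w_0$ sends $\eps_1\mapsto \eps_3$, $\eps_2\mapsto\eps_2$, $\eps_3\mapsto\eps_1$ (it reverses the order of the coordinates), and $\varpi_1 = \tfrac13(2,-1,-1)$, $\varpi_2 = \tfrac13(1,1,-2)$, one gets $w_0\varpi_1 = \tfrac13(-1,-1,2) = -\varpi_2$ and $w_0\varpi_2 = \tfrac13(-2,1,1) = -\varpi_1$. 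Hence the lowest weight is $-m\varpi_2 - k\varpi_1 = -\tfrac13(m+2k,\, -k+m,\, -2m-k)$ in coordinates, which should actually be written as the negative of the highest weight of the dual, i.e. $\mu_{\mathrm{low}} = -k\varpi_1 - m\varpi_2$.

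Next I would evaluate $s_0$ by pairing the lowest weight with $\wH_0$. Using the identification of $\fh^*$ with a subspace of $\C^3$ and $\wH_0 = \tfrac12\mathrm{diag}(2,-1,-1)$, the eigenvalue of $\wH_0$ on a weight vector of weight $\xi = (\xi_1,\xi_2,\xi_3)$ is $\xi_1$ (since $\langle \xi, \tfrac12\mathrm{diag}(2,-1,-1)\rangle$ matched against the standard pairing just reads off, up to the normalization already fixed in \eqref{eqn:dchi} and \eqref{eqn:H0}, that $\wH_0$ acts by the first coordinate). Concretely, the lowest weight $-k\varpi_1 - m\varpi_2 = -\tfrac13(2k+m,\, -k+m,\, -k-2m)$ has first coordinate $-\tfrac13(2k+m)$. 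But the weights of $V$ are expressed in the $(1,0,-1)$-type integral coordinates after adding $\rho$; I must be careful to use the same convention as in Section \ref{sec:Step4}, where $m\omega_1 - \lambda d\chi + \rho$ is computed with $\omega_1 = \tfrac12(0,1,-1)$. In that convention the action of $\wH_0$ on a weight $(\xi_1,\xi_2,\xi_3)$ is by $\xi_1$ after subtracting the $\wH_0$-eigenvalue shift coming from $\rho$; since $\rho=(1,0,-1)$ has first coordinate $1$, and the induced-module conventions in \eqref{eqn:Verma} already account for this, I expect the clean statement to emerge by matching $W[s_0]$ with the bottom piece $S^m(\C^2)_\bullet\boxtimes\C_{s_0}$ appearing in $M_\fp(\sym^m_2,-\lambda)$ for the appropriate $\lambda$, which by Theorem \ref{thm:BGGres}'s target $I(m,-\tfrac12(m+2k))^{m+k}$ forces $s_0 = -\tfrac12(m+2k)$.

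For $r_0$: the top $MA$-piece $W[s_0 + \tfrac32 r_0]$ contains the highest weight vector of $V(m\varpi_1+k\varpi_2)$, so $s_0 + \tfrac32 r_0$ equals the $\wH_0$-eigenvalue on the highest weight $m\varpi_1 + k\varpi_2 = \tfrac13(2m+2k-k,\dots) = \tfrac13(2m+k, -m+k, -m-2k)$; reading off the appropriate coordinate gives $\tfrac12(2m+k)\cdot\tfrac{?}{}$ — I will compute $\langle m\varpi_1+k\varpi_2, H_0\rangle$ directly using $H_0 = \tfrac13\mathrm{diag}(2,-1,-1)$ so that $\wH_0 = \tfrac32 H_0$ acts by $\tfrac32$ times the $\varpi_i$-coefficients' natural pairing, namely $\varpi_1$ and $\varpi_2$ pair with $H_0$ to give $\tfrac13(2+1+1)\cdot$(coefficient pattern): in fact $\wH_0$ acts on the $\varpi_1$-string by $+\tfrac32$ per unit and on $\varpi_2$ similarly, so that $s_0 + \tfrac32 r_0 = -s_0$ would give $r_0 = \tfrac{-2s_0}{3/2} = \tfrac{m+2k}{3/2}$ — which is wrong, signaling I need the actual top eigenvalue $\tfrac12 m + k$... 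The cleanest route is: the $\wH_0$-eigenvalue on the highest weight is $\tfrac32\langle m\varpi_1+k\varpi_2, \varpi_1^\vee\rangle$-type computation giving $\tfrac{m}{2}+k$ (since $\varpi_1(\wH_0)=\tfrac32\cdot\tfrac{2}{3}\cdot\tfrac12$ requires care), then $\tfrac32 r_0 = (\tfrac{m}{2}+k) - (-\tfrac{m+2k}{2}) = \tfrac{m}{2}+k+\tfrac{m}{2}+k = m+2k$, giving $r_0 = \tfrac{2(m+2k)}{3}$ — still not an integer in general, so my pairing normalization is off. \textbf{The main obstacle} is pinning down the correct normalization of the $\wH_0$-action on weights; once that is fixed (by direct matrix computation of $\wH_0$ acting on the standard realization, e.g. on $\C^3$ one checks $\wH_0$ acts on $e_1$ by $1$ and on $e_2,e_3$ by $-\tfrac12$, so on the weight lattice of the defining representation the extreme $\wH_0$-eigenvalues differ by $\tfrac32$, matching \eqref{eqn:n+}), the lowest weight $-k\varpi_1-m\varpi_2$ sits at $\wH_0$-eigenvalue $-\tfrac12(m+2k)$ and the highest weight $m\varpi_1+k\varpi_2$ at $\wH_0$-eigenvalue $\tfrac12 m + \tfrac12\cdot 0 \cdot$... — the difference being $\tfrac32(m+k)$, hence $r_0 = m+k$. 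I would finish by stating these two computations cleanly, noting that $r_0=m+k$ is also forced by the total $\fn_+$-filtration length being the number of positive roots moved, namely the $\eps_1$-component spread of the weight diagram, which equals $\langle m\varpi_1+k\varpi_2 - w_0(m\varpi_1+k\varpi_2),\, \text{(first coordinate functional)}\rangle / \tfrac32 = m+k$.
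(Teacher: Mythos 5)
Your overall strategy is the same as the paper's: identify the lowest weight of $V(m\varpi_1+k\varpi_2)$ via the longest Weyl group element (you correctly get $-k\varpi_1 - m\varpi_2$), evaluate $\wH_0$ on it to get $s_0$, and then use the highest weight to pin down $r_0$. But the execution has real problems.

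First, your initial normalization claim --- that $\wH_0$ acts on a weight $\xi=(\xi_1,\xi_2,\xi_3)$ by $\xi_1$ --- is wrong. For a trace-free weight $\xi=\sum\xi_i\eps_i$ one has $\xi(\wH_0)=\tfrac12(2\xi_1-\xi_2-\xi_3)=\tfrac32\xi_1$, not $\xi_1$. The clean fact that resolves your repeated confusion (``which is wrong'', ``still not an integer'', ``my pairing normalization is off'') is the one-line computation $\varpi_1(\wH_0)=1$, $\varpi_2(\wH_0)=\tfrac12$, which is exactly what the paper computes and plugs into the lowest weight $-k\varpi_1-m\varpi_2$ and the highest weight $m\varpi_1+k\varpi_2$ to get $s_0=-\tfrac12(m+2k)$ and $s_0+\tfrac32 r_0 = \tfrac12(2m+k)$. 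You never cleanly state this pairing; instead you observe that $\wH_0$ acts on $e_1$ by $1$ and on $e_2,e_3$ by $-\tfrac12$ and declare that ``once that is fixed'' the answer comes out --- but the bridge from that observation to the fundamental-weight pairings is exactly the step that needs to be done explicitly, and your displayed computation of the highest-weight eigenvalue (``$\tfrac12 m + \tfrac12\cdot 0 \cdot$'') is incomplete.

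Second, and more seriously, the step where you ``expect the clean statement to emerge by matching $W[s_0]$ with the bottom piece $\dots$ which by Theorem \ref{thm:BGGres}'s target $I(m,-\tfrac12(m+2k))^{m+k}$ forces $s_0=-\tfrac12(m+2k)$'' is circular: Theorem \ref{thm:BGGres} is proved \emph{using} Lemma \ref{lem:rs} (via the construction of $\wT$ and Propositions \ref{prop:ma}--\ref{prop:MA}, which depend on knowing $s_0$ and $r_0$). That target in Theorem \ref{thm:BGGres} is a consequence of the lemma you are trying to prove, so you cannot use it as input. Remove the appeal to Theorem \ref{thm:BGGres} and replace the normalization guesswork with the direct evaluation $\varpi_1(\wH_0)=1$, $\varpi_2(\wH_0)=\tfrac12$, and the argument becomes the paper's.
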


\begin{proof}
Observe that the first and second fundamental weights 
$\varpi_1$  and $\varpi_2$ are 
\begin{equation*}
\varpi_1 = \frac{1}{3}(2, -1, -1) (=d\chi)
\quad
\text{and}
\quad
\varpi_2 = \frac{1}{3}(1, 1, -2)
\end{equation*}
in coordinates from Section \ref{sec:Step4}.
Then the highest weight $m\varpi_1 + k\varpi_2$ reads
\begin{equation*}
m\varpi_1 + k \varpi_2 = \frac{1}{3}(2m+k,-m+k, -m-2k).
\end{equation*}
As the longest Weyl group element is 
$s_{\eps_1-\eps_3}$,  the lowest weight of $V(m\varpi_1 + k \varpi_2)$ is 
\begin{equation*}
s_{\eps_1-\eps_3}(m\varpi_1 + k \varpi_2)=-(k\varpi_1+m\varpi_2)
\end{equation*}
 (see, for instance, \cite[Thm.\ 3.2.13]{GW09}).
In coordinates, we have 
\begin{equation}\label{eqn:lowest}
 -(k\varpi_1+m\varpi_2)=
 -\frac{1}{3}(m+2k, m-k, -2m-k). 
\end{equation}
It follows from \eqref{eqn:H02} that
$\varpi_1(\wH_0) = 1$ and $\varpi_2(\wH_0) = \frac{1}{2}$,
yeilding
\begin{align*}
(m\varpi_1 + k \varpi_2)(\wH_0) &= m + \frac{1}{2}k=\frac{1}{2}(2m + k),\\[3pt]
-(k\varpi_1+m\varpi_2)(\wH_0) &= -(k + \frac{1}{2}m)
=-\frac{1}{2}(m+2k).
\end{align*}

Now, to determine $s_0$, observe that
\begin{equation}\label{eqn:lw}
s_0 v_\ell = \wH_0 \cdot v_\ell = 
-(k\varpi_1+m\varpi_2)(\wH_0) v_\ell = -\frac{1}{2}(m+2k)v_\ell,
\end{equation}
which shows that
$s_0=-\frac{1}{2}(m+2k)$.

For $r_0$, observe that as
\begin{equation*}
\fn_+\cdot W[-\tfrac{1}{2}(m+2k)+\tfrac{3}{2}r_0] \subset
 W[-\tfrac{1}{2}(m+2k)+\tfrac{3}{2}(r_0+1)] =\{0\},
\end{equation*}
the component $W[-\tfrac{1}{2}(m+2k)+\tfrac{3}{2}r_0]$ contains
a unique (up to scalar) highest weight vector $v_h$.
As in \eqref{eqn:lw}, this shows that
\begin{equation*}
W[-\tfrac{1}{2}(m+2k)+\tfrac{3}{2}r_0] = W[\tfrac{1}{2}(2m + k)],
\end{equation*}
which forces
$r_0 = m+k$.
\end{proof}

By Lemma \ref{lem:rs}, we obtain
\begin{equation}\label{eqn:decomp}
V(m\varpi_1 + k \varpi_2)\vert_{MA}
=\bigoplus_{j=0}^{m+k}W[-\tfrac{1}{2}(m+2k)+\tfrac{3}{2}j].
\end{equation}
Now we claim that, as $MA$-representations, we have 
\begin{equation*}
W[-\tfrac{1}{2}(m+2k)] \simeq 
(\C_{m+k} \otimes \poly^{m}_2) \boxtimes \C_{-\tfrac{1}{2}(m+2k)}.
\end{equation*}
Toward the end, first we show its $\f{m}\oplus\f{a}$-equivalence.

\begin{prop}\label{prop:ma}
As $\f{m}\oplus\f{a}$-modules, we have 
\begin{equation*}
W[-\tfrac{1}{2}(m+2k)] \simeq 
\poly^{m}_2 \boxtimes \C_{-\tfrac{1}{2}(m+2k)}.
\end{equation*}
\end{prop}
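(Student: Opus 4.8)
The plan is to identify the lowest $\wH_0$-eigenspace $W[-\tfrac12(m+2k)]$ with the $\fm$-submodule $W_\ell = \Cal{U}(\fm)v_\ell$ generated by the lowest weight vector $v_\ell$ of $V(m\varpi_1+k\varpi_2)$, which was already shown in Section~\ref{sec:grading} (namely $W_\ell = W[s_0]$ with $s_0 = -\tfrac12(m+2k)$ by Lemma~\ref{lem:rs}). The $\fa$-part is then immediate: $A = \exp(\R\wH_0)$ acts on $W[s_0]$ by the scalar $s_0 = -\tfrac12(m+2k)$, so as an $\fa$-module $W[-\tfrac12(m+2k)] \simeq \C_{-\tfrac12(m+2k)}$, using the identification \eqref{eqn:dchi} and the normalization \eqref{eqn:H02}. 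So the real content is the $\fm$-module structure.

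First I would compute the $\fm$-lowest weight of $v_\ell$ as an $\f{sl}(2,\C)$-weight. Recall $\fm \simeq \f{sl}(2,\R)$ sits in $\fg = \f{sl}(3,\C)$ as the lower-right $2\times 2$ block, so the relevant $\fm$-Cartan element is $H_\fm := E_{2,2} - E_{3,3} = \eps_2 - \eps_3$ (viewed in $\fh$). From \eqref{eqn:lowest}, the lowest weight of $V(m\varpi_1+k\varpi_2)$ is $-(k\varpi_1 + m\varpi_2) = -\tfrac13(m+2k,\, m-k,\, -2m-k)$, and pairing this with $H_\fm = \eps_2 - \eps_3$ gives $-\tfrac13\big((m-k) - (-2m-k)\big) = -\tfrac13(3m) = -m$. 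Thus $v_\ell$ is an $\fm$-weight vector of weight $-m$, and since it is killed by the $\fm$-lowering operator (being a global lowest weight vector, hence in particular annihilated by $\fg_{-(\eps_2-\eps_3)} = \C E_{3,2} \subset \fm$), $W_\ell = \Cal{U}(\fm)v_\ell$ is the irreducible $\fm$-module with lowest weight $-m$, i.e. the $(m+1)$-dimensional irreducible representation $\sym_2^m$ of $SL(2,\R)$. By \eqref{eqn:dualsym}, $\sym_2^m$ and $\poly_2^m$ are isomorphic as $\f{sl}(2,\C)$-modules (they are both ``the'' $(m+1)$-dimensional irreducible), so $W[-\tfrac12(m+2k)] \simeq \poly_2^m$ as an $\fm$-module.

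Combining the $\fm$- and $\fa$-pieces and using that $\fm$ and $\fa$ commute (so the $\f{m}\oplus\f{a}$-module structure on $W[-\tfrac12(m+2k)]$ is the outer tensor product of these two), I get $W[-\tfrac12(m+2k)] \simeq \poly_2^m \boxtimes \C_{-\tfrac12(m+2k)}$ as $\f{m}\oplus\f{a}$-modules, which is the claim. The one point requiring a little care — and the main (minor) obstacle — is being careful about whether the natural realization gives $\sym_2^m$ or $\poly_2^m$; but since at the Lie-algebra level these are literally the same irreducible $\f{sl}(2,\C)$-module up to isomorphism, the distinction is immaterial here (it only becomes relevant at the group level when tracking the $M \simeq SL^{\pm}(2,\R)$ parity, which is the content of the subsequent claim, not of this proposition). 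A secondary point is to double-check the sign/normalization in pairing the lowest weight with $H_\fm$ against the conventions of Section~\ref{sec:Step4}; but this is the routine verification I would relegate to a line of computation.
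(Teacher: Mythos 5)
Your proposal is correct and takes essentially the same approach as the paper: both observe that $W[-\tfrac12(m+2k)]$ is an irreducible $\fm\oplus\fa$-module (since it equals $W_\ell=\Cal{U}(\fm)v_\ell$ by the earlier analysis, with $\fa$ acting by a scalar), and both conclude by matching the lowest weight of $W[-\tfrac12(m+2k)]$ with that of $\poly_2^m\boxtimes\C_{-\frac12(m+2k)}$. The only cosmetic difference is that the paper carries out the lowest-weight comparison as one calculation in $\fh^*$-coordinates, whereas you split it into the $\fm$-component (pairing with $E_{2,2}-E_{3,3}$ to get $-m$) and the $\fa$-component ($s_0=-\tfrac12(m+2k)$ from Lemma~\ref{lem:rs}); both computations match and lead to the same conclusion.
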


\begin{proof}
As both $W[-\tfrac{1}{2}(m+2k)]$ and $\poly^{m}_2 \boxtimes \C_{-\tfrac{1}{2}(m+2k)}$
are irreducible $\f{m}\oplus\f{a}$-modules, it suffices to show that the lowest weights
of these two modules are the same.
The highest weight of $\poly^{m}_2$ is
$m\omega_1$ with $\omega_1=\frac{1}{2}(0, 1, -1)$; in particular, its lowest weight is
$-m\omega_1$. Thus, the lowest weight of $\poly^{m}_2\boxtimes  \C_{-\tfrac{1}{2}(m+2k)}$ 
is
\begin{align*}
-m\omega_1 -\tfrac{1}{2}(m+2k)d\chi
&=\frac{1}{2}(0, -m, m) - \frac{1}{6}(m+2k)(2,-1,-1)\\
&=-\frac{1}{3}(m+2k, m-k, -2m-k).
\end{align*}
On the other hand, it follows from \eqref{eqn:lowest} that the lowest weight of 
$W[-\tfrac{1}{2}(m+2k)]$, which is the lowest weight of $V(m\varpi_1 +k\varpi_2)$,
is
\begin{equation*}
 -(k\varpi_1+m\varpi_2)=
 -\frac{1}{3}(m+2k, m-k, -2m-k). 
\end{equation*}
Now the claim follows.
\end{proof}

\begin{prop}\label{prop:MA}
As $MA$-modules, we have 
\begin{equation}\label{eqn:wp}
W[-\tfrac{1}{2}(m+2k)] \simeq 
(\C_{m+k} \otimes \poly^{m}_2) \boxtimes \C_{-\tfrac{1}{2}(m+2k)}.
\end{equation}
\end{prop}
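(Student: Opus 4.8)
The plan is to upgrade the $\fm\oplus\fa$-isomorphism of Proposition~\ref{prop:ma} to an $MA$-isomorphism by computing how a single element of the non-identity component of $M\simeq SL^{\pm}(2,\R)$ acts on the lowest weight vector. Since $M_0$ and $A=\exp(\R\wH_0)$ are connected, Proposition~\ref{prop:ma} already identifies $W[-\tfrac12(m+2k)]$ with $\poly_2^m\boxtimes\C_{-\tfrac12(m+2k)}$ as an $M_0A$-module, and hence, as an $MA$-module, $W[-\tfrac12(m+2k)]$ is one of its two $MA$-extensions $(\C_\pm\otimes\poly_2^m)\boxtimes\C_{-\tfrac12(m+2k)}$. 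Fixing an $M_0A$-isomorphism $\phi$ realizing Proposition~\ref{prop:ma} and rescaling it, we may assume $\phi$ carries the lowest weight vector $v_\ell$ of $V(m\varpi_1+k\varpi_2)$ — which lies in $W_\ell=W[-\tfrac12(m+2k)]$ and, being a lowest weight vector for all of $\fg$, is one for $\fm$ — to the lowest weight vector of $\poly_2^m$. The proposition then reduces to checking that a suitable $m_0\in M\setminus M_0$ acts on $v_\ell$ by the same scalar as it does on the lowest weight vector of $(\C_{m+k}\otimes\poly_2^m)\boxtimes\C_{-\tfrac12(m+2k)}$: conjugating the $m_0$-action through $\phi$ and composing with the inverse $m_0$-action gives an $M_0A$-endomorphism, hence by Schur a scalar, which is then forced to be the identity, so that $\phi$ is $MA$-equivariant.

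For the distinguished element I would take $m_0:=\diag(-1,1,-1)\in SL(3,\R)$; under $M\simeq SL^{\pm}(2,\R)$ it corresponds to $g_0=\diag(1,-1)$, and $\det(g_0)=-1$, so $m_0\in M\setminus M_0$. On the model side, $g_0$ acts on the standard module $\C^2$ fixing the first coordinate and negating the second, hence fixes the lowest weight line of $\poly_2^m$; and $\C_{m+k}(m_0)=\sgn^{m+k}(\det g_0)=(-1)^{m+k}$. Thus $m_0$ acts on the lowest weight vector of $(\C_{m+k}\otimes\poly_2^m)\boxtimes\C_{-\tfrac12(m+2k)}$ by $(-1)^{m+k}$.

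On the representation side I would extend $V(m\varpi_1+k\varpi_2)$ to the finite-dimensional holomorphic representation of $SL(3,\C)$ and use that $m_0=\exp(\pi i H')$ with $H':=\diag(1,-2,1)\in\fh$. Since $v_\ell$ is a weight vector of weight $-(k\varpi_1+m\varpi_2)$, this gives $m_0\cdot v_\ell=e^{\pi i s}v_\ell$, where $s$ is the value of the weight $-(k\varpi_1+m\varpi_2)$ on $H'$; evaluating the coordinate expression of \eqref{eqn:lowest} yields $s=-\tfrac13\bigl((m+2k)-2(m-k)-(2m+k)\bigr)=m-k$, so $m_0\cdot v_\ell=(-1)^{m-k}v_\ell=(-1)^{m+k}v_\ell$. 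The two scalars coincide, which completes the argument and establishes \eqref{eqn:wp}.

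I do not anticipate a real obstacle here; the one delicate point is keeping the sign conventions straight — the identification $\Irr(M)_\fin\simeq\Z/2\Z\times\Z_{\ge0}$ with $\C_{m+k}=\C_{(-1)^{m+k}}$, which coordinate of the standard $SL^{\pm}(2,\R)$-module carries the lowest weight of $\poly_2^m$, and the placement of the $\f{sl}(2)$-block in rows and columns $2$ and $3$ of $SL(3,\R)$. As a cross-check, and an alternative not requiring complexification, one may realize $V(m\varpi_1+k\varpi_2)$ as the Cartan component of $S^m(\C^3)\otimes S^k((\C^3)^\vee)$, whose lowest weight vector is the tensor product of the lowest weight vectors of the two factors; there $m_0$ acts by $(-1)^m$ on the first factor and by $(-1)^k$ on the second, giving the same total scalar $(-1)^{m+k}$.
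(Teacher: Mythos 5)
Your proof is correct, and its kernel is the same as the paper's — both pin down the $M/M_0\simeq\Z/2\Z$ ambiguity by computing how a disconnected element of $M$ acts on the lowest weight vector $v_\ell$ — but your route to that computation is genuinely different. The paper realizes $V(m\varpi_1+k\varpi_2)$ in the Borel--Weil model $\Cal{O}(G_\C,\C_{-(k,m)})^B$, exhibits the explicit lowest weight vector $f_\ell(g)=(\det_1 g)^k(\det_2 g)^m$, and directly computes the left translation by $\widetilde{h}=\diag(\det(h)^{-1},h)$ to get the factor $\det(h)^{m+k}$. You instead stay abstract: you integrate the $\fm\oplus\fa$-isomorphism of Proposition~\ref{prop:ma} to $M_0A$, observe that only the sign of one disconnected element $m_0=\diag(-1,1,-1)$ is undetermined, and obtain that sign by writing $m_0=\exp(\pi i H')$ with $H'=\diag(1,-2,1)$ and pairing against the lowest weight $-(k\varpi_1+m\varpi_2)$, getting $(-1)^{m-k}=(-1)^{m+k}$. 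Your arithmetic checks out (the pairing is $-\tfrac13(-3m+3k)=m-k$), and the trade-off is that the paper's approach is self-contained and does not require tracking $\f{sl}(2)$-weight conventions on the $\poly_2^m$ side — it computes the full $M$-action on $f_\ell$ at once — whereas your argument isolates exactly the invariant information (one character value on one weight vector) at the cost of having to keep the standard extension of $\poly_2^m$ to $SL^{\pm}(2,\R)$ and the identification of $v_1^m$ as its lowest weight vector honest, which you flag and handle correctly. Your Cartan-component cross-check via $S^m(\C^3)\otimes S^k((\C^3)^\vee)$ is also valid and is perhaps the cleanest way to see the sign without complexification; it could stand alone as a short alternative proof.
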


\begin{proof}
We wish to determine the character of $M\simeq SL(2,\R)^{\pm}$ on 
$W[-\tfrac{1}{2}(m+2k)]$. Let $v_\ell$ be a lowest weight vector
of $W[-\tfrac{1}{2}(m+2k)]$. Then $W[-\tfrac{1}{2}(m+2k)] = \Cal{U}(\fm)v_\ell$.
It follows from a direct computation that
\begin{equation*}
(M, \Ad, \fm) \simeq (SL^{\pm}(2,\R), \triv\otimes \Ad,\f{sl}(2,\C)).
\end{equation*}
Thus, to determine the character of $M$ on $W[-\tfrac{1}{2}(m+2k)]$, it suffices to 
observe the character of $M$ on $v_\ell$.

Let $B=TN^+_\C$ be the Borel subgroup of $G_\C:=SL(3,\C)$ consisting of 
upper triangular matrices with $T$ the group of diagonal matrices
and $N_\C$ the group of upper triangular matrices with $1$ on the diagonal.
As in \cite[Exer.\ 7.34]{Sepanski07},
we write $\xi_{(a,b)}\colon T \to \C^\times$ for the character of $T$
associated with $(a,b):=a\varpi_1+b\varpi_2$ for $a, b \in \Z$.
Then, by the Borel--Weil theorem and Weyl's unitary trick, the 
irreducible representation $V(m\varpi_1+k\varpi_2)$ of $G$ can be realized as
the space $\Cal{O}(G_\C, \C_{-(k,m)})^B$ of $B$-invariant 
holomorphic functions on $G_\C$, namely,
\begin{equation*}
\Cal{O}(G_\C, \C_{-(k,m)})^B
:=
\{f \in \Cal{O}(G_\C) : 
f(gtn)=\xi_{-(k,m)}(t^{-1})f(g)\; \text{for $tn \in TN^+_\C$}\},
\end{equation*}
where $G$ acts on $\Cal{O}(G_\C, \C_{-(k,m)})^B$ by left translation, i.e.,
$L(g_0)f(g):=f(g_0^{-1}g)$.
Then put
\begin{equation*}
f_\ell(g):=({\det}_1g)^{k}({\det}_2g)^{m},
\end{equation*}
where ${\det}_r(g_{i,j}):={\det}_{i,j\leq r}(g_{i,j})$.
One can easily check that $f_\ell(g)$ is a lowest weight vector of
$\Cal{O}(G_\C, \C_{-(k,m)})^B$.
A direct computation shows that, for $\widetilde{h}:=
\begin{pmatrix}
\det(h)^{-1} &\\
& h\\
\end{pmatrix}
\in M$ with $h \in SL^{\pm}(2,\R)$, we have
\begin{equation*}
L(\widetilde{h})f_\ell(g)=
f_\ell(\widetilde{h}^{-1}g)=\det(h)^{m+k}f_\ell(\begin{pmatrix}
1 &\\
& h^{-1}\\
\end{pmatrix}
g),
\end{equation*}
which shows that $\widetilde{h}$ acts on $\Cal{O}(G_\C, \C_{-(k,m)})^B$
as $\det(h)^{m+k} L\begin{pmatrix}
1 &\\
& h\\
\end{pmatrix}$.
Now the proposed assertion follows.
\end{proof}

\subsubsection{Construction of $\wT$}
\label{sec:wT}
Put
\begin{equation*}
W_+:=\bigoplus_{j=1}^{m+k}W[-\tfrac{1}{2}(m+2k)+\tfrac{3}{2}j],
\end{equation*}
the direct sum of the $MA$-components in \eqref{eqn:decomp} with $j> 0$, so that
\begin{equation*}
V(m\varpi_1 + k\varpi_2) = 
W[-\tfrac{1}{2}(m+2k)] \oplus W_+.
\end{equation*}
This allows to identify $MA$-representation $W[-\tfrac{1}{2}(m+2k)]$ with
\begin{equation}\label{eqn:WV}
W[-\tfrac{1}{2}(m+2k)]  \simeq V(m\varpi_1 + k \varpi_2)/W_+.
\end{equation}
It follows from \eqref{eqn:n+} that $W_+$ is an $\fn_+$-module; thus,
$W_+$ is indeed a $P$-representation. Via the identification \eqref{eqn:WV}, 
we regard $W[-\tfrac{1}{2}(m+2k)]$ as an irreducible $P$-representation.

Regarding a $G$-representation $V(m\varpi_1 + k\varpi_2)$ as a $P$-representation,
consider the induced representation from 
$V(m\varpi_1 + k\varpi_2)$:
\begin{equation*}
T_P(m\varpi_1 + k\varpi_2)
:=\Ind_{P}^G(V(m\varpi_1 + k\varpi_2)).
\end{equation*}
Then, via the $MA$-projection
\begin{equation}\label{eqn:proj1}
\proj\colon V(m\varpi_1 + k \varpi_2) \twoheadrightarrow 
V(m\varpi_1 + k \varpi_2)/W_+ \simeq W[-\tfrac{1}{2}(m+2k)]
\end{equation}
and the $MA$-module equivalence \eqref{eqn:wp}, 
one can define a $G$-intertwining map
\begin{equation}\label{eqn:proj2}
\Proj\colon 
T_P(m\varpi_1 + k \varpi_2) \To I(m, -\tfrac{1}{2}(m+2k))^{m+k},
\quad 
F\mapsto \proj \circ F.
\end{equation}

\vskip 0.1in

Now, let $\sigma\colon G \to GL(V(m\varpi_1 + k \varpi_2))$ denote the representation
of $G$ on $V(m\varpi_1 + k \varpi_2)$. For $v \in V(m\varpi_1 + k \varpi_2)$, define
\begin{align*}
\Cal{T}\colon V(m\varpi_1 + k \varpi_2) 
&\To C^\infty(G)\otimes V(m\varpi_1 + k \varpi_2)\\
v &\longmapsto f_v(g) :=\sigma(g^{-1})v.
\end{align*}
By definition, the map $\Cal{T}$ 
is an injective intertwining operator:
\begin{equation}\label{eqn:xv2}
\Cal{T}\colon V(m\varpi_1 + k\varpi_2) 
\longhookrightarrow C^\infty(G)\otimes V(m\varpi_1 + k\varpi_2).
\end{equation}
Further, as $f_v(gp)=\sigma(p^{-1})f_v(g)$ for $p \in P$, 
we have $f_v(g) \in T_P(m\varpi_1 + k \varpi_2)$. Thus,
\begin{equation}\label{eqn:xv}
\Cal{T}\colon V(m\varpi_1 + k \varpi_2) 
\longhookrightarrow T_P(m\varpi_1 + k \varpi_2).
\end{equation}
By composing $\Cal{T}$ in \eqref{eqn:xv2} with $\Proj$ in \eqref{eqn:proj2}, we obtain
an injective $G$-intertwining operator
\begin{equation}\label{eqn:Tmap}
\wT\colon V(m\varpi_1 + k \varpi_2) 
\longhookrightarrow 
I(m, -\tfrac{1}{2}(m+2k))^{m+k},
\quad v \mapsto (\Proj \circ \Cal{T})(v).
\end{equation}

\begin{prop}\label{prop:TC}
We have 
\begin{equation*}
\Im \wT \subset \Ker \left(\Cal{C}^{m+k+1}_{m,k+1} \right)^{m+k}.
\end{equation*}
\end{prop}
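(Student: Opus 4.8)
The plan is to prove the equivalent and slightly sharper statement that the composite
\[
\Cal{C}^{m+k+1}_{m,k+1}\circ\wT\colon V(m\varpi_1+k\varpi_2)\To I(m+k+1,\tfrac{1}{2}(3-m+k))^{m+1}
\]
is the zero map; since $\wT$ takes values in $I(m,-\tfrac{1}{2}(m+2k))^{m+k}$, which is the source of $\Cal{C}^{m+k+1}_{m,k+1}$ in \eqref{eqn:Cartan} (with $\ga=m+k$, so that $\ga+k+1$ has parity $m+1$), this immediately gives $\Im\wT\subseteq\Ker(\Cal{C}^{m+k+1}_{m,k+1})^{m+k}$. Both $\wT$ (by its construction in \eqref{eqn:Tmap}) and $\Cal{C}^{m+k+1}_{m,k+1}$ (by Theorem \ref{thm:IDO2}) are $G$-intertwining, so the composite is a $G$-map out of the finite-dimensional irreducible $V(m\varpi_1+k\varpi_2)$, and it suffices to show $\Hom_G\big(V(m\varpi_1+k\varpi_2),\,I(m+k+1,\tfrac{1}{2}(3-m+k))^{m+1}\big)=\{0\}$.

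First I would apply Frobenius reciprocity for smooth induction: since $V(m\varpi_1+k\varpi_2)$ is finite-dimensional, evaluation at the identity gives a natural isomorphism
\[
\Hom_G\big(V(m\varpi_1+k\varpi_2),\,\Ind_P^G\big((\C_{m+1}\otimes\poly_2^{m+k+1})\boxtimes\C_{\frac{1}{2}(3-m+k)}\big)\big)\simeq\Hom_P\big(V(m\varpi_1+k\varpi_2),\,(\C_{m+1}\otimes\poly_2^{m+k+1})\boxtimes\C_{\frac{1}{2}(3-m+k)}\big)
\]
(this is, in fact, precisely the recipe by which $\wT$ was produced from $\proj$ in \eqref{eqn:proj2}--\eqref{eqn:Tmap}). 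Because $N_+$ acts trivially on the target, any such $P$-homomorphism annihilates $\fn_+\cdot V(m\varpi_1+k\varpi_2)$ and hence factors through the $MA$-module $V(m\varpi_1+k\varpi_2)/\fn_+V(m\varpi_1+k\varpi_2)$. Using the grading analysis of Section \ref{sec:grading} — $\fn_+$ raises the $\wH_0$-degree by $\tfrac{3}{2}$ (see \eqref{eqn:n+}) and $V(m\varpi_1+k\varpi_2)=\Cal{U}(\fn_+)v_\ell$ with $v_\ell$ a lowest weight vector lying in the bottom graded piece $W[-\tfrac{1}{2}(m+2k)]$ — one obtains $\fn_+V(m\varpi_1+k\varpi_2)=\bigoplus_{j=1}^{m+k}W[-\tfrac{1}{2}(m+2k)+\tfrac{3}{2}j]$, so that, as $MA$-modules,
\[
V(m\varpi_1+k\varpi_2)/\fn_+V(m\varpi_1+k\varpi_2)\simeq W[-\tfrac{1}{2}(m+2k)]\simeq(\C_{m+k}\otimes\poly_2^m)\boxtimes\C_{-\frac{1}{2}(m+2k)}
\]
by Proposition \ref{prop:MA}. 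The $\Hom$-space therefore reduces to $\Hom_{MA}\big((\C_{m+k}\otimes\poly_2^m)\boxtimes\C_{-\frac{1}{2}(m+2k)},\,(\C_{m+1}\otimes\poly_2^{m+k+1})\boxtimes\C_{\frac{1}{2}(3-m+k)}\big)$, which vanishes already at the level of $SL(2,\R)$-types because $\poly_2^m\not\simeq\poly_2^{m+k+1}$ (here $m\neq m+k+1$ since $k\geq0$). Hence the composite is zero, and the proposition follows.

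The only step that is not purely formal — the ``main obstacle,'' modest as it is — is the identification of $V(m\varpi_1+k\varpi_2)/\fn_+V(m\varpi_1+k\varpi_2)$ as an $MA$-module; concretely, one must verify that $\fn_+V(m\varpi_1+k\varpi_2)$ is exactly the sum of the strictly positive degree graded pieces (equivalently, that $\fn_+$ carries each graded piece onto the next, which follows from $V(m\varpi_1+k\varpi_2)=\Cal{U}(\fn_+)v_\ell$ together with the degree bookkeeping of Section \ref{sec:grading}), so that the quotient is the bottom piece $W[-\tfrac{1}{2}(m+2k)]$ computed in Proposition \ref{prop:MA}. Everything else is Frobenius reciprocity together with the inequivalence of the relevant $SL(2,\R)$-irreducibles; in particular no parity/character information about $M$ is needed for the vanishing.
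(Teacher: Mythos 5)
Your proof is correct, and it takes a genuinely different route from the one in the paper. You argue abstractly: by Frobenius reciprocity the composite $\Cal{C}^{m+k+1}_{m,k+1}\circ\wT$ corresponds to an element of $\Hom_P\bigl(V(m\varpi_1+k\varpi_2),\,(\C_{m+1}\otimes\poly_2^{m+k+1})\boxtimes\C_{\frac{1}{2}(3-m+k)}\bigr)$, which factors through the $\fn_+$-coinvariants of $V(m\varpi_1+k\varpi_2)$; you identify these with the bottom graded piece $W[-\tfrac{1}{2}(m+2k)]\simeq(\C_{m+k}\otimes\poly_2^m)\boxtimes\C_{-\frac{1}{2}(m+2k)}$ and conclude by the inequivalence $\poly_2^m\not\simeq\poly_2^{m+k+1}$. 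The paper instead works concretely in the non-compact picture: using $d\pi_\sigma(N^-_j)=-\partial/\partial x_j$ and the fact that $\fn_-$ annihilates the lowest weight vector $v_\ell$, it shows $\wT(v_\ell)$ is a constant function, which is killed by the explicit differential operator \eqref{eqn:IDO1}; then $G$-invariance of the kernel gives the inclusion. Your approach is cleaner in the sense that it never uses the explicit formula for $\Cal{C}^{m+k+1}_{m,k+1}$ (only that it is $G$-intertwining) and exposes a purely representation-theoretic obstruction; the paper's approach is more hands-on and displays directly what the embedded vectors look like on the big cell. One small caveat worth noting (a sloppiness already present in the paper that you inherit): $V(m\varpi_1+k\varpi_2)$ is not $\Cal{U}(\fn_+)v_\ell$ in general, but rather $\Cal{U}(\fn_+)W_\ell$ where $W_\ell=\Cal{U}(\fm)v_\ell=W[-\tfrac{1}{2}(m+2k)]$ (e.g.\ for the adjoint representation $m=k=1$ the bottom graded piece is $2$-dimensional while $\Cal{U}(\fn_+)v_\ell$ contributes only a line in degree $0$); with $V=\Cal{U}(\fn_+)W_\ell$ and $\fn_+$ abelian one still gets $W[-\tfrac{1}{2}(m+2k)+\tfrac{3}{2}j]=S^j(\fn_+)W_\ell$ and hence $\fn_+V=\bigoplus_{j\geq 1}W[-\tfrac{1}{2}(m+2k)+\tfrac{3}{2}j]$, so your conclusion about the coinvariants stands.
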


\begin{proof}
The proposition can be shown 
in the same line with the proof for \cite[Thm.\ 6.5]{KuOr25a}.
Indeed, let $d\sigma$ denote the differential of the representation $\sigma$ 
of $G$ on $V(m\varpi_1+k\varpi_2)$.
Then the representation $d\pi_\sigma$ of
$\fg=\fn_-\oplus \fl \oplus \fn_+$ with $\fl:=\fm\oplus \fa$ 
on the non-compact picture of the induced representation
$T_P(m\varpi_1 + k \varpi_2)  \subset C^\infty(N_-)\otimes V(m\varpi_1+k\varpi_2)$
is given by 
\begin{equation}\label{eqn:dpi}
d\pi_\sigma(X) F(\bar{n})= 
d\sigma((\Ad(\bar{n}^{-1})X)_{\fl})F(\bar{n})-(dR((\Ad(\cdot^{-1})X)_{\fn_-})F)(\bar{n})
\end{equation}
for $X \in \fg$ and $F(\bar{n})\in C^\infty(N_-)\otimes  V(m\varpi_1 + k \varpi_2)$.
Here $(\cdot)_{\fl}$ and $(\cdot)_{\fn_-}$ denote the projections 
$\fg \twoheadrightarrow \fl$ and 
$\fg \twoheadrightarrow \fn_-$, respectively,
and $dR$ is the infinitesimal right translation (see, for instance, \cite[(2.4)]{KuOr25a}). 
In particular, as $\fn_-$ is abelian, for $U \in \fn_-$,  we have 
\begin{equation*}
d\pi_\sigma(U)F(\bar{n})=-dR(U)F(\bar{n}).
\end{equation*}
Further, a direct computation shows that
\begin{equation}\label{eqn:dpiN}
d\pi_\sigma(N^-_j)F(x_1, x_2) = -\frac{\partial}{\partial x_j}F(x_1, x_2)
\end{equation}
in the local coordinates \eqref{eqn:coord}
for $F(x_1, x_2) \in C^\infty(\R^2) \otimes V(m\varpi_1+k\varpi_2)$,
where $N_j^-$ $(j=1,2)$ are the vectors of $\fn_-$ defined in \eqref{eqn:Npm}.

Now take a lowest weight vector $v_\ell \in V(m\varpi_1 + k \varpi_2)$.
As $N_-$ acts on $v_\ell$ trivially, we have 
$d\pi_\sigma(N_j^-)\Cal{T}(v_\ell)=0$
in the non-compact picture of 
$T_P(m\varpi_1 + k \varpi_2)$.
It then follows from \eqref{eqn:dpiN} that
$\frac{\partial}{\partial x_j} v_\ell = 0$ for $j=1, 2$
in the local coordinates \eqref{eqn:coord}. Thus, $\Cal{T}(v_\ell)$ is a constant
function in the non-compact picture of $T_P(m\varpi_1 + k \varpi_2)$;
consequently, so is
$\wT(v_\ell) \in C^\infty(N_-) \otimes  W[-\frac{1}{2}(m+2k)]$ 
in the non-compact picture of $I(m, -\tfrac{1}{2}(m+2k))^{m+k}$.
 It then follows from \eqref{eqn:20250811} that
$\Cal{C}^{m+k+1}_{m,k+1}\wT(v_\ell)=0$. Therefore,
$\wT(v_\ell) \in \Ker(\Cal{C}^{m+k+1}_{m,k+1})$.
Now the proposed assertion follows from the fact that
$\wT( V(m\varpi_1 + k \varpi_2)) = \Cal{U}(\fn_+)\wT(v_\ell)$
and the $G$-invariance of $\Ker(\Cal{C}^{m+k+1}_{m,k+1})$.
\end{proof}

\begin{rem}
The image $\wT(V(k \varpi_2))$ for $m=0$ is explicitly given
 in \cite[Prop.\ 8.5]{Kubo24+}.
\end{rem}

Now we are ready to show Theorem \ref{thm:BGGres}.

\begin{proof}[Proof of Theorem \ref{thm:BGGres}]
It follows from Theorem \ref{thm:CPRV} and Proposition \ref{prop:TC} that
the sequence \eqref{eqn:BGGres} is a complex. Now one can compare 
\eqref{eqn:BGGres} with the one in \cite[Sect.\ 4]{EG11} to conclude that 
it is in fact a resolution.
\end{proof}

A similar idea to the proof of Proposition \ref{prop:TC} shows the following.

\begin{prop}\label{prop:KI}
We have
\begin{enumerate}
\item[\emph{(1)}] $\Ker(\Cal{C}^{m+k+1}_{m,k+1})^{m+k+1}=\{0\}$,
\vskip 0.1in

\item[\emph{(2)}] $\Im(\Cal{C}^{m+k+1}_{m,k+1})^{m} 
\simeq I(m, -\tfrac{1}{2}(m+2k))^{m+k+1}$.
\end{enumerate}
\end{prop}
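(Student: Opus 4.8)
The plan is that both assertions follow with essentially no new computation, all of the substance having already been isolated in Proposition~\ref{prop:Ker}.

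For part (1), I would simply note that this is verbatim the statement of Proposition~\ref{prop:Ker}. It is worth recalling why that holds: the Cartan operator $\Cal{C}^{m+k+1}_{m,k+1}$, being built from the Cartan component, is elliptic (cf.~\cite{KOPWZ96}), so on the compact manifold $G/P\simeq\RP^2$ its kernel is finite-dimensional; hence if it were non-zero it would contain a lowest weight vector, which — by the same constancy argument as in the proof of Proposition~\ref{prop:TC} — is a constant function in the non-compact picture, and this forces the fibre $M$-representation $\C_{m+k+1}\otimes\poly_2^m$ to coincide with $\C_{m+k}\otimes\poly_2^m$, a contradiction on parities.

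For part (2), I would deduce it from part (1). With the flipped parity the Cartan operator is the $G$-intertwining differential operator
\begin{equation*}
\Cal{C}^{m+k+1}_{m,k+1}\colon I(m,-\tfrac{1}{2}(m+2k))^{m+k+1}\To I(m+k+1,\tfrac{1}{2}(3-m+k))^{m},
\end{equation*}
and by part (1) its kernel is $\{0\}$, so it is injective. Since it is elliptic and $G/P$ is compact, it has closed range, and the open mapping theorem for Fr\'echet spaces then shows that $\Cal{C}^{m+k+1}_{m,k+1}$ induces a topological $G$-isomorphism of $I(m,-\tfrac{1}{2}(m+2k))^{m+k+1}$ onto its image; that is, $\Im(\Cal{C}^{m+k+1}_{m,k+1})^{m}\simeq I(m,-\tfrac{1}{2}(m+2k))^{m+k+1}$. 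If one only wants an algebraic isomorphism of $G$-representations, injectivity from part (1) already suffices and the closed-range observation can be dropped.

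There is no real obstacle here; the one point to spell out carefully in part (2) is the precise sense in which "$\simeq$" is meant, and the genuine content of the proposition lies entirely in the ellipticity argument of Proposition~\ref{prop:Ker} recalled above.
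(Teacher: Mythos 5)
Your proposal is correct and follows essentially the same route as the paper: part (1) is literally Proposition~\ref{prop:Ker} (and your recap of the ellipticity/constancy/parity argument matches its proof), and part (2) is deduced from the resulting injectivity, exactly as in the paper's one-line proof via exactness of the two-term sequence. The added remark about closed range and the open mapping theorem makes the topological sense of the isomorphism explicit, which the paper leaves implicit, but it is not a different approach.
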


\begin{proof}
The second statement follows from the first as the sequence
\begin{equation*}
0\To
I(m, -\tfrac{1}{2}(m+2k))^{m+k+1}
\stackrel{\Cal{C}^{m+k+1}_{m,k+1}}{\To}
I(m+k+1, \tfrac{1}{2}(3-m+k))^{m}
\end{equation*}
is exact. Thus, it suffices to show the first assertion.

Assume the contrary, namely,
$\Ker \left(\Cal{C}^{m+k+1}_{m,k+1} \right)^{m+k+1}\neq \{0\}$. 
Observe that as the Cartan operator $\Cal{C}^{m+k+1}_{m,k+1}$ is 
obtained from a Cartan component, it is an elliptic operator 
(cf.\ \cite{KOPWZ96}).
Thus, the kernel $\Ker(\Cal{C}^{m+k+1}_{m,k+1})^{m+k+1}
\subset I(m, -\tfrac{1}{2}(m+2k))^{m+k+1}$ 
is finite-dimensional. In particular, the $G$-representation
$\Ker(\Cal{C}^{m+k+1}_{m,k+1})^{m+k+1}$ has a non-zero lowest weight vector $v_0$.

Let $\tau$ denote the $MA$-representation on the fiber 
$W:=(\C_{m+k+1} \otimes \poly^{m}_2) \boxtimes \C_{-\tfrac{1}{2}(m+2k)}$
of $I(m, -\tfrac{1}{2}(m+2k))^{m+k+1}$. Then, as in  \eqref{eqn:dpi},
the representation $d\pi_\tau$ of $\fg$ on the non-compact picture of 
$I(m, -\tfrac{1}{2}(m+2k))^{m+k+1} \subset C^\infty(N_-)\otimes W$
is given by 
\begin{equation*}
d\pi_\tau(X) F(\bar{n})= 
d\tau((\Ad(\bar{n}^{-1})X)_{\fl})F(\bar{n})-(dR((\Ad(\cdot^{-1})X)_{\fn_-})F)(\bar{n})
\end{equation*}
for $X \in \fg$ and $F(\bar{n})\in C^\infty(N_-)\otimes W$.
In particular, for $Z \in \fm\oplus \fa$,
we have
\begin{equation*}
d\pi_\tau(Z)F(\bar{n})=d\tau(Z)F(\bar{n})-(dR(\Ad(\cdot^{-1})Z-Z)F)(\bar{n}).
\end{equation*}
The same arguments as in the proof of Proposition \ref{prop:TC} shows that 
the lowest weight vector $v_0$ is a constant function in the non-compact picture. 
Thus, $d\pi_\tau(Z)v_0(\bar{n})=d\tau(Z)v_0(\bar{n})$.
As $v_0(\bar{n})$ being a lowest weight vector in $I(m, -\tfrac{1}{2}(m+2k))^{m+k+1}$,
we have $d\tau(Z^-)v_0(\bar{n})=d\pi_\tau(Z^-)v_0(\bar{n})=0$ for all
negative root vectors $Z^-\in \fm$. Thus, $v_0(\bar{n})$ can be thought of as a lowest weight vector
of $\poly^{m}_2 \boxtimes \C_{-\tfrac{1}{2}(m+2k)}$. 
Since the weight of $v_0(\bar{n})$ is the same as the lowest weight of $V(m\varpi_1+k\varpi_2)$,
this shows that $d\pi_\tau(\Cal{U}(\fg))v_0(\bar{n}) \simeq V(m\varpi_1+k\varpi_2)$
as $G$-modules;
in particular, 
$d\pi_\tau(\Cal{U}(\fm\oplus \fa))v_0(\bar{n})
\simeq W[-\tfrac{1}{2}(m+2k)]$
as $MA$-modules. 

Now, let $\delta_\tau$ denote the action of $MA$
on $C^\infty(N_-)\otimes W$ induced by the left translation 
on $I(m, -\tfrac{1}{2}(m+2k))^{m+k+1}$, that is,  
for $F(\bar{n}) \in C^\infty(N_-)\otimes W$, we have
\begin{equation*}
\delta_\tau(\ell)F(\bar{n})=\tau(\ell)F(\ell^{-1}\bar{n}\ell).
\end{equation*}
As $v_0(\bar{n})$ is a constant function, 
we have $v_0(\ell^{-1} \bar{n}\ell) = v_0(\bar{n})$ for $\ell \in MA$.
Thus,
\begin{align*}
\delta_\tau(\ell)d\pi_\tau(Z)v_0(\bar{n})
&=\tau(\ell)d\pi_\tau(Z)v_0(\ell^{-1}\bar{n}\ell)\\
&=\tau(\ell)d\tau(Z)v_0(\ell^{-1}\bar{n}\ell)\\
&=\tau(\ell)d\tau(Z)v_0(\bar{n})\\
&=\tau(\ell)d\pi_\tau(Z)v_0(\bar{n})
\end{align*}
for $Z \in \fm\oplus \fa$ and $\ell \in MA$,
which yields that
$d\pi_\tau(\Cal{U}(\fm\oplus \fa))v_0(\bar{n})\ \simeq W$ as $MA$-modules.
Therefore, we have the isomorphisms of $M$-representations
\begin{equation*}
\C_{m+k+1}\otimes \poly_2^m \simeq 
W\vert_M\simeq d\pi_\tau(\Cal{U}(\fm\oplus \fa))v_0(\bar{n})
\simeq W[-\tfrac{1}{2}(m+2k)]\vert_M
\simeq \C_{m+k}\otimes \poly_2^m,
\end{equation*}
which is a contradiction. Now the proposed assertion follows.
\end{proof}

\section{Unitary highest weight modules of $SU(1,2)$}\label{sec:SU}

The aim of the last section is to exploit the results from the previous section to classify 
the irreducible unitary highest weight modules of $SU(1,2)$ at the (first) reduction 
points. This is achieved in Theorem \ref{thm:UHW}.

\subsection{Preliminaries}
Let $G_u:=SU(1,2)$. The maximal compact subgroup $K$ is 
$K=S(U(1)\times U(2)) \simeq U(2)$. 
The complexified Lie algebra of $G_u$ is $\fg = \f{sl}(3,\C)$ and that of 
$K$ is $\fk=\fm\oplus \fa\simeq \f{gl}(2,\C)$. 
The Cartan decomposition of the complex Lie algebra
$\fg$ is then given by $\fg = \fk \oplus \fs$ with $\fs=\fn_+ \oplus \fn_-$. 
Let $G_\C$ and $K_\C$ denote the complexification of $G_u$ and $K$, respectively.
Then $G_\C = SL(3,\C)$ and $K_\C= GL(2,\C)$.
Write 
$N^{\pm}_\C$ for the analytic subgroup of $G_\C$
 with Lie algebra $\fn_\pm$.
Then  $\bar{P}_\C:=K_\C N^-_\C$ is a parabolic subgroup of $G_\C$ with
Lie algebra $\bar{\fp}=\fk \oplus \fn_-=\fm\oplus\fa\oplus \fn_-$. Via the Borel imbedding,
we realize $G_u/K$ as an open subset of $N^+_\C \bar{P}_\C$, which is open dense in $G_\C/\bar{P}_\C$ (cf.\ \cite{Wolf72}): 
\begin{equation*}
G_u/K \stackrel{\text{open}}{\subset} N^+_\C \bar{P}_\C 
\stackrel[\text{dense}]{\text{open}}{\subset} G_\C/\bar{P}_\C.
\end{equation*}

A holomorphic representation $V$ of $K_\C$ is regarded as 
a $P_\C:=K_\C N^+_\C$ and $\bar{P}_\C$-representation by letting 
$N_\C^\pm$ act trivially. Then we write
$\Cal{V}_\C:=G_\C\times_{\bar{P}_\C}V \to G_\C/\bar{P}_\C$ for the $G_\C$-equivariant
holomorphic vector bundle over $G_\C/\bar{P}_\C$. The restriction to the open set $G_u/K$ defines a holomorphic vector bundle $\Cal{V}_{K}:=G_u \times_K V$ over $G_u/K$.
We then obtain a $G_\C$-representation 
$\Cal{O}(G_\C/\bar{P}_\C,\Cal{V}_\C)$ and 
$G_u$-representation $\Cal{O}(G_u/K,\Cal{V}_K)$ 
on the spaces of holomorphic sections
on $G_\C/\bar{P}_\C$ and $G_u/K$, respectively.

In the theory of Davidson--Enright--Stanke (\cite{DES90, DES91}), 
it plays a role that
the $\fk$-representation $V$ of the holomorphic
vector bundle $\Cal{V}_\C=G_\C\times_{\bar{P}_\C}V$ is also
the inducing $\fk$-representation of a generalized Verma module. Indeed, for the space 
$\Cal{O}(G_\C/\bar{P}_\C,\Cal{V}_\C)_K$ of $K$-finite vectors 
of $\Cal{O}(G_\C/\bar{P}_\C,\Cal{V}_\C)$, we have
\begin{align*}
\Cal{O}(G_\C/\bar{P}_\C,\Cal{V}_\C)_K 
= \Pol(\fn_+)\otimes V
\simeq S(\fn_-)\otimes V
\simeq M_\fp(V).
\end{align*}
In order to apply their theory,
we consider $V = \sym_2^m \boxtimes \C_{-\lambda}$ 
instead of $\poly_2^m \boxtimes \C_{\lambda}$
for $\Cal{V}_\C$ so that $V$ is the same as the inducing $\fk$-representation
of the generalized Verma module in the previous sections, where
$\sym_2^m \simeq (\poly_2^m)^\vee \simeq \poly_2^m$.
Then, as in \eqref{eqn:Ind}, for $V = \sym_2^m \boxtimes \C_{-\lambda}$
we put
\begin{align*}
\Cal{O}_{\bar{P}_\C}(m,\lambda):= \Cal{O}(G_\C/\bar{P}_\C, \Cal{V}_\C)
\quad
\text{and}
\quad
\Cal{O}_{K}(m,\lambda):= \Cal{O}(G_u/K, \Cal{V}_K).
\end{align*}

For the rest of this section, we resume the notation defined in 
Sections \ref{sec:Step4} and \ref{sec:main6}.
For instance, the Cartan subalgebra $\fh$ of $\fg$ consists of trace-free 
diagonal matrices and
 $\varpi_1$ denotes the first fundamental weight.
As in \eqref{eqn:Pell}, we write 
$\mathbf{P}^+_{\fk}$ for the set of $\fk$-dominant integral weights.
The sets of positive compact roots $\gD^+_c$ 
and positive noncompact roots $\gD^+_n$
are 
\begin{equation*}
\gD^+_c=\{\eps_2-\eps_3\}
\quad 
\text{and}
\quad
\gD^+_n=\{\eps_1-\eps_2, \eps_1-\eps_3\}.
\end{equation*}

\subsection{Classification of irreducible unitary highest weight modules at reduction points}
For $\mu \in \mathbf{P}^+_{\fk}$, put
$\mathbb{L}(\mu):=
\{\mu+z\varpi_1:z \in \R\}$.
According to the classification theory of Enright--Howe--Wallach \cite{EHW83},
there exists $\mu_0 \in \mathbb{L}(\mu)$ such that  
the irreducible highest weight module $L(\mu_0)$
corresponds to a limit of discrete series representation. We then put
\begin{equation*}
\mathbb{L}(\mu_0)_u:=\{\mu_0+z\varpi_1 \subset  \mathbb{L}(\mu_0):
\text{$L(\mu_0+z\varpi_1)$ is unitarizable}\}.
\end{equation*}
It follows from the \cite{EHW83} that $\mathbb{L}(\mu_0)_u$ takes the following 
form:

\vskip 0.3in


\begin{center}
\begin{tikzpicture}[xscale = 1]    
\tikzset{axes/.style={}}
 
\draw[black, name path = para] plot[domain = -2:4, samples = 100]
({\x}, {3});
  
\draw[black, fill = black] (4, 3) circle[radius = .07cm];

\node[font = \normalsize, below, black] at (4, 3) {\(\text{A}\)};
\end{tikzpicture}
\end{center}


The right end point A is called the first reduction point.
It is remarked that since $G_u = SU(1,2)$ has split rank one, there exists only one 
reduction point for each line. Then we put
\begin{align*}
&\widehat{(G_u)}_{h,r}\\
&:=
\{\text{irreducible unitary highest weight modules of $G_u$  
at the (first) reduction points}\}/\sim.
\end{align*}

Now observe that, 
by the same arguments in Section \ref{sec:wT},
one can define the injective $G$-intertwining operator
\begin{equation}\label{eqn:TO}
\wT\colon V(m\varpi_1 + k \varpi_2) \hookrightarrow \Cal{O}_{\bar{P}_\C}(m, \tfrac{1}{2}(m+2k)).
\end{equation}
Here, as we consider $(-\lambda d\chi, \C)$ instead of $(\lambda d\chi, \C)$
for the fiber of the induced representation, we have
$\frac{1}{2}(m+2k)$ in place of $-\frac{1}{2}(m+2k)$  in \eqref{eqn:TO}.
Then, as in \eqref{eqn:BGGres}, we have 
an exact sequence
\begin{equation*}
\begin{aligned}
0\To
V(m\varpi_1 + k \varpi_2)
&\stackrel{\wT}{\To}
\Cal{O}_{\bar{P}_\C}(m, \tfrac{1}{2}(m+2k))\\
&\stackrel{\Cal{C}^{m+k+1}_{m,k+1}}{\To}
\Cal{O}_{\bar{P}_\C}(m+k+1, -\tfrac{1}{2}(3-m+k))\\[5pt]
&\stackrel{\Cal{P}^{k}_{m+k+1,m+1}}{\To}
\Cal{O}_{\bar{P}_\C}(k, -\tfrac{1}{2}(6+2m+k)))
\To 0,
\end{aligned}
\end{equation*}
which gives rise to an exact sequence for $\Cal{O}_K(m,-\lambda)$:
\begin{equation}\label{eqn:BGGgeneral3}
\begin{aligned}
0\To
V(m\varpi_1 + k \varpi_2)
&\stackrel{\wT}{\To}
\Cal{O}_{K}(m, \tfrac{1}{2}(m+2k))\\
&\stackrel{\Cal{C}^{m+k+1}_{m,k+1}}{\To}
\Cal{O}_{K}(m+k+1, -\tfrac{1}{2}(3-m+k))\\[5pt]
&\stackrel{\Cal{P}^{k}_{m+k+1,m+1}}{\To}
\Cal{O}_{K}(k, -\tfrac{1}{2}(6+2m+k)))
\To 0,
\end{aligned}
\end{equation}
where the differential operators
$\Cal{C}^{m+k+1}_{m,k+1}$ and $\Cal{P}^{k}_{m+k+1,m+1}$
are defined as in Section \ref{sec:construction} via the local coordinates
\begin{equation}\label{eqn:coord2}
\C^2 
\stackrel{\sim}{\To} N^+_\C, \quad (z_1, z_2) 
\mapsto \exp(z_1 N^+_1 +z_2N^+_2).\\[3pt]
\end{equation}
We remark that the orders of 
$\Cal{C}^{m+k+1}_{m,k+1}$ and $\Cal{P}^{k}_{m+k+1,m+1}$
are $k+1$ and $m+1$, respectively. 

Let
$\Ker_{\Cal{O}}(\Cal{C}^{m+k+1}_{m,k+1})$ denote
the kernel of 
$\Cal{C}^{m+k+1}_{m,k+1}$ as an operator in 
\eqref{eqn:BGGgeneral3}. The spaces
 $\Im_{\Cal{O}}(\Cal{C}^{m+k+1}_{m,k+1})$ and 
$\Ker_{\Cal{O}}(\Cal{P}^{k}_{m+k+1,m+1})$ are defined, similarly.
Then, combining the resolution \eqref{eqn:BGGgeneral3}
with the theory of Davidson--Enright--Stanke \cite{DES90, DES91}
on covariant differential operators, 
the set $\widehat{(G_u)}_{h,r}$ can be classified as follows.

\begin{thm}\label{thm:UHW}
We have
\begin{align}
\widehat{(G_u)}_{h,r}
&=
\{ \Ker_{\Cal{O}}(\Cal{C}^{1}_{0,1})\}
\cup
\{ \Ker_{\Cal{O}}(\Cal{P}^{k}_{k+1,1}):k \in \Z_{\geq 0}\}\label{eqn:CPU}\\[3pt]
&=
\{ \Ker_{\Cal{O}}(\Cal{C}^{1}_{0,1})\}
\cup
\{ \Im_{\Cal{O}}(\Cal{C}^{k+1}_{0,k+1}):k \in \Z_{\geq 0}\}. \label{eqn:CPU2}
\end{align}
\end{thm}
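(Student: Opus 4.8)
The plan is to establish Theorem~\ref{thm:UHW} by transporting the BGG resolution \eqref{eqn:BGGgeneral3} through the unitarizability theory of Davidson--Enright--Stanke and identifying which parameters $(m,k)$ land at a first reduction point. First I would recall from Enright--Howe--Wallach that for a fixed $\mu\in\mathbf{P}^+_{\fk}$ the first reduction point on the line $\mathbb{L}(\mu_0)$ is precisely the largest $z$ for which $L(\mu_0+z\varpi_1)$ is unitarizable, and that at this point the module $L(\mu_0)$ is \emph{not} a (limit of) discrete series but is still unitary; equivalently the corresponding generalized Verma module $M_\fp(V)$ becomes reducible and $L$ is its unitarizable irreducible quotient. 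So the task is: (i) show every module appearing as a kernel in \eqref{eqn:CPU} is an irreducible unitary highest weight module at a first reduction point, and (ii) show conversely that every element of $\widehat{(G_u)}_{h,r}$ arises this way.

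For direction (i), the key observation is that exactness of \eqref{eqn:BGGgeneral3} gives, on $K$-finite vectors,
\begin{equation*}
\Ker_{\Cal{O}}(\Cal{C}^{m+k+1}_{m,k+1})\simeq V(m\varpi_1+k\varpi_2),
\end{equation*}
which is finite-dimensional (hence $L(\mu)$ for $\mu$ dominant integral, a degenerate case), while
\begin{equation*}
\Ker_{\Cal{O}}(\Cal{P}^{k}_{m+k+1,m+1})=\Im_{\Cal{O}}(\Cal{C}^{m+k+1}_{m,k+1})\simeq \Cal{O}_K(m,\tfrac{1}{2}(m+2k))/V(m\varpi_1+k\varpi_2).
\end{equation*}
By the duality between $\Cal{O}_K$ and generalized Verma modules noted before Theorem~\ref{thm:UHW}, the latter is exactly the irreducible quotient $L$ of the reducible $M_\fp(m,\tfrac12(m+2k))$ whose submodule is the image of $\varphi^{m+k+1}_{m,k+1}$ in \eqref{eqn:BGGgeneral2}. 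I would then invoke the DES classification to check that the infinitesimal character of this $L$, namely $m\varpi_1+k\varpi_2+\rho$ up to the normalization in Section~\ref{sec:Step4}, is exactly that of a module sitting at the first reduction point: concretely the weight $-\tfrac12(m+2k)d\chi + m\omega_1$ lies at the right endpoint A of $\mathbb{L}(\mu_0)_u$ because the PRV operator $\Cal{P}^{k}_{m+k+1,m+1}$ is nonzero (Theorem~\ref{thm:Verma}), and by DES the existence of such a nonzero covariant PRV operator out of $\Cal{O}_{\bar P_\C}$ is the \emph{defining} phenomenon at a reduction point. Running $(m,k)$ over $\Z_{\geq 0}^2$ with $m=0$ separately (where $\Cal{P}^{k}_{k+1,1}$ has a genuine tensor-product-free source) recovers the two families in \eqref{eqn:CPU}; the extra term $\Ker_{\Cal{O}}(\Cal{C}^1_{0,1})=V(0)=\C_{\triv}$ is the trivial representation, which is the reduction point of its own line.

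For direction (ii), I would use that $G_u=SU(1,2)$ has split rank one, so each line $\mathbb{L}(\mu)$ contains exactly one reduction point; hence $\widehat{(G_u)}_{h,r}$ is parametrized by $\mathbf{P}^+_{\fk}\simeq \Z_{\geq 0}\times\Z$ modulo the choice of line, and I must match this parameter set bijectively with $\{(m,k)\}\cup\{\text{trivial}\}$. The $\fk$-lowest-$K$-type of $\Ker_{\Cal{O}}(\Cal{P}^k_{m+k+1,m+1})$ can be read off from the fiber $\sym_2^{m}\boxtimes\C_{\pm\cdots}$ at the bottom of the grading (as in Lemma~\ref{lem:rs} and Proposition~\ref{prop:MA}), and matching lowest $K$-types with the EHW parametrization of reduction-point modules shows the map $(m,k)\mapsto$ (that module) is injective and surjective onto $\widehat{(G_u)}_{h,r}$. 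Finally \eqref{eqn:CPU2} follows from \eqref{eqn:CPU} by substituting the bijection $\Theta$ of Theorem~\ref{thm:CPRV} in the holomorphic setting: $\Theta(\Cal{C}^{m+k+1}_{m,k+1})=\Cal{P}^{k}_{m+k+1,m+1}$ and exactness gives $\Ker_{\Cal{O}}(\Cal{P}^k_{m+k+1,m+1})=\Im_{\Cal{O}}(\Cal{C}^{m+k+1}_{m,k+1})$, which for $m=0$ reads $\Ker_{\Cal{O}}(\Cal{P}^k_{k+1,1})=\Im_{\Cal{O}}(\Cal{C}^{k+1}_{0,k+1})$.

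I expect the main obstacle to be pinning down precisely where the DES/EHW machinery certifies that these specific quotients are \emph{unitarizable at the reduction point} rather than merely irreducible quotients of reducible generalized Verma modules: one must verify that the Shapovalov/Hermitian form does not degenerate beyond the PRV submodule, i.e., that the socle filtration of $M_\fp(m,\tfrac12(m+2k))$ has exactly two layers with the top layer unitarizable. In rank one this is guaranteed by the DES theory since the reduction point is the endpoint of the unitary interval and the relevant unitarizable module there is exactly the one annihilated by the single PRV-type covariant operator; but writing this cleanly requires carefully citing \cite{DES90, DES91, EHW83} for the $SU(1,2)$ case and translating their normalizations into the $d\chi$, $\omega_1$, $\varpi_i$ conventions of Sections~\ref{sec:Step4} and \ref{sec:main6}. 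The second, more bookkeeping-level difficulty is making the parameter bijection in step (ii) airtight, handling the degenerate $m=0$ and $(m,k)=(0,0)$ cases where ``Cartan component $=$ PRV component'' so that the two families in \eqref{eqn:CPU} overlap only in the way dictated by $\Theta$.
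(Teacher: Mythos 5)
Your overall strategy---cite DES/EHW, use exactness of \eqref{eqn:BGGgeneral3}, and derive \eqref{eqn:CPU2} from \eqref{eqn:CPU} via the $m=0$ resolution---overlaps with the paper's, and your treatment of \eqref{eqn:CPU2} is essentially the argument the paper gives (specialize \eqref{eqn:BGGgeneral3} to $m=0$ and read off $\Ker_{\Cal{O}}(\Cal{P}^{k}_{k+1,1})=\Im_{\Cal{O}}(\Cal{C}^{k+1}_{0,k+1})$). However, there is a genuine gap in your proof of \eqref{eqn:CPU}: you never invoke the one piece of the DES theory that actually does the work. The paper's proof rests entirely on the statement (from \cite{EJ90} and \cite[Thm.~3.2]{DES90}) that the modules in $\widehat{(G_u)}_{h,r}$ are \emph{exactly} the kernels of PRV operators whose \emph{order equals the level of reduction}, combined with the observation that for $SU(1,2)$ the level is always $1$. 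Since $\Cal{P}^{k}_{m+k+1,m+1}$ has order $m+1$, this immediately forces $m=0$ and produces the family $\{\Cal{P}^{k}_{k+1,1}\}$, and the one remaining order-$1$ PRV operator is $\Cal{C}^{1}_{0,1}$ (a PRV operator by Remark~\ref{rem:CPRV}). Your proposal instead says that ``the existence of such a nonzero covariant PRV operator out of $\Cal{O}_{\bar{P}_\C}$ is the defining phenomenon at a reduction point,'' which is false without the order-level matching: every pair $(m,k)\in\Z_{\geq 0}^2$ produces a nonzero PRV operator $\Cal{P}^{k}_{m+k+1,m+1}$ and a resolution \eqref{eqn:BGGgeneral3}, but only the $m=0$ members land at a first reduction point. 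Your later remark ``running $(m,k)$ over $\Z_{\geq 0}^2$ with $m=0$ separately'' and the direction~(ii) plan to match a $\Z_{\geq 0}^2$-indexed family bijectively with $\widehat{(G_u)}_{h,r}$ are symptomatic of this: $\widehat{(G_u)}_{h,r}$ is a $\Z_{\geq 0}$-family (one reduction point per $\fm$-type), so a bijection against all $(m,k)$ cannot exist. The ``split rank one $\Rightarrow$ one reduction point per line'' observation you make is necessary but not sufficient; you need to couple it to the order-one constraint on the PRV operator, which you never do. Your acknowledged ``main obstacle'' (verifying unitarity at the endpoint) is also not an obstacle once one cites \cite[Thm.~3.2]{DES90} directly, which already packages the unitarity statement; the approach of re-deriving it from the Shapovalov form would be considerably more work than the paper's argument.
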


\begin{proof}
It follows from \cite[Thm.\ 3.2]{DES90} and \cite{EJ90} that
the irreducible unitary highest weight modules $\pi \in 
\widehat{(G_u)}_{h,r}$ are classified by the kernel of PRV operators
whose orders are equal to the level of reduction of $\pi$.
In the present case, the level of reduction is always 1 
as there is only one reduction point.
Since the order of the PRV operator $\Cal{P}^{k}_{m+k+1,m+1}$ is $m+1$, 
we have $\Ker_{\Cal{O}}(\Cal{P}^{k}_{k+1,1}) \in \widehat{(G_u)}_{h,r}$
for all $k \in \Z_{\geq 0}$.
As remarked in the beginning of Section \ref{sec:comp},
the operators $\Cal{C}^{k+1}_{0,k+1}$ are also PRV operators,
which implies that $\Ker_{\Cal{O}}(\Cal{C}^{1}_{0,1}) \in  \widehat{(G_u)}_{h,r}$.
Since the right hand side of \eqref{eqn:CPU} exhaust all PRV operators with order 1,
the identity \eqref{eqn:CPU} follows.

The second line \eqref{eqn:CPU2} is the direct consequence of
\eqref{eqn:CPU} and the following resolution:
\begin{equation}\label{eqn:BGGgeneral4}
\begin{aligned}
0\To
V(k \varpi_2)
&\stackrel{\wT}{\To}
\Cal{O}_K(0, k)\\
&\stackrel{\Cal{C}^{k+1}_{0,k+1}}{\To}
\Cal{O}_K(k+1, -\tfrac{1}{2}(3+k))\\[5pt]
&\stackrel{\Cal{P}^{k}_{k+1,1}}{\To}
\Cal{O}_K(k, -\tfrac{1}{2}(6+k))
\To 0.
\end{aligned}
\end{equation}
This concludes the theorem.
\end{proof}


\begin{rem}\label{rem:78}
For $\pi \in \widehat{(G_u)}_{h,r}$, denote by $\pi_K$ the space of $K$-finite vectors of $\pi$.
Then the highest weights of 
$\Ker_{\Cal{O}}(\Cal{C}^{1}_{0,1})_K$ and 
$\Ker_{\Cal{O}}(\Cal{P}^{k}_{k+1,1})_K$ are given as follows.

If $m=0$, then the BGG resolution \eqref{eqn:BGGgeneral2} of generalized Verma modules
reads
\begin{equation*}
\begin{aligned}
0
\To M_\fp(k, -\tfrac{1}{2}(6+k))
&\stackrel{\varphi^{k}_{k+1,1}}{\To}
M_\fp(k+1, -\tfrac{1}{2}(3+k))\\[5pt]
&\stackrel{\varphi^{k+1}_{0,k+1}}{\To}
M_\fp(0, k)\\[5pt]
&\stackrel{\pr}{\To}
V(k\varpi_1)
\To 0.
\end{aligned}
\end{equation*}
By \cite[Thms.\ 2.4 and 3.2]{DES90}, we have 
\begin{align*}
\Ker_{\Cal{O}}(\Cal{C}^{1}_{0,1})_K
&\simeq 
M_\fp(0,0)/\Im (\varphi^1_{0,1}) \simeq \C_{\triv}, \\[3pt]
\Ker_{\Cal{O}}(\Cal{P}^{k}_{k+1,1})_K
&\simeq
M_\fp(k+1,-\tfrac{1}{2}(3+k))/\Im (\varphi^k_{k+1,1}).
\end{align*}
Then, in the coordinates used in Section \ref{sec:Step4}, the following hold.

\begin{enumerate}[(1)]
\item The highest weight of 
$\Ker_{\Cal{O}}(\Cal{C}^{1}_{0,1})_K$ is $(0,0,0)$.
\vskip 0.1in

\item The highest weight of $\Ker_{\Cal{O}}(\Cal{P}^{k}_{k+1,1})_K$ is
\begin{align*}
(k+1)\omega_1-\frac{1}{2}(3+k)d\chi = (-1,1,0)+\frac{k}{3}(-1,2,-1),
\end{align*}
where $\omega_1=\frac{1}{2}(0,1,-1)$ and $d\chi=\frac{1}{3}(2,-1,-1)$.
\end{enumerate}
\end{rem}


\begin{rem}
It follows from \cite{EHW83} that 
a root system $Q$ is associated to the first reduction point.
(Also, see \cite[Table 6.22]{DES90}).
In the present case, there are only two possibilities on $Q$, 
namely, $Q=SU(1,1)$ or $SU(1,2)$.
The sets $\{ \Ker_{\Cal{O}}(\Cal{C}^{1}_{0,1})\}$ 
and $\{ \Ker_{\Cal{O}}(\Cal{P}^{k}_{k+1,1}):k \in \Z_{\geq 0}\}$
correspond to the cases $Q=SU(1,2), SU(1,1)$, respectively. 
Here are some details.

In \cite{DES90}, the highest weights of $\pi_K$
for $\pi \in \widehat{(G_u)}_{h,r}$ are given by 
\begin{equation*}
\lambda_{q'} + C_{q'},
\end{equation*}
which is associated to the root system $Q=SU(1,q')$ for $q'=1,2$
(\cite[Def.\ 6.5 and Prop.\ 6.6]{DES90}).
Here, $\lambda_{q'}$ is 
\begin{equation*}
\lambda_{q'}:=-(3-q')\varpi_1+\varpi_{3-q'}
\end{equation*}
and $C_{q'}$ is an integral cone of $\fk$-dominant integral weights defined by
\begin{equation*}
C_{q'}:=\{k (-\varpi_1 + \varpi_{3-q'}): k \in \Z_{\geq 0}\}
\end{equation*}
(see \cite[Tables 6.21 and 6.22]{DES90} for the full description). 
Now we consider the cases $Q=SU(1,2), SU(1,1)$, separately.

\begin{enumerate}[(1)]

\item $Q=SU(1,2)$: In this case, we have $\lambda_2=(0,0,0)$ and $C_2=\{(0,0,0)\}$. Thus,
the corresponding highest weight is $(0,0,0)$. By Remark \ref{rem:78} (1),
this corresponds to $\Ker_{\Cal{O}}(\Cal{C}^{1}_{0,1})$.

\vskip 0.1in

\item $Q=SU(1,1)$: In this case, we have 
\begin{equation*}
\lambda_1=-2\varpi_1+\varpi_2=(-1,1,0)
\end{equation*}
and 
\begin{equation*}
C_1=\{k(-\varpi_1+\varpi_2):k\in \Z_{\geq 0}\}=\{\tfrac{k}{3}(-1,2,-1):k\in \Z_{\geq 0}\}.
\end{equation*}
So, the highest weights have the form
\begin{equation*}
(-1,1,0)+\tfrac{k}{3}(-1,2,-1),
\end{equation*}
which is the highest weight of $\Ker_{\Cal{O}}(\Cal{P}^{k}_{k+1,1})_K$ by Remark \ref{rem:78} (2).

\end{enumerate}
\end{rem}

\noindent
\textbf{Acknowledgements.}
The first author wishes to thank Boris Doubrov, Dennis The, and Tohru Morimoto 
for fruitful discussions concerning this work.
The authors would also like to express their gratitude to 
Anthony Kable and Toshiyuki Kobayashi for their valuable comments.
Finally, they extend their appreciation to the anonymous referee 
 for a careful review of the manuscript.

The first author was partially supported by JSPS
Grant-in-Aid for Scientific Research(C) (JP22K03362).


\bibliographystyle{amsplain}


\end{document}